\ifpdf\usepackage[pdftex]{hyperref}
\else\usepackage[hypertex]{hyperref}\fi
\theoremstyle{plain}
\newtheorem{thm}{Theorem}[section]
\newtheorem{prop}[thm]{Proposition}
\newtheorem{lemma}[thm]{Lemma}
\newtheorem{cor}[thm]{Corollary}
\theoremstyle{definition}
\newtheorem{defn}[thm]{Definition}
\theoremstyle{remark}
\newtheorem{rem}[thm]{Remark}
\newtheorem{rems}[thm]{Remarks}
\theoremstyle{remark}
\DeclareMathOperator{\tr}{Tr}
\DeclareMathOperator{\id}{id}
\DeclareMathOperator{\spec}{Spec}
\DeclareMathOperator{\ind}{ind}
\DeclareMathOperator{\ch}{ch}
\let\dsp=\displaystyle
\def\R{\mathbb R}
\def\C{\mathbb C}
\def\N{\mathbb N}
\def\Z{\mathbb Z}
\def\c{\mathbf c}
\begin{document}

\title[Spectral invariants on contact manifolds]{Topological and
  dynamical aspects \\of some spectral invariants of contact manifolds
  \\with circle action}

\author{Michel Rumin}
\address{Laboratoire de Mathématiques d'Orsay\\
  Université Paris-Saclay\\
  91405 Orsay Cedex\\ France}

\email{michel.rumin@universite-paris-saclay.fr}

\date{\today}

\begin{abstract}
  We study analytic torsion and eta like invariants on CR contact
  manifolds of any dimension admitting a transverse circle action, and
  equipped with a unitary representation. We show that, when defined
  using the spectrum of relevant operators arising in this geometry,
  the spectral series involved can been interpreted in their whole,
  both from a topological viewpoint, and as purely dynamical functions
  of the Reeb flow.
\end{abstract}

\keywords{analytic torsion, eta invariant, contact complex, Sasakian
  manifold, trace formula}

\subjclass[2020]{58J52, 58J28, 32V05, 32V20, 11M36, 37C30}


\maketitle



\emph{\small Tous les chemins m\`enent \`a Rome.}

\section{Introduction}
\label{sec:introduction}

This paper deals with the study of some spectral series associated to
geometric invariants on particular compact contact CR manifolds
$M$. Those who admit a transverse locally free circle action.

That means the generator $T$ of this action is the Reeb field of an
invariant contact form $\theta$ and preserves an integrable complex
structure $J$ on $H = \ker \theta$. In that case, the orbifold
$N= M/ \mathbb{S}^1$ appears to be a K\"ahler $V$-manifold in the
sense of Satake \cite{Kawasaki}. It is a topological space endowed
with a smooth open dense K\"ahler structure, corresponding to the
generically free orbit, and a finite number of singular conical
points, corresponding to the exceptional fibers.

Independently of this circle action, we also equip $M$ with a unitary
representation $\rho : \pi_1(M) \rightarrow U(d)$. This broadens the
framework to twisted spectral invariants associated to the flat
bundles of these representations, and provides us with dynamical data
using the holonomies induced by the closed orbits of the Reeb flow.

\subsection{Around the contact analytic torsion}
\label{sec:around-cont-analyt}

\ \smallskip

We will be concerned with spectral series associated to two typical
spectral invariants. The first one is the `contact' analytic torsion,
as defined in \cite{Rumin-Seshadri}. This analytic determinant is
associated to the contact de Rham complex $(\mathcal{E}^*, d_Q)$, a
hypoelliptic complex, homotopic to the usual Hodge-de Rham one, but
benefiting from better contact homogeneity when rescaling $\theta$ in
$k \theta$. See Section \ref{sec:contact-complex} for a presentation
of this construction. This resolution of constants starts on functions
with $d_Q = d_H$, the usual differential, but restricted to the
horizontal vectors $H$ in the contact distribution. The price for this
however, is the appearance of a second order differential
$D= d_Q : \mathcal{E}^n \rightarrow \mathcal{E}^{n+1}$ in 'middle
degree', with $\dim N = 2n+1$. In order to preserve homogeneity, this
in turns leads to using \emph{fourth-order} Laplacians
$\mathbf{\Delta}_Q$ in all degrees; see
Section~\ref{sec:middle-degree-case}. These self-adjoint operators are
hypoelliptic. They possess discrete spectrum and smooth heat kernels
on compact contact manifolds.

\smallskip

This allows to consider our first spectral series which is related to
the analytic torsion of the contact complex. In the Riemannian
setting, the analytic torsion was introduced by Ray and Singer in
\cite{RS} as an infinite dimensional analogue of the
Reidemeister-Franz torsion of a finite dimensional complexes. It is
defined by an appropriate combination of analytic determinants of the
Hodge-de Rham Laplacians using their zeta functions.

In \cite{Rumin-Seshadri}, the authors proposed to adapt the
construction on contact manifolds. Starting from heat kernels, one
considers for $t > 0$
\begin{equation}
  \label{eq:1}
  \vartheta(t) = \sum_{k=0}^n (-1)^k (n+1-k) \tr (e^{-t\mathbf{\Delta}_Q}\mid
  \mathcal{E}^k). 
\end{equation}

This particular combination leads to the definition of the analytic
torsion of the contact complex. Briefly, taking Mellin transform leads
to zeta functions
\begin{displaymath}
  \zeta (\mathbf{\Delta}_Q)(s) = \tr^*(\mathbf{\Delta}_Q^{-s})=
  \frac{1}{\Gamma(s)}\int_0^{+\infty} \tr^* (
  e^{-t \mathbf{\Delta}_Q}) t^{s-1}dt \,,
\end{displaymath}
where $\tr^*$ denotes the trace over the non zero spectrum of
$\mathbf{\Delta}_Q$. These functions are well defined for
$\mathrm{Re}(s)$ large and meromorphic with (at worst) simple poles
occurring at
$s \in S = \{\frac{n+1 - j}{2} \mid j \in \N\} \setminus (-\N)$; see
e.g. \cite[Section~3.1]{Rumin-Seshadri} for references.  Following
\cite{Rumin-Seshadri}, we define then the contact torsion zeta
function
\begin{equation}
  \label{eq:2}
  Z(s) = \sum_{k=0}^n (-1)^k (n+1-k) \zeta (\mathbf{\Delta}_Q)(s)\,.
\end{equation}
Then, the analytic torsion of the contact complex is defined by
\begin{equation}
  \label{eq:3}
  T_Q (M,\rho) = \exp \bigl(-\frac{1}{2}Z'(0)\bigr) \,.
\end{equation}
It is shown in \cite{Rumin-Seshadri} that it coincides with Ray-Singer
analytic torsion on three dimensional CR Seifert manifolds. An
explicit formula is given in this case. On general contact manifolds,
Albin and Quan proved in \cite{Albin-Quan} that the Riemannian and
contact analytic torsions differ by integral of (unknown) local terms.

Our first main results relate this $\vartheta$ series to three other
expressions, one using topological data, another to an explicit
geometric sum, and the last one to dynamical properties of the Reeb
flow.

\subsection{The heat analytic torsion as an index series}
\label{sec:heat-analyt-tors}

\ \smallskip

We summarise the main steps toward the topological expression. As we
shall see, it turns out that the spectrum in the combination of trace
in $\vartheta$ is highly symmetric. Much of the plus and minus
contributions cancel each other out, except on a simple residual
spectrum we describe.

Let $\Omega^*H$ denotes the bundle of horizontal forms on $M$ with
coefficients in $V$, the flat bundle associated to the representation
$\rho : \pi_1(M) \rightarrow U(d)$ and consider the horizontal part of
the differential, $d_H$, acting on $\Omega^*H$ and the operator
\begin{displaymath}
  D_H = d_H + \delta_H\,.
\end{displaymath}
It exchanges $\Omega^{ev}H$ and $\Omega^{odd}H$. Let
$\mathcal{H}= \ker D_H$. We will show that
\begin{displaymath}
  \vartheta(t)= \tr(e^{t T^2} \mid \mathcal{H}^{ev}) - \tr(e^{t T^2} \mid
  \mathcal{H}^{odd} ).
\end{displaymath}

As we shall see in Section~\ref{sec:from-spectr-topol}, the space
$\mathcal{H}$ is infinite dimensional, and contains forms build using
CR (holomorphic) functions and their conjugate. We can however split
it into finite dimensional peaces using the spectrum of the Reeb flow
$T$. Fourier decomposition along the circle orbits, gives
\begin{equation}
  \label{eq:4}
  V = \bigoplus_{\lambda \in \spec(iT)} V_\lambda \,,
\end{equation}
where each component $V_\lambda$ can be seen as a $V$-bundle over the
orbifold $N = M / \mathbb{S}^1$. Then the spectral $\vartheta$ finally
reduces to a renormalised index series
\begin{equation}
  \label{eq:5}
  \vartheta(t) = \vartheta^{top}(t)= \sum_{\lambda \in \spec (iT)}
  \ind(D_H^{ev} \mid V_\lambda) 
  e^{-t\lambda^2} \,,
\end{equation}
with $D_H^{ev}=D_H: \Omega^{ev}H\rightarrow \Omega^{odd}H$. The index
terms can be explicitly computed using Kawasaki's index formula for
$V$-manifold; see Section~\ref{sec:index-computations}. This will link
$\vartheta$ to two other expressions, one using explicit geometric
data over $N$ and the other as a dynamical series over all closed
orbits.

\subsection{Geometric and dynamical viewpoints on the heat analytic
  torsion}
\label{sec:dynamical-viewpoint}

\ \smallskip

We now turn to the geometric and dynamical aspects of the series
$\vartheta$. We first would like to express it using data over the
orbifold $N$. In our case, its smooth part corresponds to the generic
closed primitive orbits $f$, and its finite set of singular conical
points are associated to exceptional orbits $f_i$ of order $\alpha_i$.

Recall that from the unitary representation
$\rho : \pi_1(M) \rightarrow U(d)$, each $\gamma \in \pi_1(M)$ induces
an holonomy map $\rho(\gamma)$ and a character value
\begin{displaymath}
  \chi_\rho (\gamma) = \tr (\rho(\gamma))\,.
\end{displaymath}
For any closed orbit we shall also need its algebraic length
$\dsp \ell(\gamma) = \int_\gamma \theta$. Note that since
$i_Td\theta= 0$, this length is always constant through orbit
deformation in contact geometry.

At last, the geometric expression will also relies on some weight on
the character. For $\gamma = f$ or $f_i$, we define
$V^x_\gamma = \ker (\rho(\gamma) -e^{2i\pi x} \id)$ and
\begin{equation}
  \label{eq:6}
  \chi^\theta_\rho(\gamma)(t) = \sum_{e^{2i \pi x} \in \spec{\rho(\gamma)}}
  \dim V^x_\gamma \,\theta(x, 4 \pi^2 t / \ell(\gamma)^2) 
\end{equation}
where
\begin{displaymath}
  \theta(x, t ) = \sum_{n\in \Z} e^{-t (n+x)^2}
\end{displaymath}
is a Jacobi theta function. As we shall see, this weighted character
is related to the average of the holonomies of closed random loops on
the circle $\gamma$.

\smallskip

Starting from the topological series \eqref{eq:5}, we will show the
following explicit geometric expression
\begin{equation}
  \label{eq:7}
  \vartheta = \vartheta^{geo}= \chi(N^*) \chi^\theta_\rho(f) + \sum_{i}
  \chi^\theta_\rho(f_i) \,, 
\end{equation}
where $\chi(N^*)$ is the Euler characteristic of the punctured
manifold $N^* = N \setminus \cup_i \{p_i \}$.

\smallskip

This will eventually lead to a Selberg-type trace formula giving an
expression of $\vartheta$ using data over the free homotopy classes
$\gamma$ of \emph{all} the closed orbits of the Reeb flow together
with their inverse, meaning the closed orbits of the reverse
flow. These are powers of the primitive closed orbits $f$ and
$f_i$. For these classes let
\begin{displaymath}
  e(\gamma) =
  \begin{cases}
    \ell(f) \chi(N) & \ \mathrm{if }\ \gamma \sim f^n \quad n \in \Z \,,\\
    \ell(f_i) & \ \mathrm{if }\ \gamma \sim f_i^n \quad n \not\equiv 0
    \mod \alpha_i\,,
  \end{cases}
\end{displaymath}
where $\dsp \chi(N) = \chi(N^*) + \sum_i \frac{1}{\alpha_i}$ is the
rational Euler class of the orbifold $N= M / \mathbb{S}^1$.

Generalising a result obtained in dimension $3$ in
\cite{Rumin-Seshadri}, we shall prove that
\begin{equation}
  \label{eq:8}
  \vartheta(t) = \vartheta^{dyn}(t) = \frac{1}{\sqrt{4\pi t}}
  \sum_\gamma \chi_\rho(\gamma) e(\gamma)
  e^{-\ell(\gamma)^2/ 4t}\,. 
\end{equation}
where the sum ranges over the homotopy classes of closed orbits of the
flow, together with their inverses and the constant loop.  All trace
formulae given here are invariant under the rescaling
$\theta \mapsto k \theta$ and $t\mapsto k^2 t$, which is specific to
the use of the contact de Rham complex instead of the usual Riemannian
one. It holds without particular assumption on the curvature or
symmetry of the Kähler orbifold $N$.

\smallskip

As we shall see in Section~\ref{sec:zeta-funct-viewp}, these
identities have counterparts using zeta type spectral functions
instead of heat ones.  They will lead in
Corollary~\ref{cor:lefsch-type-form-2} to explicit Lefschetz type and
dynamical formulae for the analytic contact torsion.

\smallskip

We now turn to the second spectral series we will be concerned in.

\subsection{Around the eta invariant in the contact setting.}
\label{sec:around-contact-eta}

\ \smallskip

In Riemannian geometry of dimension $4k-1$, the eta invariant is
defined using the odd signature operator
$$
S= (-1)^k(*d + d*) w
$$
acting (for convenience here) on odd forms, with $*$ the Hodge star
and $w=(-1)^p$ on $\Omega^{2p-1} M$; see \cite[p.~63]{At-Pa-SiI}. This
operator is self-adjoint and $S^2 = \Delta$ is Hodge--de Rham
Laplacian.  The eta invariant is given by the value at $s=0$ of the
meromorphic function
\begin{displaymath}
  \eta(S)(s)= \tr (S |S|^{-2s-1})= 
  \frac{1}{\Gamma(s+1/2)}
  \int_0^{+\infty} \tr( \sqrt  t S e^{-t\Delta}) t^{s-1} dt.
\end{displaymath}
As $S$ maps $\Omega^{2p-1}M $ to
$\Omega^{2(2k-p)-1}M \oplus \Omega^{2(2k-p+1)-1}M$, only forms in
'middle degree' $\Omega^{2k-1}M$ contribute to the trace in $\eta(S)$,
so that
\begin{displaymath}
  \eta (S)(s) =  \frac{1}{\Gamma(s+1/2)} \int_0^{+\infty} \tr (\sqrt t(*d)
  e^{-t\Delta} \mid \Omega^{2k-1}M) t^{s-1} dt\,.
\end{displaymath}

Now, it has been shown that $\eta(S)(0)$ is related to the eta
invariant of its contact second order counterpart $*D$ acting on
$\mathcal{E}^{2k-1}$; see \cite{BHR} for the three dimensional Seifert
case and Albin-Quan's more recent work \cite[\S 6]{Albin-Quan} on
general contact manifolds. The difference is given by the integral of
(unknown) universal curvature polynomial. Although it captures the eta
invariant, the operator $*D$ itself has not good analytic properties,
due to its infinite dimensional kernel. It is not hypoelliptic. It
needs to be completed by some extra term like in the signature
Riemannian operator $S$. Possible choices could be
$P= *D \pm d_Q\delta_Q$ on $\mathcal{E}^{2k-1}$. One has
$P^2 = \mathbf{\Delta}_Q$ and
$\eta (P) = \eta (*D) \pm \zeta(d_Q \delta_Q)$, where the zeta series
of the positive operator $d_Q \delta_Q$ contributes to an alternating
sum of cohomological dimensions up to some local term at $s=0$. In
higher dimension however, these choices of 'extensions' of $*D$ don't
seem to be the most natural ones in terms of spectral symmetry.

\smallskip

We will consider instead the operator defined by
\begin{equation}
  \label{eq:9}
  S_Q =
  \begin{cases}
    *D + (d_Q + \delta_Q) \sigma \delta_Q & \mathrm{on}\
    \mathcal{E}^{2k-1}\\
    (d_Q+\delta_Q)\sigma (d_Q+\delta_Q) & \mathrm{on} \
    \bigoplus_{1\leq p\leq k-1 } \mathcal{E}^{2k-1-2p}
  \end{cases}
\end{equation}
where $\sigma= (-1)^p J$ on $\mathcal{E}^{2p}$ and $J=i^{a-b}$ on
forms $\mathcal{E}^{a,b}$ of bidegre $(a,b)$ with respect to the
complex structure. We shall see in Proposition~\ref{prop:eta-etaS_Q}
that when the CR structure has a transverse symmetry,
$S_Q^2 = \mathbf{\Delta_Q}$ and still
\begin{displaymath}
  \eta(S_Q)= \eta(*D) + \sum \pm \ \zeta(\Delta_Q)
\end{displaymath}
leading again to adding cohomological dimensions and local terms at
$s=0$ . The advantage of this choice of signature operator lies in its
extra symmetry with respect to $\sigma= (-1)^p J$ on
$\mathcal{E}^{2p-1}$. It splits into
\begin{displaymath}
  S_Q = \sigma T + P
\end{displaymath}
here $\sigma P = - P \sigma$ while $\sigma T = T \sigma$ and
$T P = P T$, so that the spectrum of $S_Q$ is symmetric except on (an
infinite dimensional space) $\ker P = \mathcal{H}_S$ on which
$S_Q = \sigma T$.

\subsection{The contact eta trace as topological and dynamical
  series.}
\label{sec:contact-eta-trace}

\ \smallskip

As in the previous case of the analytic torsion, the spectral series
involved in $\eta(S_Q)$ have both closed topological and dynamical
expression. Let
\begin{equation}
  \label{eq:10}
  \vartheta_S (t) = \tr (\sqrt t S_Q e^{-t\mathbf{\Delta}_Q})\,.
\end{equation}
The domain of $S_Q$
\begin{displaymath}
  \mathcal{E}_S = \bigoplus_{1\leq p\leq k} \mathcal{E}^{2p -1}
\end{displaymath}
splits into $\mathcal{E}_S^+ \oplus \mathcal{E}_S^-$ with respect to
the involution $\tau= i\sigma$, as does
$\mathcal{H}_S = \mathcal{H}_S^+ \oplus \mathcal{H}_S^-$. The operator
$P$ exchanges this splitting and we set
$P^+ = P : \mathcal{E}^+_S \rightarrow \mathcal{E}^-_S$. By the
previous discussion $\vartheta_S(t)$ reduces on $\mathcal{H}_S$ and we
have that
\begin{displaymath}
  \vartheta_S(t)
  = - \tr(i\sqrt t Te^{tT^2 }\mid \mathcal{H}_S^+) + \tr(i\sqrt t T
  e^{tT^2} \mid \mathcal{H}_S^-)\,.
\end{displaymath}
Using the same splitting of $V$ through the circle action as
in~\eqref{eq:4}, we will finally get
\begin{displaymath}
  \vartheta_S(t)
  = \vartheta^{top}_S (t)= - \sqrt t \sum_{\lambda \in \spec(iT)} \ind( P^+
  \mid V_\lambda) \lambda e^{-t\lambda^2} \,,
\end{displaymath}
where, as we shall see in Section~\ref{sec:tors-cont-trace}, the index
there is the signature of the (non flat) bundle $V_\lambda$ over the
orbifold $N = M / \mathbb{S}^1$. This is the first interpretation of
$\vartheta_S$ as an index series. This leads to an explicit geometric
expression using Kawasaki's index formula.

\smallskip

We now turn to the link with dynamical data. The objective is to
single out the contribution of each closed orbit like in the analytic
torsion case \eqref{eq:8}. Let
\begin{displaymath}
  \c= c_1(L)= -\dfrac{d\theta}{2\pi}
\end{displaymath}
be the first Chern class of $M$ seen as the circle bundle of a complex
line bundle over $N=M/ \mathbf{S}^1$. Let $\mathcal{L}(N)$ be
Hirzebruch $L$-genus of the (smooth part of the) orbit space $N$.

We shall see in Section~\ref{sec:from-topol-geom} that
\begin{displaymath}
  \vartheta_S(t) = \vartheta_S^{dyn}(t) = \frac{1}{\sqrt {4 \pi}}
  \sum_\gamma \chi_\rho(\gamma) \sigma(\gamma)(t),
\end{displaymath}
where powers of the generic orbit $\gamma= f^n$ contribute to
\begin{displaymath}
  \sigma(\gamma)(t) =
  \frac{i \ell(f)}{2t} \, \bigl\langle (\ell(\gamma) + i \c) e^{-(\ell
    (\gamma) + i \c)^2 /4t} \wedge \mathcal{L}(N), [N_{smooth}]
  \bigr\rangle \,, 
\end{displaymath}
while powers of the singular orbits $\gamma= f_i^k$ with
$k \not\equiv 0 \mod\alpha_i$ contribute to
\begin{displaymath}
  \sigma(\gamma)(t) = \frac{i \ell(f_i)}{2t} \ell(\gamma) e^{-\ell(\gamma)^2/4t}
  \, \nu(\gamma)  
\end{displaymath}
with
\begin{displaymath}
  \nu(\gamma) = i(-1)^k \prod_{j=1}^{2k-1} \cot ( \theta_j/2) \,.
\end{displaymath}
Here $\theta_j$ are the angles of the action of $\gamma$ on the
horizontal space $H$. Following Atiyah-Bott's work \cite{Atiyah-Bott},
$\nu(\gamma)$ arises in the Lefschetz fixed point formula for the
signature operator.

\smallskip

This will eventually lead to explicit expressions of the twisted eta
invariant $\eta(S_Q)(0)$ in terms of these topological and dynamical
data; see Sections~\ref{sec:appl-cont-eta-1} and
\ref{sec:contact-eta-function}.

\section{Review of basic constructions and miscellaneous formulae}
\label{sec:revi-misc-form}

To make the paper as self contained as possible we will start by
discussing the contact de Rham complex because it plays an important
role here. We will also review miscellaneous formulae around it. Much
of this material can be found in other places; see
e.g. \cite{Rumin94,Rumin00, Rumin-Seshadri}

Let $M$ be a smooth manifold of dimension $2n+1$. A $2n$-dimensional
sub-bundle $H \subset TM$ is a \emph{contact distribution} if a
$1$-form $\theta$ such that $H = \ker \theta$ satisfies the non
integrability condition $\theta \wedge d\theta^n \not=0$. Such a form
is called a \emph{contact form}. Associated to a choice of $\theta$ is
the transverse \emph{Reeb field} $T$; it is the unique vector field
satisfying $\theta(T)=1$ and $\mathcal{L}_T \theta = i_T d \theta= 0$,
where $\mathcal{L}_T$ is Lie derivative along $T$.

The exterior algebra of $M$ splits into horizontal and vertical forms
\begin{displaymath}
  \Omega^*M = \Omega^* H \oplus \theta \wedge \Omega^* H
\end{displaymath}
where $\Omega^* H $ are forms vanishing on $T$. The exterior
differential $d$ on $\Omega^* M$ writes then
\begin{displaymath}
  d (\alpha_H + \theta \wedge \alpha_T) = (d_H \alpha_H + d \theta\wedge
  \alpha_T) + 
  \theta \wedge (T \alpha_H - d_H \alpha_T)
\end{displaymath}
using the notation $T=\mathcal{L}_T$ on forms, that is in matrix form
\begin{displaymath}
  d =
  \begin{pmatrix}
    d_H & L\\
    T & - d_H
  \end{pmatrix},
\end{displaymath}
where $d_H=\Pi_{\Omega^*H}d$ is the horizontal part of $d$ (that skips
the differential along $T$), and $L \alpha = d\theta \wedge
\alpha$. From $d^2= 0$, one gets
\begin{equation}
  \label{eq:11}
  d_H^2 = - L T\ ,\quad [L, T]= 0 = [L, d_H]\,. 
\end{equation}
Note that $d_H$ is not a complex, and moreover that the splitting of
$\Omega^*M$ and $d_H$ depends on the choice of a contact form
$\theta$. It is possible to construct another sequence of operators
that avoid this.

\subsection{The contact complex.}
\label{sec:contact-complex}

\ \smallskip

Let $\mathcal{I}^* $ be the ideal in $\Omega^*M$ generated by $\theta$
and $d\theta$
$$
\mathcal{I}^*= \{ \alpha \in \Omega^* M \mid \alpha = \theta \wedge
\beta + d \theta \wedge \gamma\}\,,
$$
and $\mathcal{J}^*$ its annihilator
$$
\mathcal{J}^*=\{ \alpha \in \Omega^*M \mid \theta \wedge \alpha = d
\theta \wedge \alpha = 0\}\,.
$$
They are independent on the choice of contact form and stable under
$d$. From e.g. \cite{Weil}, $L$ is injective on $\Omega^k H$ for
$k \leq n-1$ and surjective onto $\Omega^k H$ for $k\geq n+1$. Hence
$\mathcal{I}^k = \Omega^k M$ for $k \geq n+1$ and $\mathcal{J}^k = 0$
for $k \leq n+1$. Then de Rham exterior differential induces two
Quotiented complexes
\begin{displaymath}
  \Omega^0M\stackrel{d_Q}{\longrightarrow}\Omega^1M/
  \mathcal{I}^1\stackrel{d_Q}{\longrightarrow}
  \cdots\stackrel{d_Q}{\longrightarrow}\Omega^nM/\mathcal{I}^n 
\end{displaymath}
and
\begin{displaymath}
  \mathcal{J}^{n+1}\stackrel{d_Q}{\longrightarrow}
  \mathcal{J}^{n+2}\stackrel{d_Q}{\longrightarrow}
  \cdots\stackrel{d_Q}{\longrightarrow}\mathcal{J}^{2n+1}.
\end{displaymath}
These can be joined using the following:
\begin{lemma}\cite[p.~286]{Rumin94}
  Let $\alpha \in \Omega^n M / \{ \mathrm{vertical\ forms}\}$. Then
  there exists a unique lift $\overline \alpha $ of $\alpha$ in
  $\Omega^n M$ such that $d \overline \alpha \in
  \mathcal{J}^{n+1}$. Moreover $d \overline \alpha= 0$ if
  $\alpha = d \theta \wedge \beta$.
\end{lemma}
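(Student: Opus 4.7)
My plan is to parametrise lifts, reduce the $\mathcal{J}^{n+1}$-condition to a single equation on the vertical coefficient, and invoke the Lefschetz-type isomorphism for $L=d\theta\wedge\cdot$.

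\emph{Step 1: parametrise lifts.} Pick the horizontal representative $\alpha_H\in\Omega^nH$ of $\alpha$. Any lift is $\overline\alpha=\alpha_H+\theta\wedge\gamma$ with $\gamma\in\Omega^{n-1}H$. Using the matrix form recalled just before the lemma,
\begin{displaymath}
d\overline\alpha=(d_H\alpha_H+L\gamma)+\theta\wedge(T\alpha_H-d_H\gamma).
\end{displaymath}

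\emph{Step 2: describe $\mathcal{J}^{n+1}$.} Write any $\beta\in\Omega^{n+1}M$ as $\beta_H+\theta\wedge\beta_V$ with $\beta_H\in\Omega^{n+1}H,\,\beta_V\in\Omega^nH$. Then $\theta\wedge\beta=0$ forces $\beta_H=0$ (since $\theta\wedge\cdot$ is injective on horizontal forms), and subsequently $d\theta\wedge\beta=-\theta\wedge L\beta_V=0$ forces $L\beta_V=0$. So $\mathcal{J}^{n+1}=\theta\wedge\ker\bigl(L:\Omega^nH\to\Omega^{n+2}H\bigr)$.

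\emph{Step 3: solve the horizontal equation.} Asking $d\overline\alpha\in\mathcal{J}^{n+1}$ therefore amounts to the two conditions
\begin{displaymath}
(\mathrm{i})\ L\gamma=-d_H\alpha_H,\qquad (\mathrm{ii})\ L(T\alpha_H-d_H\gamma)=0.
\end{displaymath}
By Weil's result quoted in the text, $L:\Omega^{n-1}H\to\Omega^{n+1}H$ is both injective (since $n-1\leq n-1$) and surjective (since $n+1\geq n+1$), hence an isomorphism. So (i) admits a unique solution $\gamma\in\Omega^{n-1}H$; this gives uniqueness and existence of $\overline\alpha$ \emph{provided} (ii) holds.

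\emph{Step 4: (ii) is automatic.} This is the main check. With $\gamma$ as in (i), $d\overline\alpha=\theta\wedge B$ where $B=T\alpha_H-d_H\gamma\in\Omega^nH$. Applying $d$ once more and using $d^2=0$,
\begin{displaymath}
0=d(\theta\wedge B)=d\theta\wedge B-\theta\wedge dB=LB-\theta\wedge d_HB,
\end{displaymath}
where the decomposition into purely horizontal and $\theta\wedge$-horizontal parts is direct. Hence $LB=0$, i.e.\ (ii).

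\emph{Step 5: the ``moreover''.} If $\alpha=d\theta\wedge\beta$ with $\beta\in\Omega^{n-2}H$, then $\alpha_H=L\beta$ and, using $[L,d_H]=0$ from \eqref{eq:11}, (i) becomes $L\gamma=-L\,d_H\beta$. Injectivity of $L$ on $\Omega^{n-1}H$ gives $\gamma=-d_H\beta$. Then, using the other identity $d_H^2=-LT$ and $[L,T]=0$,
\begin{displaymath}
B=T(L\beta)+d_H^2\beta=LT\beta-LT\beta=0,
\end{displaymath}
so $d\overline\alpha=0$.

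The only genuinely delicate point is Step 4; everything else is linear algebra on horizontal forms together with the Lefschetz isomorphism. Once one notices that $d^2\overline\alpha=0$ forces the residual vertical coefficient to be primitive for free, the proof collapses to checking bijectivity of a single Lefschetz map.
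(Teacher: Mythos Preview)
Your proof is correct. The paper itself does not prove this lemma but only cites \cite{Rumin94}; nonetheless, the explicit formula~\eqref{eq:12} given just after the statement, namely $\overline\alpha=\alpha_H-\theta\wedge L^{-1}d_H\alpha_H$, is exactly the lift your Steps~1--3 produce, and your verification in Steps~4--5 that $d\overline\alpha\in\mathcal{J}^{n+1}$ and vanishes on $d\theta\wedge\beta$ is the standard argument.
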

One defines then
$D : \Omega^nM/\mathcal{I}^n \rightarrow \mathcal{J}^{n+1}$ by
$D\alpha = d \overline \alpha$. Note that $D$ is a \emph{second order}
operator, taking $T$ as a second order one in our contact setting by
\eqref{eq:11}. Given a choice of contact form, the formula for $D$
reads
\begin{equation}
  \label{eq:12}
  D \alpha = d (\alpha_H - \theta \wedge L^{-1}d_H \alpha_H) = \theta
  \wedge (T +  d_H L^{-1}d_H ) \alpha_H \,,
\end{equation}
if $\alpha_H $ is the representative of $\alpha$ in $\Omega^n H$.  The
so-called contact complex is then
\begin{displaymath}
  \Omega^0M\stackrel{d_Q}{\longrightarrow}\Omega^1M/
  \mathcal{I}^1\stackrel{d_Q}{\longrightarrow}
  \cdots\stackrel{d_Q}{\longrightarrow}\Omega^nM/\mathcal{I}^n 
  \stackrel{D}{\longrightarrow}
  \mathcal{J}^{n+1}\stackrel{d_Q}{\longrightarrow}
  \mathcal{J}^{n+2}\stackrel{d_Q}{\longrightarrow}
  \cdots\stackrel{d_Q}{\longrightarrow}\mathcal{J}^{2n+1}.
\end{displaymath}
We have :
\begin{prop}\cite[p.~286]{Rumin94}
  \label{prop:contact-complex-1}
  The contact complex is a resolution of the constant sheaf and hence
  its cohomology coincides with de Rham cohomology of $M$. Moreover
  the canonical projections
  $\pi : \Omega^k M \rightarrow \Omega^k M / \mathcal{I}^k$ for
  $k \leq n$ and injections $i : \mathcal{J}^k \rightarrow \Omega^k M$
  for $k\geq n+1$ induce isomorphism between the two cohomologies.
\end{prop}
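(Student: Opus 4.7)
The proposition asserts three things: the displayed sequence is a complex, it is locally exact (hence resolves the constant sheaf), and the chain maps $\pi$ and $i$ induce the isomorphism between its cohomology and $H^*_{dR}(M)$. My plan treats these in turn.

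First, I verify the complex property. Because $d\theta \in \mathcal{I}^*$ and $d^2 = 0$, the ideal $\mathcal{I}^*$ is differential, so $d$ descends to $d_Q$ on $\Omega^*/\mathcal{I}^*$ with $d_Q^2 = 0$. Dually, if $\alpha \in \mathcal{J}^*$ then $\theta \wedge d\alpha = d\theta \wedge \alpha - d(\theta \wedge \alpha) = 0$ and $d\theta \wedge d\alpha = -d(d\theta \wedge \alpha) = 0$, so $d$ preserves $\mathcal{J}^*$. The junction identities $D \circ d_Q = 0$ and $d_Q \circ D = 0$ follow from the uniqueness of the canonical lift in the preceding lemma: for $\beta \in \Omega^{n-1}M$, the form $d\beta$ lifts $d_Q \pi \beta$ and satisfies $d(d\beta) = 0 \in \mathcal{J}^{n+1}$, hence $d\beta$ must be the canonical lift and $D d_Q \pi \beta = 0$; and $D$ takes values in globally exact forms, so $d_Q D = 0$.

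Second, I prove local exactness by working in a Darboux chart where $M$ is contractible. The Lefschetz properties recalled in the excerpt let me identify $\Omega^k M / \mathcal{I}^k$ with the primitive horizontal forms $\Omega^k_0 H$ for $k \leq n$, and $\mathcal{J}^k$ with $\theta \wedge \Omega^{k-1}_0 H$ for $k \geq n+1$. Combining the Poincar\'e homotopy for $d_H$, the inverse $L^{-1}$ on non-primitive components, and integration along the Reeb field, I build a local contracting homotopy for the contact complex. This shows it is a fine resolution of the constant sheaf $\mathbb R_M$, hence its cohomology equals $H^*(M,\mathbb R) = H^*_{dR}(M)$.

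Third, I identify the comparison isomorphism. The projection $\pi$ is a chain map between the truncations in degrees $\leq n-1$ by construction ($\pi d = d_Q \pi$ follows from $d \mathcal{I}^* \subset \mathcal{I}^*$), and at degree $n$ it descends to cohomology because a $d$-closed $\alpha \in \Omega^n M$ is its own canonical lift (by uniqueness, since $d\alpha = 0 \in \mathcal{J}^{n+1}$), so $D \pi \alpha = d\alpha = 0$. Dually, $i : \mathcal{J}^* \hookrightarrow \Omega^*$ is tautologically a chain map in degrees $\geq n+1$, and any element $D[\alpha] \in \mathcal{J}^{n+1}$ is of the form $d\bar\alpha$, hence $d$-exact in $\Omega^{n+1}M$. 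Both $\pi$ and $i$ thus define augmentation-preserving morphisms between fine resolutions of $\mathbb R_M$; general homological principles then force them to induce the canonical isomorphism on sheaf cohomology. The main technical obstacle is the local exactness at the junction degrees $n$ and $n+1$, where the two halves of the complex are glued by the second-order operator $D$: one must show that any locally $d_Q$-closed element of $\mathcal{J}^{n+1}$ lies in $\mathrm{im}\, D$, a Hodge-type problem that combines the horizontal Poincar\'e lemma with the invertibility of $L$ on the relevant primitive components.
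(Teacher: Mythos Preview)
The paper does not actually prove this proposition: it is quoted from \cite[p.~286]{Rumin94}, and the only comment offered is that ``the arguments being purely local'' the result extends to twisted coefficients. So there is no in-paper proof to compare to, only the implicit indication that the argument is a local (Poincar\'e-lemma type) one. Your overall architecture --- verify the complex property, prove local exactness on a Darboux chart, then invoke that two fine resolutions of $\mathbb R_M$ have canonically isomorphic cohomology --- is exactly the right shape and matches that indication. Your checks that $\mathcal I^*$ is a differential ideal, that $\mathcal J^*$ is $d$-stable, and that $\pi$ and $i$ are chain maps compatible with $D$ are correct.

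The genuine gap is in your local-exactness step. You invoke a ``Poincar\'e homotopy for $d_H$'' and later a ``horizontal Poincar\'e lemma'', but $d_H$ is \emph{not} a complex: equation~\eqref{eq:11} of the paper reads $d_H^2 = -LT$, which is a nonzero second-order operator. There is therefore no Poincar\'e homotopy for $d_H$ to combine with anything. The actual argument in \cite{Rumin94} does not pass through $d_H$; it builds an explicit chain homotopy equivalence between the de Rham complex and the contact complex using the Lefschetz decomposition (the operators $L$, $\Lambda$ and the projection onto primitives), i.e.\ it transfers the ordinary de Rham Poincar\'e lemma through these algebraic identifications rather than proving a new one for $d_H$. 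Equivalently, one shows directly that $(\mathcal I^*,d)$ is acyclic in degrees $\le n$ and $(\Omega^*/\mathcal J^*,d)$ is acyclic in degrees $\ge n+1$, which are purely algebraic statements on a Darboux chart. Your closing sentence correctly locates the difficulty at the junction degrees $n$ and $n+1$, but calling it a ``Hodge-type problem'' is misleading: no analysis is needed, only the invertibility of $L$ on the appropriate Lefschetz components and the usual de Rham Poincar\'e lemma.
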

The arguments being purely local, these results also apply on twisted
version of the complexes with a flat bundle $V$, as coming from a
representation $\rho : \pi_1(M) \rightarrow U(d)$.

\medskip

Using a complex structure $J$ on $H$ such that
$d \theta(\cdot, J\cdot) $ is Hermitian positive definite, one defines
a Riemannian metric on $M$
\begin{displaymath}
  g = d \theta (\cdot, J \cdot) + \theta^2\,.
\end{displaymath}
Let then $\Lambda = L^* $ be the adjoint of
$L : \Omega^k H \rightarrow \Omega^{k+2 } H$ where
$L\alpha= d\theta \wedge \alpha$, and $\Omega^*_0H = \ker \Lambda$ be
the bundle of primitive horizontal forms.  We will identify in the
sequel the quotient spaces $\Omega^k M / \mathcal{I}^k$ in the
lower-half of the contact complex with $\Omega^k_0 H$. Let
\begin{equation}
  \label{eq:13}
  \mathcal{E}^k =
  \begin{cases}
    \Omega^k_0  H & \mathrm{if}\ k \leq n\\
    \mathcal{J}^k & \mathrm{if}\ k \geq n+1.
  \end{cases}
\end{equation}
be the definition spaces of the contact complex in this
identification. Note that Hodge star operator $*$ exchanges
$\mathcal{E}^k$ and $\mathcal{E}^{2n+1-k}$.

\subsection{Miscellaneous formulae.}
\label{sec:misc-form}

\ \smallskip

We gather now some useful identities; see e.g. \cite[Section
4]{Rumin00} for more details. The first ones are similar to basic
formulae from Kählerian geometry, see \cite{Weil}. At the algebraic
level, it holds on the Hermitian space $H$ that
\begin{equation}
  \label{eq:14}
  [\Lambda, L] = n-p \quad \mathrm{on} \ \Omega^pH\,.
\end{equation}
Moreover, following \cite[Thm.~3]{Weil} for instance, $\Omega^*H$
splits under the Lefschetz decomposition
\begin{equation}
  \label{eq:15}
  \Omega^* H = \bigoplus_{0\leq k \leq q \leq n} L^k \Omega^{n-q}_0H =
  \bigoplus_{0\leq k \leq q \leq n} L^k \mathcal{E}^{n-q}   
  \,.
\end{equation}

At the level of first order operators one has
\begin{equation}
  \label{eq:16}
  [\Lambda, d_H] = -\delta_H^J \,.
\end{equation}
where $\delta_H^J = J^{-1} \delta_H J$ and
$J\alpha (X_1, \cdots, X_p) = \alpha(J X_1, \cdots, J X_p)$ on
$\Omega^p H$.

This leads to the action of $d_H$ with respect to Lefschetz
decomposition. Thanks to \eqref{eq:14} and \eqref{eq:16}, it holds on
$\Omega^p_0H$ that
\begin{equation}
  \label{eq:17}
  d_H = d_Q - \frac{L}{n-p+1}\delta_Q^J\,,
\end{equation}
with the convention here that $d_Q=0$ on $\Omega^n_0H$. This extends
on $L^k\Omega^p_0H$ using $[L,d_H]=0$ by \eqref{eq:11}.

We now come to second order relations on the contact complex. From
\eqref{eq:11} \eqref{eq:12} and \eqref{eq:17}, one gets:
\begin{equation}
  \label{eq:18}
  T =
  \begin{cases}
    \frac{1}{n-p} \delta_Q^Jd_Q + \frac{1}{n-p+1} d_Q \delta_Q^J & \
    \mathrm{on}\ \mathcal{E}^p \ \ \mathrm{for}\ p < n\\
    i_T D + d_Q \delta_Q^J & \ \mathrm{on} \ \mathcal{E}^n \,.
  \end{cases}
\end{equation}
In order to get rid of the multiplicative coefficients in formulae as
above, we will normalise the differentials $d_Q$ as in
\cite[p.~418]{Rumin00}. Namely on $\mathcal{E}^p$ for $p < n$, we
shall use from now on
\begin{equation}
  \label{eq:19}
  \frac{1}{\sqrt{n-p} } d_Q \quad \mathrm{instead \ of }\quad d_Q\,.
\end{equation}
We will keep the same notation $d_Q$ for this normalised differential
in the sequel since we will only use them. Hence \eqref{eq:18} reads
now
\begin{equation}
  \label{eq:20}
  T =
  \begin{cases}
    \delta_Q^Jd_Q + d_Q \delta_Q^J & \
    \mathrm{on}\ \mathcal{E}^p \ \ \mathrm{for}\ p < n\\
    i_T D + d_Q \delta_Q^J & \ \mathrm{on} \ \mathcal{E}^n \,.
  \end{cases}
\end{equation}

Using this and $L = L^J$, $d\theta $ being a $(1,1)$ form, one deduces
that on $\Omega^*H$
\begin{equation}
  \label{eq:21}
  T^* = - T^J \quad \mathrm{and}\quad
  [\Lambda,T]= 0\,.
\end{equation}

\smallskip

So far, all identities here hold for any calibrated complex structure
$J$, i.e. satisfying that $d\theta(\cdot, J \cdot)$ is positive
Hermitian.  In the sequel, we will assume moreover that $J$ is
\emph{integrable}, meaning that $[H^{1,0}, H^{1,0}] \subset
H^{1,0}$. In that case both $d_H$ and $d_Q$ split into two components
\begin{equation}
  \label{eq:22}
  d_H = \partial_H + \overline\partial_H \quad \mathrm{and}\quad d_Q =
  \partial_Q + \overline\partial_Q\,,
\end{equation}
where $\partial_{H,Q}$ increases the bidegree by $(1,0)$ and
$\overline\partial_{H,Q}$ by $(0,1)$. Developing $d_Q^2=0$ on
$\mathcal{E}^p$ for $p\leq n-2$, first gives
\begin{equation}
  \label{eq:23}
  \partial_Q^2 = \overline\partial_Q^2 = 0= \partial_Q
  \overline\partial_Q + \overline\partial_Q \partial_Q = d_Q d_Q^J +
  d_Q^J d_Q\,.
\end{equation}

We can also get other second order relations between the
$Q$-differentials by developing \eqref{eq:20} on $\mathcal{E}^p$ with
$p< n$.  This gives
\begin{equation}
  \label{eq:24}
  \left\{
    \begin{aligned}
      \Delta_{\overline\partial_Q } - \Delta_{\partial_Q}
      & = i T^{0,0}\\
      \overline\partial_Q^* \partial_Q + \partial_Q
      \overline\partial_Q^* & = i T^{1,-1}\\
      \partial_Q^*\overline\partial_Q + \overline\partial_Q
      \partial_Q^* & = -i T^{-1,1}\\
    \end{aligned}
  \right.
\end{equation}
where
\begin{displaymath}
  \Delta_{\overline\partial_Q} = \overline\partial_Q^*
  \overline\partial_Q + \overline\partial_Q \overline\partial_Q^*
  \quad \mathrm{and}\quad
  \Delta_{\partial_Q} = \partial_Q^* \partial_Q + \partial_Q \partial_Q^*\,.
\end{displaymath}
At this point we note that
\begin{displaymath}
  T - T^J = - J^{-1}(\mathcal{L}_TJ) = (1+i)
  T^{1,-1}+ (1-i) T^{-1,1} 
\end{displaymath}
is a zero order algebraic operator that vanishes when the Reeb flow
preserves the complex structure, thus the metric. In conclusion we
have the following:
\begin{prop}\cite[p.~418]{Rumin00}
  \label{prop:Laplacian-zero-torsion}
  Suppose that the complex structure on a CR contact manifold is
  integrable and preserved by the Reeb flow.

  Then it holds on $\mathcal{E}^p$ for $p< n$ that the second order
  $Q$-Laplacian $ \Delta_Q= d_Q \delta_Q + \delta_Q d_Q$ commutes with
  $J$ and writes
  \begin{equation}
    \label{eq:25}
    \Delta_Q = \Delta_{\partial_Q}+ \Delta_{\overline\partial_Q} 
    \quad \mathrm{with}\quad 
    \Delta_{\overline\partial_Q } - \Delta_{\partial_Q}
    = i T\,.
  \end{equation}
\end{prop}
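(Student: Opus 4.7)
The plan is to read off the proposition from equations \eqref{eq:23}, \eqref{eq:24}, and the algebraic identity just above the statement that relates $T - T^J$ to the off-diagonal pieces of $T$ in bidegree. The assumption that the Reeb flow preserves $J$ means $\mathcal{L}_T J = 0$, equivalently $T = T^J$. Combined with
\begin{displaymath}
  T - T^J = (1+i) T^{1,-1} + (1-i) T^{-1,1}\,,
\end{displaymath}
and the fact that $T^{1,-1}$ and $T^{-1,1}$ live in distinct bidegree components (hence are linearly independent), I conclude that $T^{1,-1} = T^{-1,1} = 0$ and $T = T^{0,0}$.

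First I would plug this vanishing into the last two lines of \eqref{eq:24}, obtaining
\begin{displaymath}
  \overline\partial_Q^*\partial_Q + \partial_Q \overline\partial_Q^* = 0
  \quad\mathrm{and}\quad
  \partial_Q^*\overline\partial_Q + \overline\partial_Q\partial_Q^* = 0
\end{displaymath}
on $\mathcal{E}^p$ for $p < n$. Next I would expand, using $d_Q = \partial_Q + \overline\partial_Q$ and $\delta_Q = \partial_Q^* + \overline\partial_Q^*$,
\begin{displaymath}
  \Delta_Q = d_Q\delta_Q + \delta_Q d_Q = \Delta_{\partial_Q} +
  \Delta_{\overline\partial_Q} + \bigl(\partial_Q\overline\partial_Q^* +
  \overline\partial_Q^*\partial_Q\bigr) + \bigl(\overline\partial_Q\partial_Q^* +
  \partial_Q^*\overline\partial_Q\bigr)\,,
\end{displaymath}
where the grouping uses $\partial_Q^2 = \overline\partial_Q^2 = 0$ from \eqref{eq:23}. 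The two cross terms vanish by the previous step, yielding $\Delta_Q = \Delta_{\partial_Q} + \Delta_{\overline\partial_Q}$. The identity $\Delta_{\overline\partial_Q} - \Delta_{\partial_Q} = iT$ is then just the first line of \eqref{eq:24} with $T^{0,0} = T$.

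Finally, for the commutation $[\Delta_Q, J] = 0$, I would observe that $\partial_Q$ is of bidegree $(1,0)$ and $\partial_Q^*$ of bidegree $(-1,0)$, so $\Delta_{\partial_Q}$ preserves each bidegree component $\mathcal{E}^{a,b}$, and likewise $\Delta_{\overline\partial_Q}$. Since $J$ acts as the scalar $i^{a-b}$ on $\mathcal{E}^{a,b}$, both summands commute with $J$, hence so does $\Delta_Q$. There is no real obstacle here: the proof is a direct assembly of formulae already established, and the only delicate point is the bidegree bookkeeping used to extract $T^{1,-1} = T^{-1,1} = 0$ from the vanishing of $T - T^J$.
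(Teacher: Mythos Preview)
Your proof is correct and follows exactly the route the paper intends: the proposition is stated as the conclusion of the discussion containing \eqref{eq:23}, \eqref{eq:24}, and the identity $T - T^J = (1+i)T^{1,-1} + (1-i)T^{-1,1}$, and you have simply spelled out the assembly of these pieces. One small remark: your parenthetical ``where the grouping uses $\partial_Q^2 = \overline\partial_Q^2 = 0$'' is unnecessary, as the expansion of $d_Q\delta_Q + \delta_Q d_Q$ into the four displayed terms is purely algebraic and does not require \eqref{eq:23}; but this does not affect the argument.
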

\begin{rem}
  We recall that these results hold with the renormalised
  differentials as defined in \eqref{eq:19}.
\end{rem}

\subsection{The middle degree case.}
\label{sec:middle-degree-case}

\ \smallskip

At this point we still miss a $Q$-Laplacian in middle degree. The
differential $D : \mathcal{E}^n\rightarrow \mathcal{E}^{n+1}$ here is
second order. Hence a starting expression for a positive Laplacian is
the fourth order $D^* D$. A natural way to complete it, is to set on
$\mathcal{E}^n$
\begin{equation}
  \label{eq:26}
  \mathbf{\Delta}_Q = (d_Q\delta_Q)^2 + D^*D \,.
\end{equation}
An important feature of this choice lies in its commuting property
with $J$, as in the lower degree case.
\begin{prop}\cite[Prop.~4]{Rumin94}
  \label{prop:middle-degree-case-1}
  $\mathbf{\Delta_Q}$ preserves the bidegree of forms in
  $\mathcal{E}^n$ when the complex structure is integrable and
  invariant through the Reeb flow.
\end{prop}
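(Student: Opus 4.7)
The plan is to show that each of the two summands in $\mathbf{\Delta}_Q = (d_Q \delta_Q)^2 + D^*D$ commutes with the bidegree-counting action of $J$ on $\mathcal{E}^n$; since a bidegree-preserving endomorphism of $\mathcal{E}^n$ is precisely one that commutes with $J = i^{a-b}$ on pieces of fixed total degree $n$, this will establish the claim.

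For the piece $D^*D$, I would start from the explicit formula \eqref{eq:12}, writing $D = \theta \wedge D_0$ on $\mathcal{E}^n$ with $D_0 = T + d_H L^{-1} d_H$, and then observe that $(\theta\wedge)^*(\theta\wedge) = \mathrm{id}$ on horizontal forms, so $D^*D = D_0^* D_0$. It therefore suffices to check that $D_0$ itself is bidegree-preserving. The term $T$ is bidegree-preserving by the Reeb-symmetry assumption. For the term $d_H L^{-1} d_H$, the integrability of $J$ gives $d_H = \partial_H + \overline\partial_H$ with $\partial_H^2 = \overline\partial_H^2 = 0$ (by matching bidegree components in $d_H^2 = -LT$), while $[L, d_H] = 0$ from \eqref{eq:11} yields $[L^{-1}, \partial_H] = [L^{-1}, \overline\partial_H]=0$ on the relevant spaces. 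Hence the two bidegree-shifting components vanish, $\partial_H L^{-1} \partial_H = L^{-1} \partial_H^2 = 0$ and similarly $\overline\partial_H L^{-1}\overline\partial_H = 0$, leaving only $\partial_H L^{-1} \overline\partial_H + \overline\partial_H L^{-1}\partial_H$, which has bidegree shift $(0,0)$. Thus $D_0$, and therefore $D^*D$, commutes with $J$.

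For the piece $(d_Q\delta_Q)^2$, the strategy is to combine the middle-degree identity \eqref{eq:20} with a degree-parity argument. From \eqref{eq:20} one has $d_Q \delta_Q^J = T - i_T D = T - D_0$ on $\mathcal{E}^n$, and both $T$ and $D_0$ are bidegree-preserving by the previous step, so $d_Q \delta_Q^J$ preserves bidegree. A direct computation using $J = i^{a-b}$ gives $\delta_Q^J = i(\partial_Q^* - \overline\partial_Q^*)$, so that
\begin{displaymath}
  d_Q \delta_Q^J = i\bigl(\partial_Q \partial_Q^* - \overline\partial_Q \overline\partial_Q^*\bigr) + i\bigl(-\partial_Q \overline\partial_Q^* + \overline\partial_Q \partial_Q^*\bigr)
\end{displaymath}
on $\mathcal{E}^n$. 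The first bracket has bidegree shift $(0,0)$, while the two terms in the second bracket have disjoint bidegree shifts $(1,-1)$ and $(-1,1)$. The fact that their sum is bidegree-preserving then forces each to vanish, giving the key identities $\partial_Q \overline\partial_Q^* = 0$ and $\overline\partial_Q \partial_Q^* = 0$ on $\mathcal{E}^n$. Plugging into $d_Q\delta_Q = \partial_Q\partial_Q^* + \overline\partial_Q\overline\partial_Q^* + \partial_Q \overline\partial_Q^* + \overline\partial_Q \partial_Q^*$, only the bidegree-preserving diagonal part survives, and $(d_Q\delta_Q)^2$ preserves bidegree.

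The main obstacle is the algebraic manipulation inside Step 2 showing $\partial_H L^{-1}\partial_H = 0$: one must pass $L^{-1}$ through $\partial_H$ on forms that are not themselves in the injectivity range of $L$ (the image $\partial_H \alpha$ sits in $\Omega^{n+1}H$, where $L$ is only surjective). This step relies crucially on the hard-Lefschetz-type invertibility $L: \Omega^{n-1}H \to \Omega^{n+1}H$ together with the commutation $[L,\partial_H]=0$; once this conceptual point is secured the rest of the argument is bookkeeping on bidegree components.
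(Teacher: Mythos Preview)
Your plan hinges on the claim that $D_0 = i_T D$ is bidegree-preserving on $\mathcal{E}^n$, and specifically on the vanishing $\partial_H L^{-1}\partial_H = 0$ there. You flag this as the main obstacle, and rightly so: the argument you sketch does not go through. From $[L,\partial_H]=0$ and $\partial_H^2=0$ you only get $L\bigl(\partial_H L^{-1}\partial_H\alpha\bigr)=0$ for $\alpha\in\mathcal{E}^n$, but $\partial_H L^{-1}\partial_H\alpha$ lives in $\Omega^n H$, where $L$ is \emph{not} injective --- its kernel is all of $\Omega^n_0 H=\mathcal{E}^n$. So the identity tells you only that $\partial_H L^{-1}\partial_H$ lands in $\mathcal{E}^n$, not that it vanishes. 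In fact, rewriting $D_0 = T - d_Q\delta_Q^J$ via \eqref{eq:20} and expanding in bidegree components shows that $D_0$ preserves bidegree if and only if $\partial_Q\overline\partial_Q^* = 0 = \overline\partial_Q\partial_Q^*$ on $\mathcal{E}^n$; a principal-symbol check (already for $n=1$) shows these second-order operators are nonzero. Consequently your secondary step --- deducing that $d_Q\delta_Q$ preserves bidegree from $d_Q\delta_Q^J = T - D_0$ --- collapses with it.

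The paper's proof is structurally different and does \emph{not} assert that either summand preserves bidegree on its own. Instead, it expands $D^*D = (i_TD)^*(i_TD)$ and shows that, modulo bidegree-preserving terms, what remains is precisely $(d_Q^J\delta_Q^J)^2$. Adding $(d_Q\delta_Q)^2$ then gives the manifestly $J$-invariant combination $(d_Q\delta_Q)^2 + (d_Q^J\delta_Q^J)^2$, which is what makes $\mathbf{\Delta}_Q$ commute with $J$; the residual $(2,-2)$ component is then killed separately using \eqref{eq:23}--\eqref{eq:24}. So the mechanism is a cancellation between the two halves of $\mathbf{\Delta}_Q$, not the bidegree-preservation of each half --- which is why your termwise strategy cannot succeed.
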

\begin{proof}
  From \eqref{eq:21} one has $T^* = -T$ when $\mathcal{L}_TJ=0$, and
  then by \eqref{eq:18}
  \begin{align*}
    D^*D & = (i_T D)^* (i_TD) = (-T -d_Q^J
           \delta_Q)(T - d_Q \delta_Q^J)  \\
         & = -T^2 + T (-d_Q^J \delta_Q +d_Q
           \delta_Q^J) + d_Q^J\delta_Q d_Q \delta_Q^J \,.
  \end{align*}
  There $T$ and $-d_Q^J \delta_Q + d_Q \delta_Q^J$ commute with $J$,
  whereas by Proposition~\ref{prop:Laplacian-zero-torsion}
  \begin{align*}
    d_Q^J \delta_Q d_Q \delta_Q^J
    & = d_Q^J (\Delta_Q -
      d_Q \delta_Q) \delta_Q^J = d_Q^J \Delta_Q^J \delta_Q^J - d_Q^J d_Q
      \delta_Q \delta_Q^J \\
    & = (d_Q^J \delta_Q^J)^2 - d_Q^J d_Q \delta_Q \delta_Q^J\,.
  \end{align*}
  The last term preserves the bidgree by \eqref{eq:23}. Finally adding
  $(d_Q \delta_Q)^2$ gives that $\mathbf{\Delta}_Q$ commutes with
  $J$. Note that $\mathbf{\Delta}_Q$ has no $(2,-2)$ (and $(-2,2)$)
  component neither since it can come only from combination of type
  \begin{displaymath}
    \partial_Q \overline\partial_Q^* \partial_Q \overline\partial_Q^*
    = -\partial_Q \partial_Q \overline\partial_Q^*
    \overline\partial_Q^* =0 \,,
  \end{displaymath}
  by \eqref{eq:24} and \eqref{eq:23}.
\end{proof}

In order to get fourth order Laplacians for all degrees we shall
define
\begin{equation}
  \label{eq:27}
  \mathbf{\Delta}_Q = \Delta_Q^2 = (d_Q\delta_Q)^2 + (\delta_Q
  d_Q)^2 \quad \mathrm{on} \ \mathcal{E}^p \
  \mathrm{for}\ p < n\,.
\end{equation}
We extend these operators on all $\Omega^*H$ using the Lefschetz
decomposition \eqref{eq:15} and requiring that
\begin{displaymath}
  L \mathbf{\Delta}_Q =
  \mathbf{\Delta}_Q L \,.
\end{displaymath}
In this way, the Laplacian $\mathbf{\Delta}_Q$ commutes with all the
algebra of operators we face here: $L$, $J$, $d_Q$, $d_H$ and their
adjoints. It will play the role of a ``Casimir'' operator in our
situation.

\smallskip

To conclude this part, we mention the main analytic result on the
$Q$-Laplacian.
\begin{thm}\cite[p.~290]{Rumin94}
  The $Q$-Laplacians $\Delta_Q$ and $\mathbf{\Delta}_Q$ are maximally
  hypoelliptic on any compact contact manifold.
\end{thm}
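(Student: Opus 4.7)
\smallskip

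The plan is to apply the Rockland--Helffer--Nourrigat criterion for maximal hypoellipticity in the Heisenberg pseudodifferential calculus on $M$. In this calculus horizontal vector fields have order $1$, while the Reeb field $T$, recovered by one bracket since $[X,Y] \equiv -d\theta(X,Y)\,T \mod H$ for $X,Y \in H$, has order $2$. Consequently $d_Q$ and $\delta_Q$ are of Heisenberg order $1$ away from middle degree; the second-order nature of $D$ visible in \eqref{eq:12} places it at order $2$; and both $\Delta_Q$ (for $p < n$) and $\mathbf{\Delta}_Q$ sit at the top of their natural Heisenberg orders, namely $2$ and $4$. Maximal hypoellipticity is then equivalent to the subelliptic estimate $\|u\|_{H^{m}_{\mathrm{Heis}}} \le C(\|Pu\|_{L^2} + \|u\|_{L^2})$ with $m$ the Heisenberg order of $P$.

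\smallskip

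By Helffer--Nourrigat, this estimate reduces to a pointwise Rockland-type condition: at each $x \in M$, the frozen-coefficient model $\sigma_x(P)$ lives on the osculating group $G_x \simeq \mathbb{H}^n$ and must become injective on Schwartz vectors in every non-trivial irreducible unitary representation. These are the Schr\"odinger representations $\pi_\lambda$ on $L^2(\mathbb{R}^n)$, $\lambda \in \mathbb{R}^*$, in which $\pi_\lambda(-iT)$ acts as the scalar $\lambda$ while horizontal derivatives realise, up to factors $|\lambda|^{1/2}$, the creation--annihilation operators of a harmonic oscillator. The algebraic identities \eqref{eq:20} and \eqref{eq:25} together with the middle-degree definition \eqref{eq:26} transfer verbatim to this representation, since the Heisenberg principal symbol is a morphism of the graded algebra they encode.

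\smallskip

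The core step is then to verify that, for every $\lambda \neq 0$, the Schr\"odinger image of the contact complex is acyclic in positive degree on Schwartz vectors. For $\Delta_Q$ on $\mathcal{E}^p$ with $p < n$ this is immediate once one establishes a Hodge decomposition in $\pi_\lambda$: any $\alpha$ with $\pi_\lambda(\Delta_Q)\alpha = 0$ satisfies $\pi_\lambda(d_Q)\alpha = \pi_\lambda(\delta_Q)\alpha = 0$, hence lies in the reduced cohomology, which vanishes because the contact complex is a fine resolution of the constant sheaf on the contractible group $\mathbb{H}^n$ and the $\lambda\neq 0$ component carries no constants. For $\mathbf{\Delta}_Q$ on $\mathcal{E}^n$ as defined in \eqref{eq:26}, $(d_Q\delta_Q)^2$ and $D^*D$ are both non-negative at the same Heisenberg order $4$, so the kernel in $\pi_\lambda$ consists of $\alpha$ with $D\alpha = 0$ and $d_Q\delta_Q \alpha = 0$; combined with the acyclicity in adjacent degrees one gets that such $\alpha$ is harmonic for the combined complex and therefore vanishes.

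\smallskip

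The main obstacle is precisely the last step: promoting the formal cohomological vanishing into an effective statement at the Schwartz level in $\pi_\lambda$, which is what Helffer--Nourrigat actually demands. I would handle it by constructing a Heisenberg parametrix for $\mathbf{\Delta}_Q$ directly in each representation, diagonalising $\pi_\lambda(\mathbf{\Delta}_Q)$ through the Bargmann--Fock decomposition of $L^2(\mathbb{R}^n)$ into harmonic-oscillator weight spaces and checking that, on the orthogonal complement of an explicit finite-dimensional harmonic subspace, it has a positive spectral gap with inverse in the Schwartz class. A secondary delicate point is that in middle degree the fourth-order term $D^*D$ cannot be replaced by a second-order Laplacian, so the Rockland check genuinely requires the particular quartic combination \eqref{eq:26}, which is precisely what justifies Rumin's choice there.
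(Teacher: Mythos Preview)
The paper does not prove this theorem: it is quoted from \cite[p.~290]{Rumin94} and followed only by an informal sentence on what maximal hypoellipticity means, with a pointer to \cite[Section~3.1]{Rumin-Seshadri} for background. There is nothing here to compare your proposal against.

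That said, your plan is essentially the route of the original 1994 proof: reduce to the Rockland--Helffer--Nourrigat criterion in the Heisenberg calculus, pass to the Schr\"odinger representations $\pi_\lambda$ of the osculating group, and check injectivity on smooth vectors there. Your own diagnosis of the weak spot is accurate: the cohomological heuristic (``the contact complex resolves constants on $\mathbb{H}^n$ and $\lambda\neq 0$ carries none'') does not by itself yield injectivity at the Schwartz level, and the real work is the explicit Bargmann--Fock diagonalisation you describe in your last paragraph --- which is precisely what Rumin carries out. One caution: you invoke \eqref{eq:25}, which in this paper is stated under the hypothesis that $J$ is integrable and $T$-invariant, while the theorem is claimed for an arbitrary calibrated $J$ on $M$. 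This is harmless because the Rockland check takes place on the osculating Heisenberg group, whose standard CR structure is automatically integrable and $T$-invariant, but you should say so explicitly rather than leave it implicit.
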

Roughly speaking, that means that these operators control as much
horizontal derivatives as possible: two for $\Delta_Q$ and four for
$\mathbf{\Delta}_Q$. See for instance the discussion in
\cite[Section~3.1]{Rumin-Seshadri} for a presentation of main
properties and references about this analytic notion. A consequence
for this paper is that on compact contact manifolds, the
$Q$-Laplacians are self-adjoint and possess pure point
spectrum. Moreover the associated heat kernels
$e^{-t\mathbf{\Delta}_Q}$ are smooth hence trace class in this
setting.

\section{Results on the contact analytic torsion}
\label{sec:results-cont-analyt}

We now state our main result on the torsion function.  We first define
the contact manifolds we will be concerned in.

\begin{defn}
  \label{def:Seifert_CR}
  Let $M$ be a compact contact manifold of any dimension. We shall say
  that $M$ is a \emph{CR Seifert manifold} if it admits a locally free
  circle action generated by a Reeb field $T$ that preserves moreover
  an integrable complex (CR) structure $J$ on $H$.
\end{defn}
The orbit space $N = M/\mathbf{S^1}$ of such a manifold inherits a
structure of a K\"ahlerian orbifold with cyclic quotient
singularities; see e.g. \cite[p.~52]{Fried87}.  Note that these
manifold are particular cases of Sasakian manifolds, called
quasi-regular, whose Reeb field have closed orbits; see
\cite{Ornea-Verbitsky}.

We also endow $M$ with a unitary representation
$\rho: \pi_1(M) \rightarrow U(d)$ (that can be trivial). This possibly
allows to twist the contact complex by taking values in the associated
flat bundle $V$. That will highlight the role of each individual
homotopy class of closed orbit in the dynamical expression.

\smallskip

Using notations introduced in Sections~\ref{sec:heat-analyt-tors} and
\ref{sec:dynamical-viewpoint} we recall that the initial heat torsion
function is spectral and given by \eqref{eq:1}
\begin{displaymath}
  \vartheta(t) = \sum_{k=0}^n (-1)^k (n+1-k) \tr (e^{-t\mathbf{\Delta}_Q}\mid
  \mathcal{E}^k). 
\end{displaymath}
We also define the topological theta function
\begin{displaymath}
  \vartheta^{top} (t) = \sum_{\lambda \in \spec (iT)}
  \ind(D_H^{ev} \mid V_\lambda)  e^{-t\lambda^2} \,,
\end{displaymath}
the geometric expression
\begin{displaymath}
  \vartheta^{geo} = \chi(N^*) \chi^\theta_\rho(f) + \sum_{i}
  \chi^\theta_\rho(f_i) \,, 
\end{displaymath}
and the dynamical series
\begin{displaymath}
  \vartheta^{dyn}(t) = \frac{1}{\sqrt{4\pi t}}
  \sum_\gamma \chi_\rho(\gamma) e(\gamma) e^{-\ell(\gamma)^2/ 4t}\,. 
\end{displaymath}
We shall prove the following.
\begin{thm}
  \label{thm:torsion_spec_top_dyn}
  Let $M$ be a CR Seifert manifold of any dimension endowed with a
  unitary representation. Then it holds that
  \begin{displaymath}
    \vartheta = \vartheta^{top}= \vartheta^{geo} = \vartheta^{dyn} \,.
  \end{displaymath}
\end{thm}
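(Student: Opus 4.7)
The argument proceeds in three steps matching the chain $\vartheta = \vartheta^{top} = \vartheta^{geo} = \vartheta^{dyn}$, each of a rather different nature.

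\textbf{First step ($\vartheta = \vartheta^{top}$).} The plan is to first establish the intermediate identity $\vartheta(t) = \tr(e^{tT^2}\mid \mathcal{H}^{ev}) - \tr(e^{tT^2}\mid\mathcal{H}^{odd})$ announced in Section \ref{sec:heat-analyt-tors}, and then Fourier-decompose along the circle orbits. I would extend $\mathbf{\Delta}_Q$ from the primitive pieces $\mathcal{E}^k$ to the full horizontal algebra $\Omega^*H$ via the Lefschetz decomposition \eqref{eq:15}, and rewrite the alternating combination of traces on the $\mathcal{E}^k$ as an unweighted alternating combination of traces on $\Omega^k H$ — the arithmetic weight $(-1)^k(n+1-k)$ is exactly the one that performs this unfolding through Lefschetz. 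On $\Omega^*H$ the extended $\mathbf{\Delta}_Q$ commutes with $L$, $J$, $d_Q$, $d_H$, and Hodge pairing by $d_H+\delta_H$ cancels all non-harmonic eigenspaces in the alternating trace; the residual contribution is supported on $\mathcal{H} = \ker D_H$. Using \eqref{eq:17} (which relates $d_H$ to $d_Q$), the identity $\Delta_Q = 0$ characterizes $\mathcal{H}$, and then Proposition \ref{prop:Laplacian-zero-torsion} together with Proposition \ref{prop:middle-degree-case-1} gives $\mathbf{\Delta}_Q = -T^2$ on $\mathcal{H}$. Thus the heat kernel restricts to $e^{tT^2}$. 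Finally the Fourier decomposition \eqref{eq:4} splits $\mathcal{H}\cap V_\lambda$ into finite-dimensional eigenspaces on which $T$ acts by $-i\lambda$, and a McKean--Singer type identity identifies the resulting alternating dimension with $\ind(D_H^{ev}\mid V_\lambda)$, producing $\vartheta^{top}$.

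\textbf{Second step ($\vartheta^{top} = \vartheta^{geo}$).} Here I would apply Kawasaki's index theorem \cite{Kawasaki} to each $\ind(D_H^{ev}\mid V_\lambda)$, viewed on the K\"ahlerian $V$-manifold $N = M/\mathbb{S}^1$. The smooth part of the index is $\chi(N^*)\dim V_\lambda$ restricted to the fiber-type compatible with $\lambda$, and the exceptional fibers $p_i$ contribute characters of $\rho(f_i)$ on the appropriate eigenspaces $V^x_{f_i}$. Summing over $\lambda\in\spec(iT)$ with weight $e^{-t\lambda^2}$, the Fourier indices $\lambda$ attached to a fixed holonomy eigenvalue $e^{2i\pi x}$ of $f$ (resp. $f_i$) run through the shifted integers $\frac{2\pi}{\ell(f)}(n+x)$ ($n\in\Z$), and each sum collapses to the Jacobi theta function $\theta(x,4\pi^2 t/\ell(f)^2)$. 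Regrouping via \eqref{eq:6} produces $\chi^\theta_\rho(f)$ and $\chi^\theta_\rho(f_i)$, which is precisely $\vartheta^{geo}$.

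\textbf{Third step ($\vartheta^{geo} = \vartheta^{dyn}$).} This step is the Jacobi/Poisson functional equation
\[
\theta\bigl(x,4\pi^2 t/\ell^2\bigr) \;=\; \frac{\ell}{\sqrt{4\pi t}}\sum_{m\in\Z} e^{-\ell^2 m^2/4t}\,e^{2i\pi m x}.
\]
Plugged into $\vartheta^{geo}$, the character weighting $\sum_x \dim V^x_\gamma\, e^{2i\pi m x}$ reassembles into $\tr(\rho(\gamma)^m) = \chi_\rho(\gamma^m)$, so the dual index $m$ becomes the winding number along the primitive orbit $\gamma$ and $\ell(\gamma^m) = |m|\ell(\gamma)$ appears in the exponent. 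The coefficients $\ell(f)\chi(N^*)$ and $\ell(f_i)$ then recombine with the $m=0$ contributions through the orbifold identity $\chi(N) = \chi(N^*)+\sum_i 1/\alpha_i$ to yield the prefactors $e(\gamma)$ defined before \eqref{eq:8}, producing $\vartheta^{dyn}$.

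\textbf{Main obstacle.} The hard step is the first one. What must be verified carefully is the cancellation of all non-harmonic contributions in the specific alternating combination $(-1)^k(n+1-k)$, while correctly handling the middle-degree asymmetry between $\mathbf{\Delta}_Q = \Delta_Q^2$ for $p<n$ and $\mathbf{\Delta}_Q = (d_Q\delta_Q)^2 + D^*D$ on $\mathcal{E}^n$. The identification of $\ker \mathbf{\Delta}_Q$ with the horizontal kernel $\mathcal{H} = \ker D_H$, and the reduction $\mathbf{\Delta}_Q = -T^2$ on $\mathcal{H}$, both rely essentially on the integrability of $J$ and on $\mathcal{L}_T J = 0$; without these, the commutation of $\mathbf{\Delta}_Q$ with $J$ and with $L$ fails and the spectral symmetry is lost. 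Once this structural reduction is in place, the remaining identities $\vartheta^{top}\to\vartheta^{geo}\to\vartheta^{dyn}$ are driven by well-developed tools — Kawasaki's index theorem for $V$-manifolds and Poisson summation for Jacobi theta series.
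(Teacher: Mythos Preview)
Your three-step structure matches the paper exactly, and your second and third steps (Kawasaki's index formula plus Poisson summation) are correct and essentially identical to the paper's Sections~\ref{sec:index-computations}--\ref{sec:varth-from-dynam}.

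The gap is in the first step, at the point where you claim that ``the identity $\Delta_Q = 0$ characterizes $\mathcal{H}$'' and later speak of ``the identification of $\ker \mathbf{\Delta}_Q$ with the horizontal kernel $\mathcal{H} = \ker D_H$''. Neither is true. The space $\mathcal{H} = \ker(d_H+\delta_H)$ is \emph{infinite}-dimensional (it contains all CR functions), while $\ker\mathbf{\Delta}_Q$ is the finite-dimensional cohomology. Since $d_H$ is not a complex, $\ker D_H \neq \ker d_H \cap \ker \delta_H$, and the relation \eqref{eq:17} between $d_H$ and $d_Q$ does not yield $\Delta_Q = 0$ on $\mathcal{H}$. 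In fact, on the part of $\mathcal{H}$ coming from $\mathcal{E}^p$ with $p<n$ one has $\Delta_{\overline\partial_Q}=0$ but $\Delta_Q = -iT \neq 0$, so your route from Proposition~\ref{prop:Laplacian-zero-torsion} to $\mathbf{\Delta}_Q = -T^2$ breaks down: you would only recover the $T=0$ slice.

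What the paper does instead (and what your proposal is missing) is Proposition~\ref{prop:conjugation_D_H}: the unitary $U = e^{i\pi(L+\Lambda)/4}$ conjugates $D_H$ to $\sqrt{2}(\overline\partial_H + \overline\partial_H^*)$. Since $\overline\partial_H^2 = 0$ on CR Seifert manifolds, one can now work with the genuine Laplacian $\Delta_{\overline\partial_H}$, whose kernel $\mathcal{H}_{\overline\partial_H}$ decomposes cleanly through both bidegree and the Lefschetz filtration (Proposition~\ref{prop:decomposition-kerH-dbar}). A case-by-case analysis on each Lefschetz piece then shows $\mathbf{\Delta}_Q = -T^2$ there, and the conjugation transports this back to $\mathcal{H}$. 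This conjugation trick is the missing idea; without it, the description of $\mathcal{H}$ and the reduction of $\mathbf{\Delta}_Q$ on it are not accessible by the argument you sketch.
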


\subsection{From spectral to topological torsion series.}
\label{sec:from-spectr-topol}

\ \smallskip

We start with the proof of the first identity
$\vartheta= \vartheta^{top}$. Recall that by Lefschetz decomposition
\eqref{eq:15}
\begin{displaymath}
  \Omega^* H =
  \bigoplus_{0\leq p \leq q \leq n} L^p \mathcal{E}^{n-q} \,.
\end{displaymath}
Therefore $\Omega^*H$ contains $n+1-k$ copies of each $\mathcal{E}^k$
for $0\leq k\leq n$. This is the multiplicity of $\mathcal{E}^k$ in
$\vartheta$. Moreover, since $L= d\theta\wedge\cdot$ preserves the
parity, the parity of forms in $\Omega^*H$ and their components in
$\mathcal{E}^*$ coincides. Thus the plus and minus parts of
$\vartheta$ combine to give
\begin{displaymath}
  \vartheta(t) = \tr (e^{-t \mathbf{\Delta}_Q} \mid \Omega^{ev} H) -
  \tr (e^{-t \mathbf{\Delta}_Q} 
  \mid \Omega^{odd} H) \,. 
\end{displaymath}

Now $D_H = d_H + \delta_H$ exchanges $\Omega^{ev}H$ and
$\Omega^{odd}H$ and commutes with $\mathbf{\Delta}_Q$ by
Section~\ref{prop:middle-degree-case-1}. Hence, setting
$\mathcal{H}= \ker D_H$, one has
\begin{displaymath}
  \vartheta(t) = \tr (e^{-t \mathbf{\Delta}_Q} \mid \mathcal{H}^{ev}) -
  \tr (e^{-t \mathbf{\Delta}_Q}  \mid \mathcal{H}^{odd}) \,.
\end{displaymath}
We need the following Lemma to identify the residual spectrum on
$\mathcal{H}$.
\begin{lemma}
  \label{lemma:Delta_Q_on_H}
  On CR Seifert manifolds, one has $\mathbf{\Delta}_Q = -T^2$ on
  $\mathcal{H}=\ker D_H$.
\end{lemma}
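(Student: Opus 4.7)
The plan is to decompose $\mathcal{H} = \ker D_H$ via the Lefschetz decomposition of $\Omega^*H$ and verify the identity $\mathbf{\Delta}_Q = -T^2$ on primitive components in each degree. Since $\mathbf{\Delta}_Q$, $T^2$, and $L$ all commute (the first by the construction in \eqref{eq:27} extending via $L$-commutativity, the second by \eqref{eq:21}), and both commute with $D_H$, it suffices to treat primitive forms $\beta \in \mathcal{E}^p$ in the cases $p < n$ and $p = n$ and then to handle the coupling among Lefschetz components of non-primitive elements.

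For $\beta \in \mathcal{E}^p$ primitive with $p < n$, formula \eqref{eq:17} gives $d_H\beta = d_Q\beta - \frac{L}{n-p+1}\delta_Q^J\beta$ with $d_Q\beta$ primitive in $\mathcal{E}^{p+1}$, while a pairing argument (using $\langle \delta_H\beta, L\mu\rangle = \langle\Lambda\beta, d_H\mu\rangle = 0$ for primitive $\beta$) shows $\delta_H\beta = \delta_Q\beta$. Orthogonality of the resulting three Lefschetz components forces $d_Q\beta = \delta_Q\beta = 0$, whence by \eqref{eq:20} both $T\beta = 0$ and $\Delta_Q\beta = 0$, so $\mathbf{\Delta}_Q\beta = \Delta_Q^2\beta = 0 = -T^2\beta$. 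For $\beta \in \mathcal{E}^n$ primitive, the convention $d_Q = 0$ on $\mathcal{E}^n$ reduces $D_H\beta = 0$ to $\delta_Q\beta = 0$, and \eqref{eq:20} at middle degree gives $T\beta = i_T D\beta - d_Q\delta_Q^J\beta = i_T D\beta$. The factorization $D^*D = (i_TD)^*(i_TD) = (-T - d_Q^J\delta_Q)(T - d_Q\delta_Q^J)$ obtained in the proof of Proposition \ref{prop:middle-degree-case-1} collapses on $\beta$ to $(-T)(T\beta) = -T^2\beta$, and since $(d_Q\delta_Q)^2\beta = 0$, we conclude $\mathbf{\Delta}_Q\beta = D^*D\beta = -T^2\beta$.

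For non-primitive $\alpha = \sum_j L^j\beta_j \in \mathcal{H}$ of pure total degree $q$, the vanishing of $d_H\alpha$ and $\delta_H\alpha$ separately (since they lie in distinct degrees) produces coupling equations among the $\beta_j$'s of the form $d_Q\beta_{j+1} \propto \delta_Q^J\beta_j$ and $\delta_Q\beta_j \propto d_Q^J\beta_{j+1}$. Combining these with $\partial_Q^2 = \bar\partial_Q^2 = 0$ from \eqref{eq:23} and a bidegree analysis shows that each lower-half component is both $\partial_Q$- and $\bar\partial_Q$-harmonic. The algebraic identity $\mathbf{\Delta}_Q + T^2 = 4\,\Delta_{\partial_Q}\Delta_{\bar\partial_Q}$ on primitive forms in $\mathcal{E}^p$ for $p < n$ --- immediate from $\Delta_Q = \Delta_{\partial_Q} + \Delta_{\bar\partial_Q}$ and $iT = \Delta_{\bar\partial_Q} - \Delta_{\partial_Q}$ of Proposition \ref{prop:Laplacian-zero-torsion} --- then yields the lemma at each such component. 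The main obstacle is the middle-degree case $q = n$, where $\alpha = \beta_0 + L\beta_1 + \cdots$ couples a primitive $\beta_0 \in \mathcal{E}^n$ to $\beta_1 \in \mathcal{E}^{n-2}$ through $\delta_Q\beta_0 = -d_Q^J\beta_1$, so $\delta_Q\beta_0$ need not vanish and the clean argument of Step 2 does not directly apply; verifying that $(\mathbf{\Delta}_Q + T^2)\alpha$ vanishes at the $L^0$ level in this coupled setting demands a careful algebraic manipulation combining the second-order form $\mathbf{\Delta}_Q = (d_Q\delta_Q)^2 + D^*D$ on $\mathcal{E}^n$ with the anticommutation relations of \eqref{eq:23} and the identities of \eqref{eq:20}.
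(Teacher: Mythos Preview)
Your attempt has two genuine gaps, and you already flag one of them.

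First, the crucial assertion in your third paragraph --- that the coupling equations together with a ``bidegree analysis'' force each Lefschetz component $\beta_j$ to be both $\partial_Q$- and $\bar\partial_Q$-harmonic --- is neither proved nor true in the form stated. The operator $D_H = d_H + \delta_H$ mixes bidegrees, so $\mathcal{H} = \ker D_H$ does \emph{not} decompose according to bidegree, and there is no straightforward way to separate the $(1,0)$ and $(0,1)$ parts of your coupling relations. In fact, what you actually need (and what is true) is only that the \emph{product} $\Delta_{\partial_Q}\Delta_{\bar\partial_Q}\beta_j$ vanishes; your stronger claim fails in general. Note also that your purely primitive case in the first paragraphs forces $T\beta = 0$, so it captures only the finite-dimensional ``de Rham harmonic'' part of $\mathcal{H}$; all the interesting infinite-dimensional content (e.g.\ CR functions) lives in genuinely coupled Lefschetz sums. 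Second, as you yourself concede, the middle-degree case $q = n$ with $\delta_Q\beta_0 \neq 0$ is left unfinished.

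The paper avoids both difficulties by a single algebraic trick: Proposition~\ref{prop:conjugation_D_H} shows that the unitary $U = e^{i\pi(L+\Lambda)/4}$ conjugates $D_H$ into $\sqrt{2}\,(\bar\partial_H + \bar\partial_H^*)$. Since $\bar\partial_H^2 = 0$ on CR Seifert manifolds, the kernel $\mathcal{H}_{\bar\partial_H} = \ker\Delta_{\bar\partial_H}$ \emph{does} decompose by bidegree, and moreover $\Delta_{\bar\partial_H}$ preserves each Lefschetz piece $L^k\mathcal{E}^{n-q}$ (up to a $T$-shift). This yields the explicit description in Proposition~\ref{prop:decomposition-kerH-dbar}, from which $\mathbf{\Delta}_Q = -T^2$ is checked case by case on $\mathcal{H}_{\bar\partial_H}$. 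The conclusion transfers back to $\mathcal{H} = U(\mathcal{H}_{\bar\partial_H})$ because $U$ commutes with both $\mathbf{\Delta}_Q$ and $T$. The conjugation is exactly the device that decouples the Lefschetz and bidegree interactions you were unable to untangle directly.
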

Note that $\mathcal{H}=\ker D_H$ should not be confused with
$\ker \Delta_H=\ker d_H \cap \ker \delta_H$ since $d_H$ is not a
complex by \eqref{eq:11}. The next proposition will lead us to a
useful description of $\mathcal{H}$.
\begin{prop}
  \label{prop:conjugation_D_H}
  Let $P=L+\Lambda$ acting on $\Omega^*H$ and $U= e^{i\pi P/4}$. Then
  on a contact manifold with an integrable $J$, it holds that
  \begin{displaymath}
    \left\{
      \begin{aligned}
        [P,D_H]& = d_H^J - \delta_H^J \\
        [P,d_H^J -\delta_H^J] & = D_H
      \end{aligned}
    \right.
  \end{displaymath}
  and
  \begin{displaymath}
    U^{-1}D_H U = \sqrt2 (\overline\partial_H + \overline\partial_H^*)\,.
  \end{displaymath}
\end{prop}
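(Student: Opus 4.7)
The plan is to establish the two bracket identities by direct algebraic computation using the Kähler-type relations from Section~\ref{sec:misc-form}, then interpret $U^{-1}D_HU$ as an exponentiation of $\mathrm{ad}_P$ acting on the two-dimensional subspace spanned by $D_H$ and $C := d_H^J - \delta_H^J$, and finally decompose the result into bidegree pieces.

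\textbf{Step 1 (the two commutators).} The identity $[L,d_H]=0$ of \eqref{eq:11} gives by adjunction $[\Lambda,\delta_H]=0$, while $[\Lambda,d_H]=-\delta_H^J$ from \eqref{eq:16} gives by adjunction $[L,\delta_H]=d_H^J$ (using that $J$ is unitary so $(\delta_H^J)^*=d_H^J$). Expanding
\begin{displaymath}
  [P,D_H]=[L,d_H]+[L,\delta_H]+[\Lambda,d_H]+[\Lambda,\delta_H]
\end{displaymath}
then yields $[P,D_H]=d_H^J-\delta_H^J$. For the second identity I would first note that $L$ and $\Lambda$ commute with $J$ (since $d\theta$ is of type $(1,1)$, so $L^J=L$), hence $J^{-1}PJ=P$, and therefore
\begin{displaymath}
  [P,d_H^J-\delta_H^J]=J^{-1}[P,d_H-\delta_H]J=J^{-1}(-\delta_H^J-d_H^J)J.
\end{displaymath}
Since $J^2$ acts as $(-1)^p$ on $p$-forms and $d_H,\delta_H$ shift degree by $\pm1$, conjugation by $J^2$ flips their signs, giving $[P,d_H^J-\delta_H^J]=d_H+\delta_H=D_H$.

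\textbf{Step 2 (exponentiation).} Setting $A=i\pi P/4$ and iterating Step~1 gives $\mathrm{ad}_A^{2k}(D_H)=(i\pi/4)^{2k}D_H$ and $\mathrm{ad}_A^{2k+1}(D_H)=(i\pi/4)^{2k+1}C$. The series $U^{-1}D_HU=e^{-\mathrm{ad}_A}(D_H)$ then sums to
\begin{displaymath}
  U^{-1}D_HU=\cos(\pi/4)\,D_H-i\sin(\pi/4)\,C=\tfrac{\sqrt2}{2}\bigl(D_H-i(d_H^J-\delta_H^J)\bigr).
\end{displaymath}

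\textbf{Step 3 (bidegree reduction).} Using the convention $J=i^{a-b}$ on $\mathcal{E}^{a,b}$, direct computation shows $\partial_H^J=-i\partial_H$, $\overline\partial_H^J=i\overline\partial_H$, $\partial_H^{*J}=i\partial_H^*$, $\overline\partial_H^{*J}=-i\overline\partial_H^*$. Writing $X=\overline\partial_H+\overline\partial_H^*$ and $Y=\partial_H+\partial_H^*$, one obtains $D_H=X+Y$ and $d_H^J-\delta_H^J=i(X-Y)$, so
\begin{displaymath}
  D_H-i(d_H^J-\delta_H^J)=(X+Y)+(X-Y)=2X,
\end{displaymath}
and hence $U^{-1}D_HU=\sqrt2\,(\overline\partial_H+\overline\partial_H^*)$.

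The only delicate part is keeping the $J$-conventions straight in Step~1 (especially verifying $(\delta_H^J)^*=d_H^J$ and that $J^2$ simply reduces to a parity operator) and in Step~3 (computing the action of $J$-conjugation on the individual bidegree pieces); once those are in place the exponential series collapses immediately because $\mathrm{ad}_A$ acts as a rotation by $\pi/4$ in the plane $\{D_H,iC\}$, transporting the full Laplace-type operator $D_H$ onto its purely anti-holomorphic half.
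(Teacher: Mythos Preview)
Your proof is correct and follows essentially the same route as the paper: the commutator identities are obtained from $[L,d_H]=0$, $[\Lambda,d_H]=-\delta_H^J$ and their adjoints, the conjugation by $U$ is computed by exponentiating $\mathrm{ad}_P$ (the paper writes this as the general formula $e^{i\varphi\,\mathrm{ad}(P)}D_H=\cos\varphi\,D_H+i\sin\varphi\,(d_H^J-\delta_H^J)$ specialized at $\varphi=-\pi/4$), and the final identification uses the bidegree splitting $d_H=\partial_H+\overline\partial_H$. Your Step~3 makes the last reduction more explicit than the paper does, but the argument is the same.
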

\begin{proof}
  The first identities come from \eqref{eq:11} and \eqref{eq:16}
  giving $[L,d_H]= 0$ and $[\Lambda,d_H]=-\delta_H^J$ and conjugated
  relations. Therefore for any angle $\varphi$
  \begin{align*}
    e^{i\varphi\, \mathrm{ad}(P)} D_H
    &= \cos \varphi D_H + i \sin
      \varphi (d_H^J-\delta_H^J) \\
    & = \mathrm{Ad}(e^{i\varphi P}) D_H = e^{i\varphi P}  D_H
      e^{-i\varphi P} \,.
  \end{align*}
  This yields the last identity using $\varphi= -\pi/4$ and the
  splitting of $d_H = \partial_H + \overline\partial_H$ for integrable
  $J$ by \eqref{eq:22}.
\end{proof}

Let
$\mathcal{H}_{\overline\partial_H}=\ker (\overline\partial_H +
\overline\partial_H^*)$.  By the previous proposition
\begin{displaymath}
  \mathcal{H}= \ker D_H = U (\mathcal{H}_{\overline\partial_H})\,,
\end{displaymath}
with the advantage that on CR Seifert manifolds
$\overline\partial_H^2=0$ from \eqref{eq:11}. Hence one has also
$\mathcal{H}_{\overline\partial_H}= \ker \Delta_{\overline\partial_H}$
with
\begin{displaymath}
  \Delta_{\overline\partial_H} = \overline\partial_H \overline\partial_H^*
  + \overline\partial_H^* \overline\partial_H \,.
\end{displaymath} 
Now by definition $\Delta_{\overline\partial_H}$ preserves the
bi-degree of forms, but also the spaces $L^k \Omega^p_0H$ in the
Lefschetz decomposition. This is due to the commutation relation
\begin{displaymath}
  \Delta_{\overline\partial_H}L = L(\Delta_{\overline\partial_H} -i T)
\end{displaymath}
that comes from \eqref{eq:16} when $\mathcal{L}_T J=0$. Indeed, from
\begin{displaymath}
  [L, \overline\partial_H] = 0 \quad \mathrm{and} \quad [L,
  \overline\partial_H^*]= -i \partial_H  
\end{displaymath}
one gets
\begin{displaymath}
  \Delta_{\overline\partial_H}L - L\Delta_{\overline\partial_H} = i
  (\partial_H \overline \partial_H + \overline\partial_H \partial_H) = -iL
  T 
\end{displaymath}
by \eqref{eq:11}. This leads eventually to the splitting of
$\mathcal{H}_{\overline\partial_H}$ into forms of pure bidegree and
type in the Lefschetz decomposition. Using \eqref{eq:17} we first
find.
\begin{prop}
  \label{prop:decomposition-kerH-dbar}
  On a CR Seifert manifold, one has
  \begin{displaymath}
    \ker \overline\partial_H =
    \begin{cases}
      \ker \overline\partial_Q \cap \ker \partial_Q^* & \quad
      \mathrm{on}\
      L^k \mathcal{E}^{n-q} \ \mathrm{for}\ 0\leq  k< q\leq n\\
      \ker \partial_Q^* & \quad \mathrm{on} \ L^q \mathcal{E}^{n-q}
    \end{cases}
  \end{displaymath}
  and
  \begin{displaymath}
    \ker \overline\partial_H^*  =
    \begin{cases}
      \ker \overline\partial_Q^* \cap \ker \partial_Q & \quad
      \mathrm{on}\
      L^k \mathcal{E}^{n-q} \ \mathrm{for}\ 1 \leq k\leq q \leq n\\
      \ker \overline\partial_Q^* & \quad \mathrm{on} \ \mathcal{E}^{p}
      \ \mathrm{for} \ 0\leq p\leq n\,.
    \end{cases}
  \end{displaymath}
\end{prop}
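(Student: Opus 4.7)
The plan is to make the action of $\overline\partial_H$ (and then its adjoint) on the Lefschetz decomposition explicit, and to read off the kernel from the observation that the natural summands of each operator land in orthogonal Lefschetz pieces.

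First, using $J = i^{a-b}$ on bidegree $(a,b)$-forms, a direct computation gives
\begin{displaymath}
  J^{-1}\partial_Q^* J = i\,\partial_Q^*, \qquad J^{-1}\overline\partial_Q^* J = -i\,\overline\partial_Q^*,
\end{displaymath}
hence $\delta_Q^J = i\,\partial_Q^* - i\,\overline\partial_Q^*$. Substituting this into \eqref{eq:17} and splitting $d_H = \partial_H + \overline\partial_H$ by bidegrees, since $L$ has bidegree $(1,1)$, gives on $\mathcal{E}^{n-q}$ a formula of the shape
\begin{displaymath}
  \overline\partial_H = \overline\partial_Q - c_q\, L\,\partial_Q^*
\end{displaymath}
with a nonzero constant $c_q$, and with the convention that $\overline\partial_Q=0$ on $\mathcal{E}^n$. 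Since $L$ has type $(1,1)$ and $[L,d_H]=0$ by \eqref{eq:11}, one has $[L,\overline\partial_H]=0$, so for any $\alpha \in \mathcal{E}^{n-q}$ and $k\geq 0$
\begin{displaymath}
  \overline\partial_H(L^k\alpha) = L^k\,\overline\partial_Q\alpha \;-\; c_q\, L^{k+1}\,\partial_Q^*\alpha.
\end{displaymath}

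The two summands lie respectively in $L^k\mathcal{E}^{n-q+1}$ and $L^{k+1}\mathcal{E}^{n-q-1}$, which are distinct orthogonal components of the Lefschetz decomposition \eqref{eq:15}. So $\overline\partial_H(L^k\alpha)=0$ iff each summand vanishes. The standard injectivity rule for Lefschetz powers---namely $L^m$ is injective on $\mathcal{E}^{n-r}$ for $m\leq r$ and $L^{r+1}\mathcal{E}^{n-r}=0$---then implies that $L^{k+1}$ is always injective on $\mathcal{E}^{n-q-1}$ in the range $0\leq k\leq q$, forcing $\partial_Q^*\alpha=0$; while $L^k$ on $\mathcal{E}^{n-q+1}$ is injective for $k<q$ (forcing also $\overline\partial_Q\alpha=0$) but identically zero for $k=q$, in which case the $\overline\partial_Q$-condition becomes vacuous. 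This yields the two cases claimed for $\ker\overline\partial_H$.

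For $\ker\overline\partial_H^*$ I would argue by duality. Orthogonality of the Lefschetz decomposition shows that the pairing $\langle\overline\partial_H\alpha,\, L^{k'}\beta_0\rangle$ with $\beta_0\in\mathcal{E}^{n-q'}$ is non-trivial only for sources $\alpha\in L^{k'}\mathcal{E}^{n-q'-1}$ (through the $\overline\partial_Q$-term) or $\alpha\in L^{k'-1}\mathcal{E}^{n-q'+1}$ (through the $L\partial_Q^*$-term). Taking adjoints produces a two-term formula of the form
\begin{displaymath}
  \overline\partial_H^*(L^{k'}\beta_0) = a\,L^{k'}\,\overline\partial_Q^*\beta_0 \;+\; b\,L^{k'-1}\,\partial_Q\beta_0\,,
\end{displaymath}
with constants $a,b$ that are nonzero whenever the respective source spaces are non-trivial. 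The two summands again occupy distinct Lefschetz components, so the kernel condition decouples. The second summand is absent precisely when $k'=0$ (no $L^{-1}$), yielding the boundary case $\ker\overline\partial_H^* = \ker\overline\partial_Q^*$ on $\mathcal{E}^p=L^0\mathcal{E}^p$; for $k'\geq 1$ both contributions are non-degenerate (the relevant $L^{k'}$ and $L^{k'-1}$ being injective on the source primitives by the same rule), leaving $\ker\overline\partial_Q^* \cap \ker\partial_Q$, as stated. The only real task here is the index bookkeeping relating the Lefschetz coordinates of the source primitives to those of $L^{k'}\mathcal{E}^{n-q'}$, together with the verification of the injectivity thresholds for the Lefschetz powers at each step; no heavy computation beyond the algebraic identities of Section~\ref{sec:misc-form} is needed.
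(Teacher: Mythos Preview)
Your argument is correct and is precisely the route the paper intends: it simply says ``Using \eqref{eq:17} we first find'' before stating the proposition, leaving the bidegree splitting, the extension via $[L,\overline\partial_H]=0$, and the Lefschetz injectivity bookkeeping to the reader. You have carried out exactly these steps, including the duality argument for $\overline\partial_H^*$, so there is nothing to add.
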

In conclusion
\begin{displaymath}
  \mathcal{H}_{\overline\partial_H} =
  \begin{cases}
    \ker \Delta_Q & \quad \mathrm{on} \ L^k\mathcal{E}^{n-q} \
    \mathrm{for}\
    1\leq k< q \leq n\\
    \ker \overline\partial_Q \cap \ker \partial_Q^* \cap \ker
    \overline\partial_Q^* & \quad \mathrm{on}\ \mathcal{E}^p \
    \mathrm{for}\ p< n\\
    \ker \partial_Q^* \cap \ker \overline\partial_Q^* \cap \ker
    \partial_Q &
    \quad \mathrm{on} \ L^q \mathcal{E}^{n-q}\ \mathrm{for}\ 0 < q \leq n\\
    \ker \partial_Q^* \cap \ker \overline\partial_Q^* & \quad
    \mathrm{on} \ \mathcal{E}^n \,.
  \end{cases}
\end{displaymath}
This leads to the following results on
$\mathcal{H}_{\overline\partial_H}$:
\begin{itemize}
\item On $L^k \mathcal{E}^{n-q}$ for $0\leq k < q \leq n$,
  $\Delta_Q=\Delta_Q^J=\mathcal{L}_T=0$ by \eqref{eq:25},
\item On $\mathcal{E}^p$ for $p<n$, $\Delta_{\overline\partial_Q}=0$,
  so that by \eqref{eq:25} $\Delta_Q = -i T$ and
  $\mathbf{\Delta}_Q = \Delta_Q^2 = - T^2$,
\item On $L^q\mathcal{E}^{n-q}$ for $0< q \leq n$,
  $\Delta_{\partial_Q} =0$, so that $\Delta_Q = i T$ and
  $\mathbf{\Delta}_Q = \Delta_Q^2 = -T^2$,
\item On $\mathcal{E}^n$, $\delta_Q=0= \delta_Q^J=0$, so that
  $i_TD = T$ and $\mathbf{\Delta}_Q = - T^2$.
\end{itemize}
This yields Lemma~\ref{lemma:Delta_Q_on_H} because the isometry
$U = e^{i \frac{\pi}{4} (L+\Lambda)}$ mapping
$\mathcal{H}_{\overline\partial_H}$ to $\mathcal{H}=\ker D_H$ in
Proposition~\ref{prop:conjugation_D_H} commutes with
$\mathbf{\Delta}_Q$ and $T$ hence preserves the equation
$\mathbf{\Delta}_Q = -T^2$.

To conclude with this study of $\mathcal{H}$ we observe that it is
infinite dimensional since it contains images by $U$ of CR functions
(satisfying $ \overline\partial_H f=0$).

\smallskip We have obtained that
\begin{equation}
  \label{eq:28}
  \vartheta(t) = \tr(e^{tT^2} \mid \mathcal{H}^{ev}) - \tr(e^{tT^2 } \mid
  \mathcal{H}^{odd}) \,.
\end{equation}
Note that both traces converge since they are parts of traces of the
heat operators $e^{-t \mathbf{\Delta}_Q}$. The next step will be to
split this as a series of index using Fourier decomposition along the
circle action by $T$. We see this briefly and refer to
\cite[Section~4.1]{Rumin-Seshadri} for a more detailed discussion.

\smallskip

The flat bundle $V$ over $M$ associated to the unitary representation
$\rho : \pi_1(M) \rightarrow U(d)$ is the quotient of the trivial
bundle $\widetilde M \times \C^N$ by the deck transformations
$\gamma.(m,v) = (\tau(\gamma)m, \rho(\gamma) v)$. The circle action
$\varphi_t$ induced by $T$ on $M$ may be lifted to $V$ by parallel
transport using the flat connection $\nabla^\rho$. On $V$ we don't
have a circle action since
\begin{displaymath}
  \varphi_{2\pi} = \rho(f)^{-1}
\end{displaymath}
where $f= \varphi_{[0,2\pi]}(m)$ is the generic closed primitive
orbit. However we can split $V$ into irreducible representations $V^x$
on which
\begin{displaymath}
  \rho(f) = e^{2i \pi x}
\end{displaymath}
with $x \in ]0,1]$ by convention. We recover a circle action on $V^x$
by setting
\begin{displaymath}
  \psi_t = e^{i t x} \varphi_t \,.
\end{displaymath}
We can still perform a Fourier decomposition of sections
$s \in \mathbf{V}^x = \Gamma(M, V^x)$. One has
\begin{displaymath}
  s = \sum_{n\in \Z} \pi_n s \quad \mathrm{with} \quad \pi_n s =
  \frac{1}{2\pi} \int_0^{2\pi } e^{-int } \psi_t(s) dt \,.
\end{displaymath}
Since $\psi_t(\pi_ns) = e^{int} \pi_n s$
one gets that $\nabla^\rho_T(\pi_ns) = i(n-x) \pi_ns$ and the spectrum
of $iT= i\nabla^\rho_T $ on $\mathbf{V}^x$ is the shifted $x+\Z$. For
$\lambda = x+n$ we shall note
\begin{equation}
  \label{eq:29}
  \mathbf{V}_{\lambda} = \pi_{-n} (\mathbf{V}^x) = \mathbf{V}^x \cap \{ iT =
  \lambda\}. 
\end{equation}
Since $T$ commutes with all our geometric operators here as $D_H$, we
can split the $V$-valued bundle
\begin{displaymath}
  \mathcal{H}= \ker D_H = \bigoplus_{\lambda \in \spec(iT)} \mathcal{H}
  \cap \mathbf{V}_\lambda \,.
\end{displaymath}
Therefore \eqref{eq:28} eventually reads
\begin{align*}
  \vartheta(t)
  & = \sum_{\lambda \in \spec(iT)} \bigl(\dim(\mathcal{H}^{ev}
    \cap 
    \mathbf{V}_\lambda )- \dim(\mathcal{H}^{odd} \cap \mathbf{V}_\lambda)
    \bigr) e^{-t \lambda^2} \\
  & = \sum_{\lambda \in \spec (iT)} \ind (D_H^{ev} \mid \mathbf{V}_\lambda)
    e^{-t \lambda^2} = \vartheta^{top}(t) \,,
\end{align*}
with $D_H^{ev} : \Omega^{ev}H \rightarrow \Omega^{odd}H$. This is the
topological version of the $\vartheta$ series. We shall see that these
index can be interpreted as coming from operators and bundles over the
orbifold $N = M/\mathbf{S}^1$

\subsection{Index computations.}
\label{sec:index-computations}

\ \smallskip

The index in the previous formula can be computed using the index
theorem for $V$-(orbi)bundles over $V$-manifolds (orbifolds) as
developed by Kawasaki in \cite{Kawasaki}. One has first to interpret
the index of $D_H^{ev}$ on $\mathbf{V}_\lambda$ as the index of an
operator over the orbifold $N = M /\mathbf{S}^1$ acting on a
$V$-bundle.

\smallskip

We follow the discussion in \cite[section~5.2]{Rumin-Seshadri}. We
first introduce the relevant $V$-(orbi)bundle here. Given a point
$p \in N$, we consider the vector space $V_\lambda(p)$ of sections of
$V^x$ along the orbit $\mathbf{S}^1(p)$ in $M$ satisfying
$iT s = \lambda s$. Call $V_\lambda$ this family of spaces. One has by
definition
\begin{displaymath}
  \mathbf{V}_\lambda = \Gamma(N, V_\lambda) \,,
\end{displaymath}
and $V_\lambda$ is a genuine vector bundle of dimension $\dim V^x$
over the generic non singular points of $N$ since the circle action is
locally free.

We now describe the orbifold structure of $V_\lambda$ for
$\lambda = x+n$ near a singular point $p_j$ corresponding to an
exceptional closed primitive orbit $f_j$ of order $\alpha_j$. Locally
over $p_j$, the bundle $V^x$ is isomorphic to the quotient of the
trivial bundle $\C^n \times \R \times \C^{\dim V^x}$ by the deck
transformation
\begin{displaymath}
  F_j : (p,t,v) \mapsto (M_j(p), t + 2\pi / \alpha_j, \rho(f_j) v)
\end{displaymath}
where $M_j \in U(d)$ generates a cyclic group of order $\alpha_j$. Now
let $\widetilde V_\lambda$ be the trivial bundle over $\C^n$ whose
fiber over $p$ consists in functions
$s_p : \R \rightarrow \C^{\dim V_x}$ satisfying
$s_p(t) = e^{-i\lambda t} s_p(0)$. Since $(iT) s_p = \lambda s_p$, one
sees that a section $s : p \mapsto s_p$ of $\widetilde V_\lambda$ near
$0\in\C^n$ goes down to $\mathbf{V}_\lambda$ if it is invariant by the
deck transform $F_j$ above. This means that $(p, s_p(0))$ is invariant
by
\begin{equation}
  \label{eq:30}
  F_{j,\lambda} : (p, v) \mapsto (M_j(p) , e^{-2i \pi \lambda
    / \alpha_j} \rho(f_j)v) \,.
\end{equation}
Since $f_j^{\alpha_j} \sim f$ in $\pi_1(M)$, one has
$\rho(f_j)^{\alpha_j} = \rho(f) = e^{2i\pi x}$ and
$F_{j,\lambda}^{\alpha_j} = \id$.

This shows that $V_\lambda$ is a $V$-bundle (orbi-bundle) over $N$
since locally over $p_j$ it is the quotient of $\widetilde V_\lambda$
by the finite group $\Gamma_j \simeq \Z/\alpha_j \Z$ generated by
$F_{j,\lambda}$.

\smallskip

At this point we can identify
$\mathrm{ind}(D_H^{ev} \mid \mathbf{V}_\lambda)$ as being the index of
the Dirac operator $D_H^{ev}$ acting on sections of the $V$-bundle
$V_\lambda$ over $N$. Note that even if $d_H$ is not a complex, the
operator $D_H$ is elliptic on $N$ as
$D_H^2 = \Delta_H -L T + T\Lambda = \Delta_H$ up to order $0$ terms on
$V_\lambda$. More concretely from
Proposition~\ref{prop:conjugation_D_H}, $D_H$ is unitarily conjugated
to
$\slashed{\partial}_H = \overline\partial_H + \overline\partial_H^*$
hence
\begin{align*}
  \ind(D_H^{ev} \mid V_\lambda)
  & = \ind(\slashed{\partial}_H \mid \Omega^{ev}H \otimes
    V_\lambda) \\ 
  & = \sum_p (-1)^p \chi_{\overline\partial_H} (N,\Omega^{p,0}H \otimes
    V_\lambda) 
\end{align*}
where $\chi_{\overline\partial_H}(N,\Omega^{p,0}H \otimes V_\lambda)$
is the holomorphic Euler characteristic of the $V$-bundle
$\Omega^{p,0}H \otimes V_\lambda$. Recall that
$\overline\partial_H^2 =0$ on CR Seifert manifolds by
\eqref{eq:11}. From this discussion, one can compute these index using
Kawasaki's Riemann--Roch theorem for complex $V$-manifolds
\cite{Kawasaki}. It reads here
\begin{equation}
  \label{eq:31}
  \ind(D_H^{ev} \mid V_\lambda) = \langle \ch (V_\lambda) \wedge e (N) , [N]
  \rangle_{orb} \,. 
\end{equation}
This pairing on the orbifold $N$ splits into a usual smooth
contribution over the generic orbits of the characteristic classes and
an average of equivariant classes over the finite exceptional orbits;
see \cite{Kawasaki,Atiyah-Bott}.  Let
$N^* = N \setminus \cup_j \{p_j\}$ where $p_j= \pi (f_j)$ are the
singular points. One gets first
\begin{displaymath}
  \langle \ch (V_\lambda) \wedge e (N) , [N]
  \rangle_{orb} = \int_{N^*} \ch(V_\lambda) \wedge e(N) + \sum_j
  \frac{1}{\alpha_j} \sum_{k=1}^{\alpha_j-1} \tr (F_{j,\lambda}^k \mid
  \widetilde V_\lambda) \nu(f_j^k)  \,. 
\end{displaymath}
Here $\ch(V_\lambda)$ is the Chern character of $V_\lambda$ as seen
from $N$, $e(N)$ the Euler class of $N$ and $\nu(f_j^k)$ is the
Lefschetz index of the Poincaré return map along $f_j^k$. Since
$df_j^k \in U(n)$, one has $\nu(f_j^k)= 1$.  Also $e(N)$ being a top
order class, the index formula reduces to
\begin{equation}
  \label{eq:32}
  \langle \ch (V_\lambda) \wedge e (N) , [N]
  \rangle_{orb} = \dim(V^x) \chi(N^*) + \sum_j \dim\ker
  (F_{j,\lambda}- \id \mid \widetilde V_\lambda) \,,
\end{equation}
where $\chi(N^*)$ is the Euler characteristic of the punctured
manifold $N^*$ and
\begin{displaymath}
  \chi (N) = \int_{N^*} e(N) = \chi(N^*) + \sum_j \frac{1}{\alpha_j}
\end{displaymath}
is the rational Euler characteristic of the orbifold $N$.

\subsection{End of proof of the geometric formula for $\vartheta$.}
\label{sec:proof-geom-form}

\ \smallskip

We complete now the proof of the geometric identity
\begin{displaymath}
  \vartheta = \vartheta^{geo} = \chi(N^*) \chi^\theta_\rho(f) + \sum_j
  \chi^\theta_\rho(f_j) \,. 
\end{displaymath}
From \eqref{eq:31}, \eqref{eq:32} and the index series formula for
$\vartheta$ we know that
\begin{equation}
  \label{eq:33}
  \vartheta(t) = \sum_{\lambda \in \spec (iT)} \bigl( \dim(V^x) \chi(N^*) +
  \sum_j \dim\ker (F_{j,\lambda}- \id \mid \widetilde V_\lambda)
  \bigr) e^{-t\lambda^2} \,,
\end{equation}
where we recall that $\spec (iT)$ splits into $\Z + x$ on
$V^x = \ker (\rho(f) - e^{2i\pi x}\id)$.  Hence the smooth
contribution already reads
\begin{displaymath}
  \chi(N^*) \sum_{e^{2i \pi x} \in \spec (\rho(f))} \dim V^x \sum_{n\in \Z}
  e^{-t(n+x)^2} = \chi(N^*) \chi^\theta_\rho(f)(t) \,, 
\end{displaymath}
with $\chi^\theta_\rho$ as defined in \eqref{eq:6}.  In order to prove
\eqref{eq:7} it remains to evaluate the sums over the exceptional
orbits.

From \eqref{eq:30} one has
$F_{j,\lambda} = e^{-2i\pi \lambda/\alpha_j} \rho(f_j)$ on
$\widetilde V_\lambda$ at $p_j= \pi (f_j)$. Therefore
$F_{j,\lambda} = \id$ if $\rho(f_j) = e^{2i\pi
  \lambda/\alpha_j}$. Moreover since $\rho(f_j)^{\alpha_j} = \rho(f)$,
the spectrum of $\rho(f_j)$ on $V^x$ consists in complex numbers
$e^{2 i \pi x_{j,k}}$ with $\alpha_j x_{j,k} = x + n_{j,k}$ for some
integers $n_{j,k}$. Hence if $\lambda = x+ n$ one finds that
\begin{displaymath}
  e^{2i\pi x_{j,k}} = e^{2i\pi \lambda/\alpha_j}  \Leftrightarrow x_{j,k}
  \equiv \frac{x+n}{\alpha_j} \mathrm{mod}\ \Z \Leftrightarrow n \equiv
  n_{j,k}  \ \mathrm{mod}\
  \alpha_j \Z \,,
\end{displaymath}
so that $\lambda = \alpha_j x_{j,k} + \alpha_j p $ with $p \in
\Z$. Let
$V^{x_{j,k}}_{f_j} = \ker (\rho(f_j) - e^{2 i\pi x_{j,k}} \id)$. The
contribution of $f_j$ to \eqref{eq:33} reads
\begin{align*}
  \sum_{\lambda \in \spec (iT)} \dim
  & \ker (F_{j,\lambda} - \id \mid
    \widetilde V_\lambda) e^{-t \lambda^2}
  \\
  & = \sum_{e^{2i\pi x_{j,k}} \in \spec (\rho(f_j))} \dim V^{x_{j,k}}_{f_j}
    \sum_{p\in \Z}e^{-t \alpha_i^2 (x_{j,k} + p)^2} \\
  & = \sum_{e^{2i\pi x_{j,k}} \in \spec (\rho(f_j))} \dim V^{x_{j,k}}_{f_j}
    \theta(x_{j,k}, \alpha_j^2 t) = \chi_\rho^\theta (f_j)(t) \,.
\end{align*}
Note that we replace $\alpha_j $ by $2 \pi/ \ell (f_j)$ in the
definition of $\chi_\rho^\theta (f_j)(t)$ in order to preserve its
homogeneity in the rescaling $\theta \mapsto c^2 \theta$ and
$\ell (f_j) \mapsto c \ell(f_j)$.

\subsection{$\vartheta$ from the dynamical viewpoint.}
\label{sec:varth-from-dynam}

\ \smallskip

It remains to link the geometric formula for $\vartheta$ to its
dynamical one $\vartheta^{dyn}$. This is based on the usual Poisson
formula relating the Gaussian and Jacobi theta function.  It reads
\begin{equation}
  \label{eq:34}
  \theta(x,t) = \sum_{n\in \Z} e^{-t (n+x)^2}  = \sqrt { \frac{\pi}{t} }
  \sum_{n \in \Z} e^{2 i \pi n x} e^{-\pi^2 n^2 / t} \,.
\end{equation}
This leads to the following interpretation of
$\chi_\rho^\theta (\gamma)(t)$ for $\gamma= f$ and $f_i$. One gets
\begin{equation}
  \label{eq:35}
  \chi_\rho^\theta (\gamma)(t) = \frac{\ell(\gamma)}{\sqrt {4 \pi t}}
  \sum_{n\in \Z} \chi_\rho(\gamma^n) e^{-\ell^2(\gamma^n) / 4t}\,.
\end{equation}
Thus $\chi_\rho^\theta(\gamma)(t) $ is a pondered sum of holonomies of
$\rho$ along all the closed curves $\gamma^n$. More precisely, we
recall that the function
\begin{displaymath}
  k_t(y) = \frac{1}{\sqrt{4\pi t}} e^{-y^2/4t}
\end{displaymath}
is the heat kernel on $\R$. Now, given a random (Brownian) curve
$y(s)$ on the circle orbit $\gamma$, it is closed at time $t$ if its
lift $\widetilde y$ in $\R$ satisfies
$\widetilde y(t) - \widetilde y(0) = n \ell (\gamma)$. In such a case,
the rotation index of $y_{[0,t]}$ is $n$. Therefore its holonomy along
$\rho$ is $\chi_\rho(\gamma^n)$. Moreover the probability density of
such a displacement is
$k_t(\ell(\gamma^n)) = \frac{1}{\sqrt{4 \pi t}}
e^{-\ell^2(\gamma^n)/4t}$, that arises in \eqref{eq:35}.

\smallskip

Using \eqref{eq:35} in the geometric expression of $\vartheta$ gives
\begin{align*}
  \sqrt{4\pi t}\, \vartheta(t)
  =  \chi(N^*)
  & \ell(f) \sum_{n \in \Z} \chi_\rho(f^n) e^{- t
    \ell^2(f^n)/4t} \\
  & + \sum_i \ell(f_i) \sum_{n \in \Z} \chi_\rho (f_i^n)
    e^{-t \ell^2(f_i^n)/4t} \,.
\end{align*}
Now for $p \in \Z$, $f_i^{\alpha_i p } \sim f^p$ in $\pi_1(M)$ and
$\ell(f_i) = \ell(f) / \alpha_i$. Thus using
$\chi(N) = \chi(N^*) + \sum_i \frac{1}{\alpha_i}$ one finds that
\begin{align*}
  \sqrt{4\pi t}\, \vartheta(t)
  = & \chi(N) \ell(f) \sum_{n \in \Z} \chi_\rho(f^n) e^{- t
      \ell^2(f^n)/4t} \\
    & + \sum_i \ell(f_i) \sum_{n \notin \alpha_i \Z}
      \chi_\rho (f_i^n)  e^{-t \ell^2(f_i^n)/4t}  \,,
\end{align*}
which gives the required dynamical series \eqref{eq:8} over all
homotopy classes of closed orbits, their inverse, and the constant
loop.

\subsection{Zeta functions viewpoint.}
\label{sec:zeta-funct-viewp}

\ \smallskip

We now turn to identities on the contact analytic torsion from the
viewpoint of zeta functions. On the spectral side, let
\begin{displaymath}
  Z(s) = \sum_{k=0}^n (-1)^k (n+1-k) \zeta(\mathbf{\Delta}_Q\mid
  \mathcal{E}^k)(s)  \,,
\end{displaymath}
and on the dynamical side
\begin{displaymath}
  Z^{dyn} (s) = \sum_{\gamma \not=0 } \chi_\rho(\gamma) e(\gamma)
  |\ell(\gamma)|^{2s-1} \,,
\end{displaymath}
where $\gamma$ ranges over the homotopy classes of non trivial closed
orbits together with their inverse.

On the geometric side at last we consider
\begin{equation}
  \label{eq:36}
  Z^{geo}(s) = \chi(N^*) \chi_\rho^\zeta (f)(s)  + \sum_i \chi_\rho^\zeta
  (f_i)(s) 
\end{equation}
with
\begin{align*}
  \mathrm{order}(\gamma)^{2s} \chi_\rho^\zeta (\gamma) (s)
  = \sum_{e^{2i \pi x}\in \spec^*(\rho(\gamma))}
  & \dim V_\gamma^x  (\zeta(2s, x) + \zeta(2s,1-x)) \\
  & + 2 \zeta(2s) \dim V_\gamma^1   
\end{align*}
where $V^x_\gamma = \ker (\rho(\gamma) - e^{2i\pi x} \id) $, $\zeta$
is Riemann zeta function and
\begin{equation}
  \label{eq:37}
  \zeta(s,x) = \sum_{n \geq 0} \frac{1}{(n+x)^s}
\end{equation}
is Hurwitz zeta function.

\begin{thm}
  \label{thm:zeta-funct-viewp-1}
  On CR Seifert manifolds of any dimension the functions $Z$ and
  $Z^{geo}$ are meromorphic and equal, with a simple pole at $s=1/2$.

  The function $Z^{dyn}$ is meromorphic with a simple pole at $s=0$,
  and one has
  \begin{equation}
    \label{eq:38}
    \Gamma(s) Z(s) = \frac{2^{-2s}}{\sqrt \pi}
    \Gamma(\frac{1}{2} -s)  Z^{dyn} (s) \,.
  \end{equation}
  Moreover
  \begin{displaymath}
    \left\{
      \begin{gathered}
        Z(0) = \mathrm{Res}_{0} (Z^{dyn})= - \chi'(M,\rho) \\
        \mathrm{Res}_{1/2}(Z) = -\frac{1}{2\pi} Z^{dyn}(1/2)= \chi(N)
        \dim V \,.
      \end{gathered}
    \right.
  \end{displaymath}
  with
  \begin{displaymath}
    \chi'(M,\rho) = \sum_{k=0}^n (-1)^k (n+1-k)\dim H^k(M, \rho)\,,
  \end{displaymath}
  using the cohomology groups of the flat bundle $V$ over $M$.
\end{thm}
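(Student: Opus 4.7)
The strategy is to Mellin-transform the identity $\vartheta = \vartheta^{geo} = \vartheta^{dyn}$ established in Theorem~\ref{thm:torsion_spec_top_dyn}. Hodge theory for the contact complex (Proposition~\ref{prop:contact-complex-1}) identifies $\ker(\mathbf{\Delta}_Q \mid \mathcal{E}^k)$ with $H^k(M,\rho)$, so $\vartheta(t)$ converges exponentially to $\vartheta(\infty) = \chi'(M,\rho)$ as $t \to \infty$. The dynamical expression in turn shows that the small-$t$ expansion carries only the constant-loop term $C_0 t^{-1/2}$, with $C_0 = \dim V \cdot \ell(f)\chi(N)/\sqrt{4\pi}$, since every non-trivial closed orbit is Gaussian-suppressed. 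Setting
$$M(s) = \int_0^\infty (\vartheta(t) - \vartheta(\infty))\,t^{s-1}\,dt,$$
convergent for $\Re(s) > 1/2$, one has $\Gamma(s)\,Z(s) = M(s)$ by definition of $Z$. A priori $Z$ is only meromorphic with poles on $S = \{(n{+}1{-}j)/2 : j \in \N\} \setminus (-\N)$; the calculations below will reduce this to a single simple pole at $s = 1/2$.

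The identity $Z = Z^{geo}$ is obtained by substituting $\vartheta^{geo}$ and noticing that $\chi_\rho^\theta(\gamma)(t) \to \dim V^1_\gamma$ as $t \to \infty$ (only the $n+x = 0$ term survives). The elementary Mellin identity
$$\int_0^\infty e^{-4\pi^2 t(n+x)^2/\ell(\gamma)^2}\,t^{s-1}\,dt = \Gamma(s)\Bigl(\frac{\ell(\gamma)}{2\pi}\Bigr)^{2s}\frac{1}{|n+x|^{2s}},$$
summed over $n \in \Z$ and over $e^{2i\pi x} \in \spec \rho(\gamma)$, reassembles precisely the combination of Hurwitz and Riemann zetas in $\chi_\rho^\zeta(\gamma)(s)$, provided $\mathrm{order}(\gamma) = 2\pi/\ell(\gamma)$. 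Absolute convergence for $\Re(s) > 1/2$ justifies the swap. Since $\vartheta^{geo}(\infty) = \chi(N^*)\dim V^1_f + \sum_i \dim V^1_{f_i}$ must coincide with $\chi'(M,\rho)$ by Theorem~\ref{thm:torsion_spec_top_dyn}, this yields $\Gamma(s) Z(s) = \Gamma(s) Z^{geo}(s)$ meromorphically. As Hurwitz/Riemann zeta have their only pole at $2s=1$, the identification forces all $S$-poles of $Z$ to cancel, leaving only the simple pole at $s=1/2$.

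For the functional equation~\eqref{eq:38} and the meromorphy of $Z^{dyn}$ I would compute $M(s)$ using $\vartheta^{dyn}$. Write $\vartheta^{dyn}(t) = C_0 t^{-1/2} + R(t)$ where $R(t)$ sums the nontrivial closed orbit terms. In the region $\Re(s) < 0$ where $Z^{dyn}$ converges absolutely (since $\ell(\gamma^n) \sim |n|$), the substitution $u = \ell(\gamma)^2/(4t)$ gives
$$\int_0^\infty \frac{e^{-\ell(\gamma)^2/4t}}{\sqrt{4\pi t}}\,t^{s-1}\,dt = \frac{\ell(\gamma)^{2s-1}}{2^{2s}\sqrt\pi}\,\Gamma\bigl(\tfrac{1}{2} - s\bigr),$$
hence term-by-term $\int_0^\infty R(t)\,t^{s-1}\,dt = 2^{-2s}\pi^{-1/2}\Gamma(\tfrac12 - s)\,Z^{dyn}(s)$. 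A cutoff argument at some $T>0$ shows that the two formally divergent integrals $\int_0^\infty C_0\,t^{s-3/2}\,dt$ and $\int_0^\infty \vartheta(\infty)\,t^{s-1}\,dt$ vanish under meromorphic continuation, their contributions from $(0,T)$ and $(T,\infty)$ being exactly opposite meromorphic functions of $s$. Thus $M(s) = 2^{-2s}\pi^{-1/2}\Gamma(\tfrac12-s) Z^{dyn}(s)$, which is~\eqref{eq:38}, and $Z^{dyn}$ is meromorphic.

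Finally, the special values follow from standard Mellin extraction applied to $\vartheta(t) = C_0 t^{-1/2} + \vartheta(\infty) + O(t^\infty)$, yielding $\mathrm{Res}_{s=1/2}(\Gamma(s) Z(s)) = C_0$ and $\mathrm{Res}_{s=0}(\Gamma(s) Z(s)) = -\vartheta(\infty)$. Dividing by $\Gamma(1/2) = \sqrt\pi$ and using $\mathrm{Res}_{s=0}\Gamma(s) = 1$, together with $\ell(f) = 2\pi$, gives $\mathrm{Res}_{1/2}(Z) = \dim V \cdot \chi(N)$ and $Z(0) = -\chi'(M,\rho)$. The remaining identities are then read off from \eqref{eq:38}: at $s=1/2$, $\Gamma(\tfrac12 - s)$ has a simple pole of residue $-1$, giving $\mathrm{Res}_{1/2}(Z) = -\tfrac{1}{2\pi}Z^{dyn}(1/2)$; at $s=0$, $\Gamma(s)$ has a simple pole of residue $1$, giving $\mathrm{Res}_0(Z^{dyn}) = Z(0)$. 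The main obstacle is precisely the rigorous handling of the meromorphic continuation across disjoint Mellin domains ($\Re(s) > 1/2$ for $\vartheta^{geo}$ versus $\Re(s) < 0$ for $\vartheta^{dyn}$), which forces careful tracking of the constant-loop term $C_0 t^{-1/2}$ through the continuation.
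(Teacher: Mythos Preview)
Your proposal is correct and follows essentially the same Mellin-transform strategy as the paper: extract the large-$t$ limit $\chi'(M,\rho)$ from the spectral side, the small-$t$ singular term $c_{-1/2}t^{-1/2}$ from the dynamical side, and then bridge the two convergence regions $\Re(s)>1/2$ and $\Re(s)<0$ by tracking how the pure-power subtractions continue meromorphically. The only packaging difference is that the paper subtracts \emph{both} singular pieces at once via
\[
\vartheta_0(t)=\vartheta(t)-c_0\,\mathbf{1}_{[1,\infty)}(t)-\tfrac{c_{-1/2}}{\sqrt t}\,\mathbf{1}_{(0,1]}(t),
\]
whose Mellin transform $I(s)$ is then entire on $\C$; this makes the ``main obstacle'' you flag disappear, since one simply evaluates the single entire function $I(s)$ in two ways rather than continuing across a gap. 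Your cutoff argument accomplishes the same thing, just with the bookkeeping distributed differently.
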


This extends results obtained in \cite{Rumin-Seshadri} in the three
dimensional case. Note also that Kitaoka computed in \cite{Kitaoka}
the spectral series $Z(s)$ on Lens spaces endowed with their symmetric
metric and found it reduces to a Hurwitz zeta function.

\subsection{Proof of Theorem~\ref{thm:zeta-funct-viewp-1}}
\label{sec:proof-theor-refthm:z}

\ \smallskip

We know from the spectral expression of $\vartheta$ that
\begin{displaymath}
  \vartheta(t) = \sum_{k=0}^n (-1)^k (n+1-k) \dim \ker (\mathbf{\Delta}_Q \mid
  \mathcal{E}^k) + 0 (e^{-Ct})  
\end{displaymath}
for some $C >0$ when $t \rightarrow +\infty$. According to
Proposition~\ref{prop:contact-complex-1} the limit constant is the
topological number $\chi'(M,\rho)$.

From $\vartheta = \vartheta^{geo}$, one gets using
$t\rightarrow +\infty$ that
\begin{equation}
  \label{eq:39}
  \chi'(M, \rho) = \chi(N^*) \dim \ker (\rho(f) - \id) + \sum_{i} \dim \ker
  (\rho(f_i)  - \id)\,.
\end{equation}
Whereas when $t \rightarrow 0^+$, the dynamical expression yields
\begin{displaymath}
  \vartheta(t) = \sqrt{\frac{\pi}{t}} \chi(N) \dim V + O(e^{-C/t}) \,.
\end{displaymath}

Let consider the function
\begin{displaymath}
  \vartheta_0 (t) = \vartheta(t) - c_0 \mathbf{1}_{[1,+\infty[}(t) -
  \frac{c_{-1/2}}{\sqrt t} \mathbf{1}_{]0,1]}(t) 
\end{displaymath}
with
\begin{displaymath}
  c_0 = \chi'(M,\rho) \quad \mathrm{and} \quad c_{-1/2} = \sqrt \pi \chi(N)
  \dim V \,.
\end{displaymath}
From the previous discussion, the integral
\begin{displaymath}
  I(s) = \int_0^{+\infty} \vartheta_0(t) t^{s-1} dt 
\end{displaymath}
coming from Mellin transform defines a holomorphic function over
$\C$. Writing
\begin{displaymath}
  \vartheta_0(t) = \vartheta(t) - c_0 + (c_0 - \frac{c_{-1/2}}{\sqrt t})
  \mathbf{1}_{]0,1]}(t) 
\end{displaymath}
one finds first for $\Re(s) > 1/2$ that
\begin{displaymath}
  I(s) = \Gamma(s) Z(s) + \frac{c_0}{s}- \frac{c_{-1/2}}{s-1/2} =  \Gamma(s)
  Z^{geo}(s) + \frac{c_0}{s}- \frac{c_{-1/2}}{s-1/2} \,.
\end{displaymath}
Using
\begin{displaymath}
  \vartheta_0(t) = \vartheta^{dyn}(t) - \frac{c_{-1/2}}{\sqrt t} +
  (\frac{c_{-1/2}}{\sqrt t}
  - c_0) \mathbf{1}_{]1,+\infty]}(t) 
\end{displaymath}
yields for $\Re(s) < 0$ that
\begin{displaymath}
  I(s) = \frac{2^{-2s}}{\sqrt \pi} \Gamma(\frac{1}{2} -s) Z^{dyn}(s) +
  \frac{c_0}{s} - \frac{c_{-1/2}}{s-1/2} \,.
\end{displaymath}
This proves Theorem~\ref{thm:zeta-funct-viewp-1}.

\subsection{The contact analytic torsion from the Lefschetz and
  dynamical viewpoints.}
\label{sec:lefsch-type-form}

\ \smallskip

Recall that from \eqref{eq:3} the contact analytic torsion is defined
by $T_Q(M,\rho) = \exp(-Z'(0)/2)$. We shall now compute it using the
previous formulae. This extends results obtained in
\cite{Rumin-Seshadri} in the three dimensional case.

\begin{cor}
  \label{cor:lefsch-type-form-2}
  Let $M$ be a CR Seifert manifold of any dimension endowed with a
  unitary representation $\rho : \pi_1 (M) \rightarrow U (N)$. Let
  \begin{displaymath}
    V^1_f = \ker (\rho(f) - \id) \quad \mathrm{and} \quad V^1_{f_i} = \ker
    (\rho(f_i) - \id) \,,
  \end{displaymath}
  and denote by $\rho(f)^\bot$ and $\rho(f_i)^\bot$ the restriction of
  these holonomies to respectively $(V^1_f)^\bot$ and
  $(V^1_{f_i})^\bot$.

  Then it holds that
  \begin{displaymath}
    T_Q(M,\rho) = (2\pi)^{\chi'(M,\rho)} |\det(\rho(f)^\bot -
    \id)|^{\chi(N^*)} \prod_i\frac{| \det (\rho(f_i)^\bot -
      \id)|}{\alpha_i^{\dim(V^1_{f_i}) }}\,.
  \end{displaymath}
  Moreover one has
  \begin{displaymath}
    -2 \ln (T_Q(M, \rho)) = Z'(0) = \lim_{s \rightarrow 0} \big(Z^{dyn}(s) +
    \frac{\chi'(M,\rho)}{s} \bigr)\,.
  \end{displaymath}
\end{cor}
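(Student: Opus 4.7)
The plan is to combine the geometric identity $Z = Z^{geo}$ and the functional equation of Theorem~\ref{thm:zeta-funct-viewp-1}. Since $T_Q(M,\rho) = \exp(-Z'(0)/2)$ by definition~\eqref{eq:3}, both assertions of the corollary reduce to evaluating $Z'(0)$: once directly from the Hurwitz/Riemann zeta expression $Z^{geo}$, and once via the dynamical side through the functional equation.

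For the explicit product formula, I would differentiate $Z^{geo}(s) = \chi(N^*)\, \chi_\rho^\zeta(f)(s) + \sum_i \chi_\rho^\zeta(f_i)(s)$ at $s=0$ term by term. Using Lerch's formula $\zeta'(0,x) = \log\Gamma(x) - \tfrac{1}{2}\log(2\pi)$ together with the reflection identity $\Gamma(x)\Gamma(1-x) = \pi/\sin(\pi x)$ gives $\zeta'(0,x) + \zeta'(0,1-x) = -\log(2\sin\pi x)$; combined with $|e^{2i\pi x} - 1| = 2|\sin\pi x|$ these terms reassemble into $\log|\det(\rho(\gamma)^\bot - \id)|$. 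The eigenvalue-$1$ contribution supplies $2\zeta'(0) = -\log(2\pi)$ times $\dim V_\gamma^1$. Summing over $\gamma = f$ and $\gamma = f_i$ and invoking the identity $\chi'(M,\rho) = \chi(N^*) \dim V_f^1 + \sum_i \dim V_{f_i}^1$ from \eqref{eq:39}, the $\log(2\pi)$ contributions collect into the $(2\pi)^{\chi'(M,\rho)}$ prefactor. Finally, differentiating the normalisation $\mathrm{order}(f_i)^{2s} = \alpha_i^{2s}$ at $s=0$, and using $\chi_\rho^\zeta(f_i)(0) = -\dim V_{f_i}^1$, produces the denominator $\alpha_i^{\dim V_{f_i}^1}$. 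Exponentiating $-Z'(0)/2$ yields the claimed product formula.

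For the dynamical identity, I would compare Laurent expansions at $s=0$ of both sides of the functional equation $\Gamma(s)\, Z(s) = \tfrac{2^{-2s}}{\sqrt{\pi}}\, \Gamma(\tfrac12 - s)\, Z^{dyn}(s)$. By Theorem~\ref{thm:zeta-funct-viewp-1}, $Z^{dyn}$ has a simple pole at $0$ with residue $-\chi'(M,\rho)$, so one may write $Z^{dyn}(s) = -\chi'(M,\rho)/s + R + O(s)$ where $R = \lim_{s\to 0}\bigl(Z^{dyn}(s) + \chi'(M,\rho)/s\bigr)$. Expanding $\Gamma(s)$ on the left, using $\Gamma(\tfrac12) = \sqrt{\pi}$ on the right, matching the principal parts (which recovers the already known $Z(0) = -\chi'(M,\rho)$) and then the constant terms yields $Z'(0)$ directly in terms of $R$, giving the dynamical identity.

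The main obstacle is careful bookkeeping of the Laurent expansions. On the geometric side the derivative at $s=0$ of the rescaling $\alpha_i^{2s}$ interacts with the singular Hurwitz term of eigenvalue $1$, and these must be paired precisely to produce the $\alpha_i^{\dim V_{f_i}^1}$ denominator. On the dynamical side one must track the interplay of the simple pole of $Z^{dyn}$ with the Taylor coefficients of $\Gamma(s)$ and of $\tfrac{2^{-2s}}{\sqrt{\pi}}\Gamma(\tfrac12-s)$ around $s=0$ so that the constant terms combine cleanly into the advertised limit. Once this bookkeeping is in place, Lerch's formula, the reflection identity, and the pole/residue data from Theorem~\ref{thm:zeta-funct-viewp-1} mechanically deliver both claims of the corollary.
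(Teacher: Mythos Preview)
Your proposal is correct and follows essentially the same route as the paper: differentiate $Z^{geo}$ at $s=0$ via Lerch's formula, the reflection identity, $\chi_\rho^\zeta(f_i)(0)=-\dim V^1_{f_i}$, and \eqref{eq:39} for the product formula, then extract the dynamical identity from the functional equation near $s=0$. The only difference is that the paper first multiplies \eqref{eq:38} by $\Gamma(s+\tfrac12)$ and uses the duplication and reflection formulae to recast it as $2\Gamma(2s)\cos(\pi s)\,Z(s)=Z^{dyn}(s)$, which streamlines the Laurent bookkeeping you anticipate since the prefactor on the dynamical side becomes $1$ at $s=0$ rather than a product of three Taylor series.
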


The first expression coincides with that found, via topological
methods, for the Reidemeister--Franz torsion by Fried
\cite[p.~198]{Fried}, in the case of an \emph{acyclic representation},
i.e. $H^*(M,\rho) = 0$. Therefore, the contact analytic torsion also
coincides with the (Riemannian) Ray--Singer analytic torsion in that
case, from works of Cheeger and M\"uller \cite{Cheeger,Muller}. The
only new factor for general representations is the cohomological term
$(2 \pi)^{\chi'(M,\rho)}$.

The dynamical expression for the analytic torsion also extends a
result proved by Fried in the acyclic case on Seifert manifolds
\cite{Fried87, Fried}.
We have there
\begin{displaymath}
  Z'(0)=  Z^{dyn}(0) = \sum_{\gamma\not= 0} \chi_\rho(\gamma)
  e(\gamma) /|\ell(\gamma)| \,, 
\end{displaymath}
which is known as the total twisted Fuller measure of periodic
orbits. It has a formal invariance by deformation of the flow, as long
as the orbit periods stay bounded; see \cite[Section~4]{Fried87}.

\begin{proof}[Proof of Corollary~\ref{cor:lefsch-type-form-2}]
  \
  
  Since $Z =Z^{geo}$ is an explicit function by
  Theorem~\ref{thm:zeta-funct-viewp-1}, we compute $(Z^{geo})'(0)$. We
  follow the proof in \cite[p.~771]{Rumin-Seshadri} that we recall for
  completeness.

  From \cite[p.~271]{WW}, one has for $x \in ]0,1[$,
  $\zeta(0, x) = \frac{1}{2} -x $, hence
  $\zeta(0,x) + \zeta(0,1-x) = 0$. Also by Lerch's formula
  $\partial_s \zeta(s,x)_{s=0} = \ln \Gamma(x) - \frac{1}{2} \ln
  (2\pi)$, it holds that $\zeta'(0) = - \frac{1}{2} \ln (2\pi)$ and
  \begin{align*}
    \partial_s \zeta(s,x) + \partial_s \zeta(s,1-x)
    &= \ln (\Gamma(x)\Gamma(1-x) /2\pi) = - \ln (2 \sin(\pi x)) \\
    & = -\ln |1 - e^{2i\pi x}| \,.
  \end{align*}
  Hence from \eqref{eq:36}, one finds that
  \begin{align*}
    \chi_\rho^\zeta (f)'(0)
    & = - 2 \sum_{e^{2i\pi x }\in \spec \rho(\gamma)^\bot}
      \dim V_f^x \ln | 1 - e^{2 i \pi x} | + \dim V^1_f \ln 2\pi \\
    & = - 2 \ln |\det ( \rho(f)^\bot - \id)| -2 \dim V^1_f \ln (2 \pi) \,,
  \end{align*}
  and for the exceptional fibers,
  \begin{align*}
    \chi_\rho^\zeta (f_i)'(0) + 2 \ln(\alpha_i) \chi_\rho^\zeta (f_i)(0) 
    & = - 2 \ln |\det ( \rho(f_i)^\bot - \id)| -2 \dim V^1_{f_i} \ln (2
      \pi) \,. 
  \end{align*}
  Summing these results using the formula \eqref{eq:36} for $Z^{geo}$
  and \eqref{eq:39} gives the first result for $T_Q(M,\rho)$.

  For the second one, it is useful to observe that
  \begin{equation}
    \label{eq:40}
    2 \Gamma(2s) \cos (\pi s) Z = Z^{dyn} \,,
  \end{equation}
  as comes by multiplying \eqref{eq:38} by $\Gamma(s+1/2)$ and using
  the classical identities
  \begin{displaymath}
    \Gamma(s) \Gamma(s + \frac{1}{2}) = 2^{1-2s} \sqrt \pi \Gamma(2s)\quad
    \mathrm{and} \quad \Gamma(s+ \frac{1}{2}) \Gamma(s - \frac{1}{2}) =
    \frac{\pi}{ \cos \pi s}\,.
  \end{displaymath}
  One knows from Theorem~\ref{thm:zeta-funct-viewp-1} that
  $Z(0) = - \chi'(M,\rho)$. Hence developing \eqref{eq:40} when
  $s\rightarrow 0$ gives the dynamical formula for $T_Q(M,\rho)$.
\end{proof}

\section{Results on the contact eta trace.}
\label{sec:results-cont-sign}

We now turn to the study of the contact eta invariant and trace as
introduced in Section~\ref{sec:around-contact-eta}. Following
\eqref{eq:9} we consider the contact signature operator on contact
manifolds of dimension $2n+1= 4k-1$
\begin{displaymath}
  S_Q =
  \begin{cases}
    *D + (d_Q + \delta_Q) \sigma \delta_Q & \mathrm{on}\
    \mathcal{E}^{2k-1}\\
    (d_Q+\delta_Q)\sigma (d_Q+\delta_Q) & \mathrm{on} \
    \bigoplus_{1\leq p\leq k-1 } \mathcal{E}^{2k-1-2p}
  \end{cases}
\end{displaymath}
where $\sigma= (-1)^p J$ on $\mathcal{E}^{2p}$ and
$\mathcal{E}^{2p-1}$. As claimed the advantage of this choice over
others lies in the following algebraic properties.
\begin{prop}
  \label{prop:S_Q}
  \

  $\bullet$ The operator $S_Q$ is symmetric on any contact manifold of
  dimension $4k-1$ endowed with a calibrated complex structure $J$.

  $\bullet$ If moreover $J$ is integrable and invariant by $T$, then
  it holds that $S_Q^2 = \mathbf{\Delta}_Q$ and
  \begin{displaymath}
    S_Q = \sigma T + P \ \mathrm{with}\ P\sigma = -\sigma P\
    \mathrm{and}\ [T,\sigma]= [P,T]= 0\,.
  \end{displaymath}

  $\bullet$ The non-zero spectrum of $S_Q$ splits as follows
  \begin{displaymath}
    \spec^*(S_Q) = \spec^*(*D) \bigsqcup_{0\leq p\leq
      k-1} \spec^* \bigl(\sigma \Delta_Q\mid \mathcal{E}^{2p} \bigr)\,. 
  \end{displaymath}
\end{prop}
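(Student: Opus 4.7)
The plan is to verify the three bullets in order, using the identities of Section~\ref{sec:revi-misc-form}, especially \eqref{eq:20}, \eqref{eq:23}, \eqref{eq:24}, and the commutations $[\sigma,\Delta_Q]=0$ and $\mathcal{L}_T\sigma = 0$ from Propositions~\ref{prop:Laplacian-zero-torsion} and \ref{prop:middle-degree-case-1}. A useful preliminary is that $J^{2}=(-1)^{a-b}$ on bidegree $(a,b)$, whence $\sigma^{2}=+1$ on even-degree forms and $\sigma^{2}=-1$ on odd-degree forms.

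For the symmetry of $S_Q$, on the lower block $D_Q\sigma D_Q$ is manifestly self-adjoint since $D_Q=d_Q+\delta_Q$ is formally self-adjoint and $\sigma$ is Hermitian on even forms. On the middle block, $*D$ is symmetric by Hodge duality $D^{*}=\pm *D*$, the sign being fixed by $\dim M=4k-1$; meanwhile the correction $(d_Q+\delta_Q)\sigma\delta_Q$ assembles with the boundary piece $d_Q\sigma(d_Q+\delta_Q)$ coming from the lower block's action across $\mathcal{E}^{2k-3}\to\mathcal{E}^{2k-1}$ to produce a self-adjoint cross-block contribution on $\mathcal{E}_S$ as a whole.

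For the second bullet, I would extract the $T$-linear part of $S_Q$ using \eqref{eq:20}. On the middle block, expanding $*D$ via the horizontal Hodge identity $*(\theta\wedge\cdot)=\pm J\cdot$ produces $\sigma\cdot i_T D = \sigma(T-d_Q\delta_Q^J)$, and on each lower block, expanding $D_Q\sigma D_Q$ and recognising $\delta_Q^J d_Q + d_Q\delta_Q^J = T$ peels off $\sigma T$. The remainder $P$ consists of second-order products of $d_Q,\delta_Q$ and their $J$-twists; tracking how $\sigma$ interacts with $\partial_Q$ and $\bar\partial_Q$ degree by degree (using that $\sigma=(-1)^{a}$ on $\mathcal{E}^{a,b}$ with $a+b$ even and $\sigma=i(-1)^{a}$ with $a+b$ odd) yields $P\sigma=-\sigma P$, while $[P,T]=0$ follows from $T$ commuting with $d_Q,\delta_Q,J$ when $\mathcal{L}_TJ=0$. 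Then $(\sigma T+P)^{2}=-T^{2}+P^{2}$ on odd forms (cross-terms cancelling by $P\sigma=-\sigma P$ and $[P,T]=0$), and a case-by-case verification using \eqref{eq:25} and the computation in Proposition~\ref{prop:middle-degree-case-1} gives $P^{2}=\mathbf{\Delta}_Q+T^{2}$, hence $S_Q^{2}=\mathbf{\Delta}_Q$.

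For the spectral splitting, I would produce a bijection between non-zero $S_Q$-eigenforms modulo $\mathcal{H}_S=\ker P$ and non-zero eigenforms of $\sigma\Delta_Q$ on even lower degrees together with $*D$-eigenforms on $\mathcal{E}^{2k-1}$. Given $\beta\in\mathcal{E}^{2p}$ with $\sigma\Delta_Q\beta=\mu\beta$, $\mu\neq 0$, set $\alpha=D_Q\beta\in\mathcal{E}_S$; then $S_Q\alpha = D_Q\sigma D_Q^{2}\beta = D_Q\sigma\Delta_Q\beta = \mu\alpha$ on the lower block, and the middle-block components when $p=k-1$ are absorbed by the correction $D_Q\sigma\delta_Q$ in the middle-block definition of $S_Q$ --- this is precisely the role of that correction term. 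Conversely, from a non-zero-eigenvalue $S_Q$-eigenform one recovers an even-form $\sigma\Delta_Q$-eigenform by applying $D_Q$. The residual middle-degree spectrum $\spec^{*}(*D)$ arises from eigenforms in $\ker\delta_Q\cap\mathcal{E}^{2k-1}$, on which $D_Q\sigma\delta_Q$ vanishes and $S_Q$ reduces to $*D$. The main obstacles I anticipate are (a) the careful bookkeeping of signs on the middle-block boundary where $D$ is second-order, and (b) verifying that the bijection $\beta\leftrightarrow D_Q\beta$ is non-degenerate and exactly complements the infinite-dimensional space $\mathcal{H}_S=\ker P$.
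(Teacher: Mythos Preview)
Your plan is essentially correct and hits the same key ideas as the paper's proof, but the ordering you propose for the second bullet is less efficient than the paper's, and one of your intermediate steps hides most of the work.

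The paper proceeds in the \emph{opposite} order for bullet two: it first computes $S_Q^2=\mathbf{\Delta}_Q$ directly, degree by degree, using only the structural identity $(d_Q+\delta_Q)^2=\Delta_Q$ together with $[\sigma,\Delta_Q]=0$ from Proposition~\ref{prop:Laplacian-zero-torsion}. For instance on $\mathcal{E}^{2k-1-2p}$ with $p\geq 2$,
\[
S_Q^2=(d_Q+\delta_Q)\sigma(d_Q+\delta_Q)^2\sigma(d_Q+\delta_Q)=(d_Q+\delta_Q)\sigma\Delta_Q\sigma(d_Q+\delta_Q)=\Delta_Q^2=\mathbf{\Delta}_Q,
\]
with similar short computations in degrees $2k-3$ and $2k-1$. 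Only \emph{after} this does the paper extract the $\sigma$-commuting part $(S_Q)^\sigma$, and here the clean trick you are missing is that $d_Q\sigma d_Q$ and $\delta_Q\sigma\delta_Q$ shift bidegree by $(\pm 1,\pm 1)$ (from \eqref{eq:23}), hence preserve $J$ and anticommute with $\sigma$ outright; this immediately leaves $(d_Q\sigma\delta_Q+\delta_Q\sigma d_Q)^\sigma$ on lower blocks and $(*D+d_Q\sigma\delta_Q)^\sigma$ on the middle block, both of which reduce to $\sigma T$ via \eqref{eq:20} and the identity $*D=\sigma T-\sigma^{-1}d_Q\sigma\delta_Q\sigma$ (this is \eqref{eq:41}, which you should derive first; it also gives symmetry of $*D$ cleanly). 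Your route---first peeling off $\sigma T$, then checking $P\sigma=-\sigma P$ bidegree by bidegree, then verifying $P^2=\mathbf{\Delta}_Q+T^2$ ``case-by-case''---is logically equivalent, but the final step is exactly as hard as computing $S_Q^2$ directly, so you gain nothing and lose the organizing principle $[\sigma,\Delta_Q]=0$.

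For the third bullet your bijection $\beta\mapsto(d_Q+\delta_Q)\beta$ is the same content as the paper's one-line intertwining $(d_Q+\delta_Q)S_Q=\Delta_Q\sigma(d_Q+\delta_Q)$ on $(\mathcal{E}^n_Q)^\perp$, where $\mathcal{E}^n_Q=\mathcal{E}^n\cap\ker\delta_Q$; on $\mathcal{E}^n_Q$ itself $S_Q=*D$ since the correction vanishes. Stating it as an intertwiner rather than a bijection avoids your worry (b) about non-degeneracy, since $(d_Q+\delta_Q)$ is injective off $\ker\Delta_Q$ on the even side and off $\mathcal{E}^n_Q$ on the odd side.
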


\begin{proof}
  $\bullet$ The first statement follows easily from the definition and
  the facts that by \eqref{eq:18}
  \begin{displaymath}
    (-1)^k*D = J(i_TD) = JT - J d_Q J^{-1}\delta_Q J  
  \end{displaymath}
  with $J^* = (-1)^pJ$ on $\mathcal{E}^p$ and $T^* = -T^J$ on
  calibrated $J$ by \eqref{eq:21}. Note that using the $\sigma$
  symmetry this writes
  \begin{equation}
    \label{eq:41}
    *D = \sigma T - \sigma^{-1}d_Q \sigma \delta_Q \sigma \,.
  \end{equation}

  $\bullet$ By Proposition~\ref{prop:Laplacian-zero-torsion} one knows
  that $\Delta_Q$ commutes with $\sigma$ on $\mathcal{E}^p$ for
  $p< n= 2k-1$ on CR manifolds with transverse symmetry. Then using
  that the sequence $(d_Q,D)$ is a complex, one finds on
  $\mathcal{E}^{2k-1-2p}$ for $p\geq 2$ that
  \begin{align*}
    S_Q^2
    & = (d_Q + \delta_Q) \sigma (d_Q + \delta_Q)^ 2 \sigma (d_Q +
      \delta_Q) \\
    & = (d_Q + \delta_Q) \sigma \Delta_Q \sigma (d_Q + \delta_Q) =
      (d_Q + \delta_Q) \Delta_Q (d_Q + \delta_Q ) \\
    & = \Delta_Q^2 = \mathbf{\Delta}_Q \,.
  \end{align*}
  In degree $2k-3$
  \begin{align*}
    S_Q^2
    & = (*D
      + (d_Q+ \delta_Q) \sigma \delta_Q) (d_Q \sigma d_Q)  \\
    & +  (d_Q + \delta_Q) \sigma (d_Q+ \delta_Q)( (d_Q+
      \delta_Q) \sigma \delta_Q +  \delta_Q \sigma d_Q) \\
    & = (d_Q+\delta_Q)\sigma \Delta_Q \sigma d_Q +(d_Q+ \delta_Q) \sigma
      \Delta_Q \sigma \delta_Q \\
    & = (d_Q+\delta_Q) \Delta_Q d_Q + (d_Q + \delta_Q) \Delta_Q \delta_Q=
      \Delta_Q^2 = \mathbf{\Delta}_Q
    \\
  \end{align*}
  In degree $2k-1$
  \begin{align*}
    S_Q^2
    & = (*D)^2 + ((d_Q + \delta_Q) \sigma \delta _Q)( d_Q \sigma \delta_Q) +
      (d_Q+ \delta_Q) \sigma (d_Q + 
      \delta_Q )\delta_Q\sigma \delta_Q\\
    & = D^* D + (d_Q + \delta_Q) \sigma \Delta_Q \sigma \delta_Q \\
    & = D^* D + (d_Q + \delta_Q) \Delta_Q \delta_Q = D^* D +
      (d_Q\delta_Q)^2 = \mathbf{\Delta}_Q \,.
  \end{align*}

  We determine the invariant part $(S_Q)^\sigma$ of $S_Q$ through
  $\sigma$. One observes first from \eqref{eq:23} that
  $d_Q \sigma d_Q$ adds $(1,1)$ to the bidegree, hence preserves $J$
  and anti-commutes with $\sigma$. The same holds for
  $\delta_Q \sigma \delta_Q$. Therefore
  \begin{equation}
    \label{eq:42}
    (S_Q)^\sigma =
    \begin{cases}
      (*D + d_Q \sigma \delta_Q)^\sigma & \mathrm{on} \ \mathcal{E}^{2k-1}\\
      (d_Q \sigma \delta_Q + \delta_Q \sigma d_Q)^\sigma & \mathrm{on}
      \ \bigoplus_{1\leq p \leq k-1} \mathcal{E}^{2k-1 - 2p} \,.
    \end{cases}
  \end{equation}
  Then \eqref{eq:41} gives in degree $2k-1$ that
  \begin{displaymath}
    (S_Q)^\sigma = (\sigma T - \sigma^{-1}d_Q \sigma \delta_Q \sigma + d_Q
    \sigma \delta_Q)^\sigma = \sigma T \,. 
  \end{displaymath}

  In degree $2k-1 - 2p$ with $p\geq 1$, we know from \eqref{eq:20}
  that $ T = \delta_Q^J d_Q + d_Q \delta_Q^J$, thus
  \begin{displaymath}
    \sigma T = \delta_Q \sigma d_Q + \sigma^{-1} d_Q \sigma \delta_Q \sigma \,.
  \end{displaymath}
  By \eqref{eq:42}, we look for
  \begin{displaymath}
    (d_Q \sigma \delta_Q + \delta_Q \sigma d_Q)^\sigma
    = (\sigma T - \sigma^{-1} d_Q \sigma \delta_Q \sigma + d_Q
    \sigma \delta_Q)^\sigma = \sigma T \,,
  \end{displaymath}
  as needed.
  
  $\bullet$ We come back to the proof of Proposition. To relate the
  spectrum of $S_Q$ to the one of $*D$ we first observe that $S_Q= *D$
  on $\mathcal{E}^n_Q= \mathcal{E}^n \cap \ker \delta_Q$. Then
  \begin{displaymath}
    \spec^*(S_Q) = \spec^*(*D) \bigsqcup \spec^* (S_Q \mid
    (\mathcal{E}^n_Q)^\bot )\,.
  \end{displaymath}
  Moreover on $ (\mathcal{E}^n_Q)^\bot$,
  $(d_Q+\delta_Q) S_Q = \Delta_Q \sigma (d_Q + \delta_Q)$, with the
  convention that $d_Q = 0$ in degree $n$.  Therefore
  \begin{displaymath}
    \spec^* (S_Q \mid (\mathcal{E}^n_Q)^\bot ) = \bigsqcup_{0\leq p\leq
      k-1} \spec^* \bigl(\sigma \Delta_Q\mid \mathcal{E}^{2p} \bigr)\,. 
  \end{displaymath}
\end{proof}

From this, we can compare the eta trace functions of $S_Q$ and
$*D$. It holds on CR Seifert manifolds that
\begin{equation}
  \label{eq:43}
  \eta(S_Q)(s) = \eta(*D)(s) + \sum_{0\leq p\leq q} \eta(\sigma \Delta_Q
  \mid \mathcal{E}^{2p}) (s)\,.
\end{equation}
Moreover $\sigma \Delta_Q$ splits through bidegree and
\begin{displaymath}
  \eta(\sigma \Delta_Q \mid \mathcal{E}^{2p})(s) = \sum_{a+b = 2p}
  (-1)^p i^{a-b}
  \zeta(\Delta_Q \mid \mathcal{E}^{a,b})(s) \,.
\end{displaymath}
For positive hypoelliptic operators like $\Delta_Q$, one knows that
\begin{displaymath}
  \zeta(\Delta_Q)(0) + \dim \ker \Delta_Q 
\end{displaymath}
is the constant term in the development of $e^{-t \Delta_Q}$ and is
given by the integral over $M$ of some universal polynomial of local
invariant of the pseudo-hermitian metric; see \cite[Section
3.1]{Rumin-Seshadri} or \cite{Albin-Quan} for discussion and
references. Moreover the kernels of $\Delta_Q$ are isomorphic to the
cohomology groups $H^*(M,\rho)$. Hence we finally get the following
\begin{prop}
  \label{prop:eta-etaS_Q}
  On a CR Seifert manifold of given dimension $4k-1$, it holds that
  \begin{displaymath}
    \eta(S_Q)(0) - \eta(*D)(0) + \sum_{0 \leq i+j=  2p \leq 2 (k-1)}
    (-1)^p i^{a-b} \dim H^{a,b}(M,\rho)  
  \end{displaymath}
  is the integral over $M$ of some universal polynomial of local
  invariants of the metric.
\end{prop}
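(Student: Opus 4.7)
The plan is to start from the spectral decomposition~\eqref{eq:43} and reduce each remaining contribution at $s=0$ to an integral of a local density modulo a topological defect coming from the kernel of $\Delta_Q$.

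First I would use that the sum in \eqref{eq:43} ranges over $0\le p\le k-1$, so $2p\le 2(k-1)<n=2k-1$, which places us in the degree range where Proposition~\ref{prop:Laplacian-zero-torsion} applies: $\Delta_Q$ commutes with $J$ and therefore preserves the bidegree decomposition $\mathcal{E}^{2p}=\bigoplus_{a+b=2p}\mathcal{E}^{a,b}$. Since $\sigma=(-1)^p J$ acts on $\mathcal{E}^{a,b}$ as the scalar $(-1)^p i^{a-b}$, the splitting of $\eta(\sigma\Delta_Q\mid\mathcal{E}^{2p})$ already recorded in the excerpt gives
\[
\eta(S_Q)(s)-\eta(*D)(s)=\sum_{0\le p\le k-1}\ \sum_{a+b=2p}(-1)^p i^{a-b}\,\zeta(\Delta_Q\mid\mathcal{E}^{a,b})(s).
\]

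Next I would evaluate at $s=0$. In the degree range $2p<n$ the operator $\Delta_Q$ coincides with the usual second-order positive hypoelliptic Laplacian $d_Q\delta_Q+\delta_Q d_Q$. The Heisenberg pseudodifferential calculus then yields a local small-time heat expansion whose constant term is the integral over $M$ of a universal polynomial $\omega_{a,b}(g)$ in the pseudo-Hermitian invariants of the metric, so that
\[
\zeta(\Delta_Q\mid\mathcal{E}^{a,b})(0)+\dim\ker(\Delta_Q\mid\mathcal{E}^{a,b})=\int_M \omega_{a,b}(g),
\]
as recalled in \cite[Section~3.1]{Rumin-Seshadri} and \cite{Albin-Quan}. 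Combined with the Hodge-theoretic identification $\ker(\Delta_Q\mid\mathcal{E}^k)\cong H^k(M,\rho)$ (which follows from Proposition~\ref{prop:contact-complex-1} and self-adjointness) and its refinement by $J$-equivariance to $\ker(\Delta_Q\mid\mathcal{E}^{a,b})\cong H^{a,b}(M,\rho)$, summation over the relevant bidegrees and rearrangement produce exactly the announced identity, whose right-hand side is then an integral of a universal polynomial of local metric invariants.

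The main obstacle is genuinely the locality of the constant term $\zeta(\Delta_Q)(0)+\dim\ker\Delta_Q$; because $\Delta_Q$ is only hypoelliptic, this is a nontrivial input, but it is precisely the fact established via the Heisenberg calculus in the references above and may be cited rather than re-derived. A minor point to double-check is that the Hodge decomposition respects the bidegree $(a,b)$ in this range, which is clear from the commutation of $\Delta_Q$, $d_Q$ and $\delta_Q$ with $J$ guaranteed by Proposition~\ref{prop:Laplacian-zero-torsion}.
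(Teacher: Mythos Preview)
Your proposal is correct and follows essentially the same route as the paper: use the spectral splitting \eqref{eq:43} together with the bidegree decomposition of $\sigma\Delta_Q$ (valid since $2p<n$ so Proposition~\ref{prop:Laplacian-zero-torsion} applies), then invoke the locality of $\zeta(\Delta_Q)(0)+\dim\ker\Delta_Q$ from the Heisenberg heat calculus and the Hodge identification of $\ker\Delta_Q$ with cohomology. The only additions you make are the explicit justification of the bidegree refinement $H^{a,b}(M,\rho)$ via $J$-equivariance and the careful bookkeeping of the degree range, both of which the paper leaves implicit.
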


\begin{rems}
  \label{rems:SQ_eta}
  In dimension three with a trivial representation $\rho$,
  $H^0(M,\rho) = \R$ is given by the constant functions. Hence the
  cohomological sum is $1$ and $\eta(S_Q)(0) - \eta(*D)(0)$ is
  \emph{not} given by a local term a priori. The two eta invariants
  are not equivalent up to local terms.

  Note also that in dimension $3$, one has $S_Q = *D + d_Q \delta_Q$,
  as studied for instance in previous work \cite{BHR}, but the
  expression differs in higher dimensions.
\end{rems}

The advantage of working with $S_Q$ instead of $*D$ or
$*D \pm d_Q\delta_Q$ in general lies in its spectral symmetry with
respect to $\sigma$.

\subsection{The torsion contact trace from the topological viewpoint.}
\label{sec:tors-cont-trace}

\ \smallskip

We start the study of the eta trace spectral series involved in the
eta invariant $S_Q$. From \eqref{eq:10} it reads
\begin{displaymath}
  \vartheta_S (t) = \tr (\sqrt t S_Q e^{-t\mathbf{\Delta}_Q})\,.
\end{displaymath}

\smallskip

We complete arguments already sketched in
Section~\ref{sec:contact-eta-trace}. From Proposition~\ref{prop:S_Q},
one has $\sigma P S_Q = -S_Q \sigma P$ with $\sigma P = -P
\sigma$. Therefore the spectrum of $S_Q = P + \sigma T$ is symmetric
with respect to zero, except on $\mathcal{H}_S = \ker P$ and the eta
trace $\vartheta_S$ retracts on it with $S_Q = \sigma T$. Hence
\begin{displaymath}
  \mathbf{\Delta}_Q = S_Q^2= -T^2 \quad
  \mathrm{on} \quad \mathcal{H}_S \,. 
\end{displaymath}
One can split the domain $\mathcal{E}_S$ of $S_Q$ and $\mathcal{H}_S$
with respect to the action of the involution $\tau= i\sigma = \pm 1
$. We have
\begin{displaymath}
  \vartheta_S(t)  = -\sqrt t(\tr (iT e^{tT^2} \mid \mathcal{H}_S^+) - \tr(iT
  e^{tT^2 } \mid \mathcal{H}_S^-))\,.
\end{displaymath}
On CR Seifert manifolds, the $V$-valued forms in $\mathcal{H}_S$ split
under Fourier decomposition as in
Sections~\ref{sec:from-spectr-topol}--\ref{sec:index-computations}. Hence
we obtain the identity.
\begin{thm}
  \label{thm:eta=eta_top}
  One has on CR Seifert manifolds
  \begin{displaymath}
    \vartheta_S(t) = \vartheta_S^{top}(t) =- \sqrt t \sum_{\lambda
      \spec(iT)} \ind( P^+ \mid V_\lambda) \lambda e^{-t\lambda^2} \,,
  \end{displaymath}
  where $P^+ = P : \mathcal{E}_S^+ \rightarrow \mathcal{E}_S^-$.
\end{thm}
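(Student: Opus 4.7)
The plan is to trace carefully through the three reductions announced in Section~\ref{sec:contact-eta-trace}: first from the full spectral trace to $\mathcal{H}_S=\ker P$, then from $\mathcal{H}_S$ to its $\tau$-decomposition, and finally from that decomposition to an index series via Fourier decomposition along the Reeb circle action.

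First I would use Proposition~\ref{prop:S_Q} to argue that $\vartheta_S(t)$ localises on $\mathcal{H}_S=\ker P$. Since $S_Q=\sigma T+P$ with $[T,\sigma]=[T,P]=0$ and $\sigma P=-P\sigma$, the operator $\sigma P$ anticommutes with $S_Q$ and preserves the spectrum of $\mathbf{\Delta}_Q=S_Q^2$. Hence on $(\ker P)^\perp$, $\sigma P$ restricts to an involutive isomorphism pairing each eigenvector of $S_Q$ of eigenvalue $\mu$ with one of eigenvalue $-\mu$, and the contributions to $\tr(\sqrt t\,S_Qe^{-t\mathbf{\Delta}_Q})$ cancel. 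On $\mathcal{H}_S$ we have $S_Q=\sigma T$ and $\mathbf{\Delta}_Q=-T^2$, giving the announced formula
\begin{displaymath}
  \vartheta_S(t)=-\sqrt t\bigl(\tr(iT\,e^{tT^2}\mid\mathcal{H}_S^+)-\tr(iT\,e^{tT^2}\mid\mathcal{H}_S^-)\bigr),
\end{displaymath}
where the signs come from $\tau=i\sigma=\pm 1$ on $\mathcal{H}_S^\pm$, i.e.\ $\sigma=\mp i$ there, so $\sigma T=\mp iT$.

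Next I would identify the dimension difference inside each Fourier component as an index of $P^+$. Since $P$ is self-adjoint (as both $S_Q$ and $\sigma T$ are self-adjoint under the invariance $\mathcal{L}_T J=0$) and anticommutes with $\sigma$, hence with $\tau$, it restricts to
\begin{displaymath}
  P^+:\mathcal{E}_S^+\longrightarrow\mathcal{E}_S^-,\qquad (P^+)^*=P^-:\mathcal{E}_S^-\longrightarrow\mathcal{E}_S^+.
\end{displaymath}
Thus $\ker P^+=\mathcal{H}_S^+$ and $\ker(P^+)^*=\mathcal{H}_S^-$. Moreover $T$ commutes with $\sigma$, $P$ and $\mathbf{\Delta}_Q$, so it preserves $\mathcal{H}_S^\pm$ and the Fourier splitting $\mathcal{H}_S^\pm=\bigoplus_\lambda \mathcal{H}_S^\pm\cap\mathbf V_\lambda$ from \eqref{eq:29} is preserved by $P^+$. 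Therefore $\dim(\mathcal{H}_S^+\cap\mathbf V_\lambda)-\dim(\mathcal{H}_S^-\cap\mathbf V_\lambda)=\ind(P^+\mid\mathbf V_\lambda)$, which by the interpretation of $\mathbf V_\lambda=\Gamma(N,V_\lambda)$ as sections of a $V$-bundle over the orbifold $N$ (as in Section~\ref{sec:index-computations}) is the index $\ind(P^+\mid V_\lambda)$. Since $iT=\lambda$ on $\mathbf V_\lambda$, substituting gives
\begin{displaymath}
  \vartheta_S(t)=-\sqrt t\sum_{\lambda\in\spec(iT)}\lambda\,e^{-t\lambda^2}\,\ind(P^+\mid V_\lambda),
\end{displaymath}
which is $\vartheta_S^{top}(t)$.

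The main obstacle is really the initial symmetry argument: one has to check that on $(\ker P)^\perp$ the operator $\sigma P$ is not only formally anticommuting with $S_Q$ but actually provides an honest involution that pairs eigenspaces and that the trace class structure of $\sqrt t\,S_Qe^{-t\mathbf{\Delta}_Q}$ is preserved by this pairing. Hypoellipticity of $\mathbf{\Delta}_Q$ guarantees discrete spectrum and smooth heat kernels, so one may organise the trace over joint eigenspaces of $\mathbf{\Delta}_Q$, $\tau$, and $T$; on each such finite-dimensional space the cancellation is elementary linear algebra. A minor side point to verify is that all operators $S_Q$, $P$, $\sigma$, $T$ respect the domain $\mathcal{E}_S$, so that $P^+$ really acts as stated. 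Once these are in place the rest is bookkeeping and the index interpretation on $V_\lambda$ follows from the construction recalled in Section~\ref{sec:index-computations}.
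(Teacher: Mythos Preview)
Your proof is correct and follows essentially the same route as the paper: use the anticommutation $\sigma P\,S_Q=-S_Q\,\sigma P$ from Proposition~\ref{prop:S_Q} to cancel the trace off $\mathcal{H}_S=\ker P$, reduce to $S_Q=\sigma T$ and $\mathbf{\Delta}_Q=-T^2$ there, split by $\tau=i\sigma$, and then Fourier decompose along the Reeb action to turn the difference of traces into the index series. Your write-up is in fact more explicit than the paper's (which largely refers back to the sketch in Section~\ref{sec:contact-eta-trace}); the only imprecision is calling $\sigma P$ ``involutive'' --- it is merely an isomorphism on $(\ker P)^\perp$ intertwining the $\mu$ and $-\mu$ eigenspaces of $S_Q$, which is all that is needed.
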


We shall now compute the index involved.

\subsection{From the topological to the geometric expression for
  $\vartheta_S$.}
\label{sec:from-topol-geom}

\ \smallskip

Let $*_H$ denotes the $*$ operator on the Hermitian space $H$. From
\cite[Thm. 2]{Weil} one has
\begin{displaymath}
  *_H L^r \alpha = L^r((-1)^{p(p+1)/2} J \alpha) = L^r(\sigma \alpha)
\end{displaymath}
on $\Omega^p_0 H = \mathcal{E}^p$ when $p+2r = n$.  That means that
the involution $\tau = i\sigma$ on $\mathcal{E}_S$ is conjugated to
$\tau_H = i*_H$ on $\Omega^nH$ through the Lefschetz decomposition
\begin{displaymath}
  \mathcal{E}_S \simeq \Omega^n H\quad \mathrm{with} \quad
  \mathcal{E}_S^{\tau = \pm 1} \simeq (\Omega^nH)^{\tau_H = \pm 1} \,.
\end{displaymath}
From these isomorphisms
$P^+ : \Omega^{n,+} H \rightarrow \Omega^{n,-}H$.  Hence its elliptic
symbol class as seen from $N$ is associated to the signature index on
$N$; see \cite[Section~6]{At-Pa-SiIII}. Let then
$\mathcal{L}(N)_{orb}$ denotes the Hirzebruch $L$-genus of the
orbifold $N$. As in Section~\ref{sec:index-computations} we have by
Kawasaki index theorem and \cite{At-Pa-SiIII}
\begin{displaymath}
  \ind(P^+ \mid V_\lambda) =  \langle \ch(V_\lambda) \wedge
  \mathcal{L}(N)_{orb}, [N] \rangle_{orb} \,,
\end{displaymath}
and $\vartheta_S(t)$ can be written
\begin{equation}
  \label{eq:44}
  \vartheta_S(t) = - \langle  \ch(V)^{\theta}_{odd}(t)\wedge
  \mathcal{L}(N)_{orb}, [N] \rangle_{orb} 
\end{equation}
using the notation
\begin{equation}
  \label{eq:45}
  \ch(V)^{\theta}_{odd}(t) = \sum_{\lambda \in \spec(iT)} \sqrt t \lambda
  \ch(V_\lambda)e^{-t \lambda^2} \,.
\end{equation}
We need to study this $\theta$-regularised Chern character of $V$ as
seen from $N$. It is also useful to consider its even version
\begin{equation}
  \label{eq:46}
  \ch(V)^{\theta}_{ev}(t) = \sum_{\lambda \in \spec(iT)}
  \ch(V_\lambda)e^{-t \lambda^2} \,.
\end{equation}
Note that its zero degree part already showed up in \eqref{eq:31} in
the study of the topological series for the torsion function, that
writes
\begin{displaymath}
  \vartheta^{top} (t) = \langle \ch^\theta_{ev}(V)(t) \wedge e(N)_{orb} , [N]
  \rangle_{orb} \,.
\end{displaymath}
From \eqref{eq:44} we need now to compute these differential forms in
their whole.

\smallskip

To compute the Chern character of $V_\lambda$, we use the following
twisted connection
\begin{displaymath}
  \nabla^\lambda = \nabla^\rho + i \lambda \theta \,,
\end{displaymath}
where $\nabla^\rho$ is the flat connection on $V$. Since
$\nabla^\lambda_T s = 0$ on sections of $\mathbf{V}_\lambda$, this
connection goes down on $M$ as a connection on $V_\lambda$. Its
curvature form is given for $X,Y \in H$ by
\begin{align*}
  R_{\nabla^\lambda}(X,Y)
  & = \nabla^\lambda_X \nabla^\lambda_Y - \nabla^\lambda_Y \nabla^\lambda_X
    - \nabla^\lambda_{[X,Y]} \\ 
  & = R_{\nabla^\rho} (X,Y) + i \lambda d\theta(X,Y) \\
  & = i \lambda d\theta(X,Y)\,.
\end{align*}
Hence the bundle $(V_\lambda, \nabla_\lambda)$ has curvature form
$\Omega_\lambda= i \lambda d\theta \otimes \id_{V_\lambda}$. Let then
\begin{displaymath}
  \mathbf{c} = -\frac{d \theta}{2\pi} \,.
\end{displaymath}
Note that $\c = c_1(L)$ is the first Chern class of the line bundle
$L $ over $N$ whose circle bundle is $M$. Using the $V$-bundle
structure of $V_\lambda$ as given in \eqref{eq:30}, one obtains that
\begin{displaymath}
  \ch (V_\lambda) = \tr(e^{\frac{i \Omega_\lambda}{2\pi}}) = \dim V^x
  e^{\lambda \c}  
\end{displaymath}
over smooth points of $N$, while the equivariant Chern character at
$f_j^r$ is
\begin{displaymath}
  \ch(V_\lambda)(f_j^r) = e^{- 2i \pi r\lambda/\alpha_j }
  \chi_\rho(f_j^r \mid V^x)\,.
\end{displaymath}

This leads to the geometrical and dynamical expressions for the
$\theta$-regularised Chern characters $\ch(V)^\theta_{ev}$ and
$\ch(V)^\theta_{odd}$ using the classical Jacobi theta function
$\theta(x,t) = \sum_{n \in \Z} e^{-t(n+x)^2}$ and Poisson formula
\eqref{eq:34}.
\begin{prop}
  \label{prop:ch_theta}
  $\bullet$ Over smooth orbits it holds that
  \begin{align*}
    \ch(V)^\theta_{ev} (t)
    & = \sum_{\lambda \in \spec(iT)} \dim V^x e^{\lambda \c - t
      \lambda^2} \\
    & = \sum_{e^{2i\pi x} \in \spec (\rho(f))}\dim V^x
      e^{\c^2/4t}\theta (x - \frac{\c}{2t},t) \\
    & = \frac{\ell(f)}{\sqrt{4\pi t}} \sum_{n \in \Z} \chi_\rho(f^n) e^{-
      (\ell(f^n) + i \c)^2/ 4t}
  \end{align*}
  and
  \begin{align*}
    \ch(V)^\theta_{odd}(t)
    & = \sqrt t \sum_{\lambda \in \spec(iT)} \dim
      V^x \lambda e^{\lambda \c -t \lambda^2} \\
    & = \sqrt t \frac{d}{d\c} \ch(V)^\theta_{ev} (t)\\
    & = - \frac{i \ell(f)}{4t\sqrt{\pi}} \sum_{n \in \Z}
      \chi_\rho(f^n)(\ell(f^n)  + i \c) e^{- (\ell(f^n) + i \c)^2/ 4t} , 
  \end{align*}
  in terms of formal derivation as a polynomial in $\c$.

  $\bullet$ At a singular orbit $f_j^r$, the equivariant
  $\theta$-Chern characters writes
  \begin{align*}
    \ch(V)^\theta_{ev}(f_j^r)(t)
    & = \sum_{\lambda \in \spec(iT)} \chi_\rho(f_j^r \mid V^x) e^{-2i\pi r
      \lambda/\alpha_j -t \lambda^2}\\
    & = \sum_{e^{2i\pi x }\in \spec(\rho(f))} \chi_\rho(f_j^r \mid V^x)
      e^{-\pi^2 r^2/t\alpha_j^2} \theta(x  + i
      \frac{\pi r}{t \alpha_j},t) \\ 
    & = \frac{\alpha_j \ell(f_j)}{\sqrt{4 \pi t}}\sum_{n \in \Z}
      \chi_\rho(f_j^{r + n \alpha_j}) e^{-\ell(f_j^{r+ n\alpha_i})^2/4t }
  \end{align*}
  and
  \begin{align*}
    \ch(V)^\theta_{odd}(f_j^r)(t)
    & = \sqrt t \sum_{\lambda \in \spec(iT)} \chi_\rho(f_j^r \mid V^x)
      \lambda e^{-2i\pi r \lambda/\alpha_j -t \lambda^2}\\
    & = - \frac{i \alpha_j \ell(f_j)}{4t \sqrt{\pi}}\sum_{n \in \Z}
      \chi_\rho(f_j^{r+ n \alpha_j})\ell(f_j^{r + n \alpha_j}) 
      e^{- \ell(f_j^{r + n\alpha_j})^2/ 4t}  \,.
  \end{align*}
\end{prop}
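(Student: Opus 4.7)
The plan is to substitute the explicit formulas for the (equivariant) Chern character of $V_\lambda$ just computed into the definitions \eqref{eq:45} and \eqref{eq:46}, decompose the sum via $\spec(iT) = \bigsqcup_{e^{2i\pi x}\in \spec\rho(f)} (x+\Z)$ from \eqref{eq:29}, and identify the inner sum successively as a Jacobi theta function (the geometric form) and its Poisson dual (the dynamical form). Throughout, $\c = -d\theta/(2\pi)$ is treated as a formal nilpotent variable on $N$, so that $e^{\lambda \c}$ and $e^{\c^2/4t}$ are polynomials. The implicit normalization $\ell(f) = 2\pi$ (the period of the circle action generated by $T$, so that $\spec(iT) \subset x + \Z$) makes the Poisson prefactor $\sqrt{\pi/t}$ agree with $\ell(f)/\sqrt{4\pi t} = \alpha_j \ell(f_j)/\sqrt{4\pi t}$ in the final expressions.

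For the even series at a smooth orbit, inserting $\ch(V_\lambda) = \dim V^x e^{\lambda \c}$ and writing $\lambda = n+x$ gives an inner sum $\sum_{n\in\Z} e^{(n+x)\c - t(n+x)^2}$; completing the square $(n+x)\c - t(n+x)^2 = -t((n+x) - \c/(2t))^2 + \c^2/(4t)$ produces $e^{\c^2/(4t)} \theta(x - \c/(2t), t)$, which is the second form. Applying Poisson \eqref{eq:34} and combining the surviving $e^{\c^2/(4t)}$ with the emergent phase $e^{-i\pi n \c/t}$ and the discrete Gaussian $e^{-\pi^2 n^2/t}$ reassembles into $e^{-(\ell(f^n) + i\c)^2/(4t)}$ (using $\ell(f^n) = 2\pi n$), while $\sum_x \dim V^x e^{2i\pi nx} = \chi_\rho(f^n)$ delivers the character. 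The singular even case follows the same template with $\c$ replaced formally by $-2i\pi r/\alpha_j$: completion of the square gives $e^{-\pi^2 r^2/(t\alpha_j^2)} \theta(x + i\pi r/(t\alpha_j), t)$, and Poisson produces the exponent $-\pi^2(m+r/\alpha_j)^2/t = -\ell(f_j^{r+m\alpha_j})^2/(4t)$, with the coefficient identification $\sum_x \chi_\rho(f_j^r \mid V^x) e^{2i\pi mx} = \chi_\rho(f_j^{r+m\alpha_j})$ following from $f_j^{m\alpha_j} \sim f^m$ and $\rho(f) = e^{2i\pi x}$ on $V^x$.

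For the odd series at a smooth orbit, the identity $\lambda e^{\lambda \c} = (d/d\c) e^{\lambda \c}$ yields the formal-differentiation identity $\ch(V)^\theta_{odd}(t) = \sqrt t \, (d/d\c) \, \ch(V)^\theta_{ev}(t)$ at the level of the spectral sum; applying it to the dynamical expression via $(d/d\c) e^{-(\ell(f^n)+i\c)^2/(4t)} = -i(\ell(f^n)+i\c)/(2t) \cdot e^{-(\ell(f^n)+i\c)^2/(4t)}$ produces the third form. At a singular orbit no $\c$ appears in $\ch(V_\lambda)(f_j^r) = e^{-2i\pi r\lambda/\alpha_j} \chi_\rho(f_j^r\mid V^x)$, so the derivative shortcut is unavailable and I would redo Poisson directly on $\sum_n (n+x) e^{-2i\pi r(n+x)/\alpha_j - t(n+x)^2}$. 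The Fourier transform of $y \mapsto y\, e^{-ty^2 - 2i\pi a y}$ is computed by completing the square, translating the integration contour, and using $\int u e^{-tu^2} du = 0$ together with $\int e^{-tu^2} du = \sqrt{\pi/t}$; this produces an additional factor $-i\pi(m+a)/t \cdot \sqrt{\pi/t}\, e^{-\pi^2(m+a)^2/t}$ (with $a = r/\alpha_j$). The same coefficient bookkeeping as in the even case then delivers the claimed dynamical series.

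The main obstacle is pure bookkeeping: propagating the Poisson shifts through the exponent and combining all Gaussian, phase, and prefactor terms into the clean form $-\ell(\gamma)^2/(4t)$ or $-(\ell(\gamma)+i\c)^2/(4t)$. The singular odd case is the most delicate point, since, unlike its smooth counterpart, it cannot be obtained from the even case by formal differentiation in $\c$ and requires redoing the Fourier--Gaussian computation with the extra linear factor.
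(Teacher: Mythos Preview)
Your proposal is correct and follows exactly the route the paper takes: the paper merely states that the proposition follows from inserting the just-computed (equivariant) Chern characters $\ch(V_\lambda)=\dim V^x e^{\lambda\c}$ and $\ch(V_\lambda)(f_j^r)=e^{-2i\pi r\lambda/\alpha_j}\chi_\rho(f_j^r\mid V^x)$ into the definitions and applying the classical Poisson formula \eqref{eq:34}, and you have simply filled in those details (completion of the square, Poisson, and the character identification $\sum_x\dim V^x e^{2i\pi nx}=\chi_\rho(f^n)$). One minor remark: the singular odd case can in fact also be obtained by formally differentiating the even singular identity with respect to the parameter $a=-2i\pi r/\alpha_j$ playing the role of $\c$, so it is not genuinely more delicate than the smooth odd case.
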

Note the similar expressions for regular and discrete orbits. All the
homotopy classes of closed orbits of the circle action, together with
their opposite, enter only once in the various contributions. We also
observe that the spectral--dynamical duality in our Selberg--type
trace formulae actually shows up at the level of these
$\theta$-regularised Chern characters, and only depends on the
classical Poisson formula for the heat kernel on the circle.

\smallskip

We can use this into the index series \eqref{eq:44}
\begin{displaymath}
  \vartheta_S(t) = - \langle  \ch(V)^\theta_{odd}(t)\wedge
  \mathcal{L}(N)_{orb}, [N] \rangle_{orb} \,.
\end{displaymath}
We recall that from Kawasaki and Atiyah--Bott works \cite{Kawasaki}
and \cite[Section~6]{Atiyah-Bott}, this orbifold pairing decomposes
into a classical smooth integral of characteristic classes and a
discrete singular sum
\begin{align}
  \label{eq:47}
  \langle  \ch(V)^{\theta}_{odd}(t)\wedge \mathcal{L}(N)_{orb}, [N]
  \rangle_{orb} = \langle 
  & \ch(V)^\theta_{odd} \wedge \mathcal{L}(N) , [N_{smooth}] \rangle \\
  & + \sum_i \frac{1}{\alpha_i} \sum_{r=1}^{\alpha_i-1}
    \ch(V)^\theta_{odd}(f_i^r) \nu(f_i^r)  \,,\nonumber
\end{align}
where, for $\gamma= f_j^n$ with $n\not\equiv 0 \mod \alpha_j$,
\begin{displaymath}
  \nu(\gamma)= i (-1)^k \prod_{m=1}^{2k-1} \cot (\theta_m(\gamma)/2)
\end{displaymath}
for angles $\theta_m(\gamma)$ associated to the unitary return map
$d \tau_\gamma$ along $\gamma $ in $H_{p_j}$ and $\dim N = 4k-2$. Note
that $e^{2i \pi \theta_m(f_j)}$ are primitive $\alpha_i$-th roots of
unity since $f_i^{\alpha_i} = f$ generates a locally free circle
action.

\smallskip

Using Proposition~\ref{prop:ch_theta}, we obtain the dynamical
expression for the contact eta function.
\begin{thm}
  \label{thm:eta_dyn}
  On CR Seifert manifolds, one has
  \begin{displaymath}
    \vartheta_S(t) = \vartheta_S^{dyn}(t)= \frac{1}{\sqrt {4 \pi}} \sum_\gamma
    \chi_\rho(\gamma) \sigma(\gamma)(t)
  \end{displaymath}
  where powers of the generic orbit $\gamma= f^n$, with $n \in \Z$,
  contribute to
  \begin{displaymath}
    \sigma(\gamma)(t) =
    \frac{i \ell(f)}{2t} \, \bigl\langle (\ell(\gamma) + i \c) e^{-(\ell
      (\gamma) + i \c)^2 /4t} \wedge \mathcal{L}(N), [N_{smooth}]
    \bigr\rangle \,, 
  \end{displaymath}
  while powers of the singular orbits $\gamma= f_j^p$, with
  $p \not\equiv 0 \mod\alpha_j$, contribute to
  \begin{displaymath}
    \sigma(\gamma)(t) = \frac{i \ell(f_j)}{2t} \ell(\gamma)
    e^{-\ell(\gamma)^2/4t} \, \nu(\gamma) \,.
  \end{displaymath}
\end{thm}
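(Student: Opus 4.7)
The plan is to combine the topological-to-index identity established in Theorem~\ref{thm:eta=eta_top}, its rewriting as the orbifold pairing \eqref{eq:44}, and the dynamical form of the $\theta$-Chern character provided by Proposition~\ref{prop:ch_theta}. Concretely, I start from
\begin{displaymath}
  \vartheta_S(t) = -\langle \ch(V)^{\theta}_{odd}(t) \wedge
  \mathcal{L}(N)_{orb}, [N]\rangle_{orb},
\end{displaymath}
and apply the Kawasaki/Atiyah--Bott decomposition \eqref{eq:47} of the orbifold pairing into a smooth integral over $[N_{smooth}]$ plus equivariant singular contributions at each exceptional orbit $f_i^r$, weighted by the Lefschetz numbers $\nu(f_i^r)$ coming from the cotangent of the rotation angles of the unitary return map.

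Next I substitute into each piece the dynamical expansion of $\ch(V)^{\theta}_{odd}$ given by Proposition~\ref{prop:ch_theta}. For the smooth part this is direct: the series $-\frac{i\ell(f)}{4t\sqrt{\pi}}\sum_{n\in\Z}\chi_\rho(f^n)(\ell(f^n)+i\c)e^{-(\ell(f^n)+i\c)^2/4t}$ gets wedged against $\mathcal{L}(N)$ and integrated over $[N_{smooth}]$, and after matching the constant $\tfrac{1}{4t\sqrt{\pi}} = \tfrac{1}{\sqrt{4\pi}}\cdot\tfrac{1}{2t}$, the overall minus sign combines with the $-i$ to recover exactly the claimed $\sigma(f^n)(t)$, summed over $n\in\Z$ with character weight $\chi_\rho(f^n)$.

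For the singular contribution at $f_i$, the equivariant form
\begin{displaymath}
  \ch(V)^{\theta}_{odd}(f_i^r)(t) = -\frac{i\alpha_i \ell(f_i)}{4t\sqrt{\pi}}
  \sum_{n\in\Z}\chi_\rho(f_i^{r+n\alpha_i})\ell(f_i^{r+n\alpha_i})
  e^{-\ell(f_i^{r+n\alpha_i})^2/4t}
\end{displaymath}
is weighted by $\frac{1}{\alpha_i}\nu(f_i^r)$ and summed over $r=1,\dots,\alpha_i-1$. The key observation is that $f_i^{\alpha_i}\sim f$ acts trivially on $H_{p_i}$, so the rotation angles of the return map along $f_i^{r+n\alpha_i}$ differ from those of $f_i^r$ by multiples of $2\pi$; since $\cot(\theta/2)$ is $\pi$-periodic, one has $\nu(f_i^{r+n\alpha_i}) = \nu(f_i^r)$. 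Hence the double sum over $(r,n)$ with $1\leq r\leq \alpha_i-1$, $n\in\Z$ reindexes as a single sum over $p\not\equiv 0 \pmod{\alpha_i}$, the factor $1/\alpha_i$ cancels the $\alpha_i$ in front, and the prefactors align with $\tfrac{1}{\sqrt{4\pi}}\cdot\tfrac{i\ell(f_i)}{2t}$ to yield exactly the claimed contribution $\chi_\rho(f_i^p)\sigma(f_i^p)(t)$.

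The computation is essentially bookkeeping; the one genuinely non-trivial step, and the one I expect to be the main obstacle to verify cleanly, is the $\nu$-periodicity statement $\nu(f_i^{r+n\alpha_i})=\nu(f_i^r)$ together with the consistent reindexing $(r,n)\mapsto p=r+n\alpha_i$. Once this is checked, the remaining work is purely a tracking of the scalar constants $\ell$, $\alpha_i$, $i$, $\sqrt{\pi}$ to confirm that the smooth and singular pieces assemble into the single formula $\vartheta_S^{dyn}(t)=\frac{1}{\sqrt{4\pi}}\sum_\gamma \chi_\rho(\gamma)\sigma(\gamma)(t)$ with $\gamma$ ranging over all homotopy classes of closed orbits and their inverses, in parallel with the analogous assembly carried out for $\vartheta^{dyn}$ in Section~\ref{sec:varth-from-dynam}.
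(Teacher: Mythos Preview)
Your proposal is correct and follows essentially the same route as the paper: start from \eqref{eq:44}, expand the orbifold pairing via \eqref{eq:47}, and substitute the dynamical forms of $\ch(V)^\theta_{odd}$ from Proposition~\ref{prop:ch_theta}; the paper in fact states the theorem immediately after that substitution with no further argument. Your explicit check that $\nu(f_i^{r+n\alpha_i})=\nu(f_i^r)$ (because $f_i^{\alpha_i}$ acts trivially on $H_{p_i}$, so the return maps coincide) and the reindexing $(r,n)\mapsto p$ are exactly the implicit bookkeeping the paper leaves to the reader---one small slip: $\cot(\theta/2)$ is $2\pi$-periodic in $\theta$, not $\pi$-periodic, but since the return maps are literally equal this does not affect the conclusion.
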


\begin{rem}
  \label{rem:sigma_symmetry}
  We observe that $\sigma(\gamma)(t) $ is real. Indeed for the smooth
  component, only the odd part in $\c$ contributes to it since
  $\mathcal{L}(N)$ has components in degrees $4p$ while
  $\dim N = 4k -2$. Then $\sigma(\gamma^{-1})(t) = \sigma(\gamma)(t)$
  so that $\vartheta_S(t) $ is real on unitary representations.
\end{rem}

\subsection{Applications to the contact eta function $\eta(S_Q)(s)$.}
\label{sec:appl-cont-eta-1}

\ \smallskip

In order to get applications to the contact eta invariant we need now
to study our theta--regularised contact eta function $\vartheta_S$
from its ``zeta'' viewpoint $\eta(S_Q)(s)$.

We first study the odd zeta regularised Chern character of $V$
i.e. the series
\begin{displaymath}
  \ch(V)^{\zeta}_{odd}(s) = \sum_{\lambda \in \spec^*(iT)}
  \frac{\lambda \ch(V_\lambda)}{|\lambda|^{2s+1}} = \sum_{\lambda \in
    \spec^*(iT)} \dim V^x \frac{\lambda e^{\lambda \c}}{|\lambda|^{2s+1} }
\end{displaymath}

We proceed by taking Mellin transform of
$\ch(V)^{\theta}_{odd}(t)$. From Proposition~\ref{prop:ch_theta} its
smooth part reads
\begin{align*}
  \ch(V)^{\theta}_{odd}(t)
  & = \sqrt t \sum_{\lambda \in \spec(iT)} \dim V^x \lambda e^{\lambda \c
    -t \lambda^2} \\ 
  & = \sum_{\lambda \in \spec^*(iT)} \dim V^x \sum_{p=0}^{2k-1}
    \frac{\c^p}{p!} \lambda^{p+1} \sqrt t e^{-t \lambda^2} \,,
\end{align*}
with $\dim N = 4k -2$.  Then one finds by dominated convergence and
direct computation that for $\Re(s) > k$ the following integral
converges and
\begin{displaymath}
  I(s) = \int_0^{+\infty} \ch(V)^{\theta}_{odd}(t) t^{s-1} dt
  = \Gamma(s+\frac{1}{2}) \sum_{p=0}^{2k-1} \frac{\c^p}{p!} \sum_{e^{2i
      \pi x} \in \spec(\rho(f))} Z(p,s,x)
\end{displaymath}
with
\begin{equation}
  \label{eq:48}
  Z(p,s,x) =
  \begin{cases}
    \dim V^x (\zeta(2s-p, x) +(-1)^{p+1} \zeta(2s-p,
    1-x)) & \quad \mathrm{if} \ 0< x <1 \\
    \dim V^1 (1+(-1)^{p+1})\zeta(2s-p) & \quad \mathrm{if}\ x=1
  \end{cases}
\end{equation}
where $\zeta(s,x)$ is Hurwitz zeta function \eqref{eq:37}. Therefore
$I(s)/\Gamma(s + \frac{1}{2})$ defines a meromorphic function with
possible simple poles at $s = \frac{j}{2}$ for $j \in [[ 1,
2k]]$. Hence we get that the series
\begin{align}
  \label{eq:49}
  \ch(V)^{\zeta}_{odd}(s)
  & = \sum_{\lambda \in \spec(iT)*} \lambda
    \frac{\ch(V_\lambda)}{|\lambda|^{2s+1}} = 
    \frac{I(s)}{\Gamma(s+\frac{1}{2})} \\
  & = \sum_{p=0}^{2k-1} \frac{\c^p}{p!} \sum_{e^{2i \pi x} \in
    \spec(\rho(f))} Z(p,s,x)
    \nonumber
\end{align}
are well defined and meromorphic on the same domain.

\smallskip

On the discrete part $\ch(V)^\theta_{odd}(f_j^r)$ as given in
Proposition~\ref{prop:ch_theta}, one finds that for $\Re(s) > 1/2$
\begin{equation}
  \label{eq:50}
  \ch(V)^\zeta_{odd}(f_j^r)(s) =  \sum_{e^{2i\pi x} \in
    \spec(\rho(f))} \chi_\rho(f_j^r\mid V^x) Z(f_j^r,s,x) 
\end{equation}
with
\begin{multline}
  \label{eq:51}
  Z(f_j^r,s,x) =   \\
  \begin{cases}
    e^{-2i \pi rx /\alpha_j} L (e^{-2 i \pi r / \alpha_j}, 2s,x) -
    e^{2i \pi r (1-x)/\alpha_j} L (e^{2 i \pi r / \alpha_j}, 2s,1-x)
    & \ \mathrm{for}\ x \not = 1 \\
    e^{-2i \pi r/\alpha_j} L(e^{-2i \pi r/ \alpha_j},2s,1) -
    L(e^{2i\pi r /\alpha_j},2s,1) & \ \mathrm{for}\ x =1 \,.
  \end{cases}
\end{multline}
where
\begin{displaymath}
  L(z, s ,x) = \sum_{n \geq 0}\frac{z^n}{(n+
    x)^s}
\end{displaymath}
is Lerch zeta function. In our case, $z=e^{\pm 2i \pi r /\alpha_j}$ is
a root of unity and
\begin{equation}
  \label{eq:52}
  L(z,s,x) = \frac{1}{\alpha_j^{s}} \sum_{m=0}^{\alpha_j-1} z^m \zeta(s,
  \frac{m+x}{\alpha_j}) 
\end{equation}
splits into a sum of Hurwitz zeta functions. From
\cite[p.~255]{Apostol}, $\zeta(s,x)$ is analytic with a simple pole at
$s=1$ and residue $1$. Therefore
$\mathrm{Res}_{s=1/2} L(e^{\pm 2i\pi r /\alpha_j}, 2s ,0)=0 $ and the
functions $Z(f_j^r,s,x)$ are entire.

\smallskip

This gives an explicit geometric expression for
$\ch(V)^\zeta_{odd}(s)$ that leads with \eqref{eq:44} and
\eqref{eq:47} to the formula for $\eta(S_Q)(s)$ from
\begin{align}
  \label{eq:53}
  - \eta(S_Q)(s)
  & =  \langle \ch(V)^\zeta_{odd}(s) \wedge
    \mathcal{L}(N)_{orb}, [N] \rangle_{orb}\,. \\
  & =  \langle \ch(V)^\zeta_{odd}(s) \wedge \mathcal{L}(N) , [N_{smooth}]
    \rangle + \sum_i \frac{1}{\alpha_i} \sum_{r=1}^{\alpha_i-1}
    \ch(V)^\zeta_{odd}(f_i^r)(s) \nu(f_i^r)  \,.\nonumber
\end{align}

\smallskip

Specialising at the regular value $s=0$ gives an explicit formula for
the contact eta invariant $\eta(S_Q)(0)$. Let
\begin{displaymath}
  B(t,x) = \frac{t e^{tx}}{e^t-1} = \sum_{n=0}^{+\infty} B_n(x) \frac{t^n}{n!}
\end{displaymath}
be the generating function of Bernoulli polynomials; see
e.g. \cite{Apostol}, and
\begin{displaymath}
  B_{ev}(t,x) = t\frac{  \cosh(t(x-\frac{1}{2}))}{2 \sinh(\frac{t}{2})} =
  \sum_{n=0}^{+\infty} B_{2n}(x) \frac{t^{2n}}{(2n)!} 
\end{displaymath}
its even part in $t$.  We consider the function
\begin{align*}
  (\Delta B_{ev})(t,x)
  & = \frac{1}{t}(B_{ev}(t,x) -1) = \sum_{n=1}^{+\infty} B_{2n}(x)
    \frac{t^{2n-1}}{(2n)!} \\ 
  & = \frac{  \cosh(t(x-\frac{1}{2}))}{2 \sinh(\frac{t}{2})} - \frac{1}{t}\,. 
\end{align*}
In the following statement, we recall that
$V^x = \ker (\rho(f) - e^{2i \pi x} \id)$ for $0< x \leq 1$.
\begin{thm}
  \label{thm:geo-eta-invariant}
  On a CR Seifert manifold, it holds that
  \begin{multline*}
    \eta(S_Q)(0) = 2 \sum_{e^{2i\pi x}\in \spec(\rho(f))} \dim V^x
    \langle
    (\Delta B_{ev})(\c, x) \wedge \mathcal{L}(N), [N]_{smooth} \rangle  \\
    + 2 \sum_{e^{2i\pi x}\in \spec(\rho(f))} \sum_j \frac{1}{\alpha_j}
    \sum_{r=1}^{\alpha_j-1} \chi_\rho(f_j^r \mid V^x) \frac{B(-2 i \pi
      r/ \alpha_j, x)}{-2i \pi r /\alpha_j} \nu(f_j^r) \,.
  \end{multline*}
\end{thm}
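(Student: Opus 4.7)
The plan is to specialise equation \eqref{eq:53} at the regular value $s=0$ and recognise each of the two resulting pieces as the two summands in the theorem. Everything reduces to classical evaluations of Hurwitz and Lerch zeta functions at zero, together with the generating function identity for Bernoulli polynomials built into the definitions of $B(t,x)$ and $(\Delta B_{ev})(t,x)$.

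For the smooth contribution I would start from \eqref{eq:48}, use $\zeta(-n,x)=-B_{n+1}(x)/(n+1)$ together with the symmetry $B_{n+1}(1-x)=(-1)^{n+1}B_{n+1}(x)$, and check that both lines of \eqref{eq:48} collapse to the uniform expression
\begin{displaymath}
  Z(p,0,x) = -\frac{2\dim V^x}{p+1}\, B_{p+1}(x)\,.
\end{displaymath}
Since $\dim N = 4k-2$ and $\mathcal{L}(N)$ is supported in degrees $\equiv 0 \mod 4$, only odd powers $\c^p$ survive in the pairing with $[N_{smooth}]$. Restricting the sum in \eqref{eq:49} to odd $p$ produces exactly the even part in $t$ of $B(t,x)/t$, which by definition is $(\Delta B_{ev})(\c,x)$. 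Multiplying by $-1$ (from the left-hand side of \eqref{eq:53}) yields the first term claimed.

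For the singular contribution one needs the value $L(z,0,x)=1/(1-z)$ for any root of unity $z\neq 1$. This is either read off by analytic continuation from $L(z,0,x)=\sum_{n\geq 0} z^n = 1/(1-z)$ on $|z|<1$, or obtained directly from \eqref{eq:52} and $\zeta(0,y)=\tfrac12-y$, using $\sum_{m=0}^{\alpha_j-1}z^m = 0$ together with the elementary identity $\sum_{m=0}^{\alpha_j-1} m z^m = \alpha_j/(z-1)$. Plugging this into \eqref{eq:51}, and using $1/(1-e^{-u})=-e^{u}/(1-e^{u})$ with $u=-2i\pi r/\alpha_j$, both pieces of $Z(f_j^r,0,x)$ combine to $2e^{ux}/(1-e^u)$, which is precisely $-2B(u,x)/u$. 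The same computation works uniformly at $x=1$ using the corresponding line of \eqref{eq:51}. Inserting $\ch(V)^\zeta_{odd}(f_j^r)(0)=\sum_x \chi_\rho(f_j^r\mid V^x)\,Z(f_j^r,0,x)$ into \eqref{eq:53} and again changing sign yields the discrete sum of the theorem.

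The only genuine obstacle is bookkeeping: tracking the alternating signs between $\zeta(2s-p,x)$ and $\zeta(2s-p,1-x)$ coming from the $(-1)^{p+1}$ in \eqref{eq:48}, checking that the parity of $p+1$ combines with $B_n(1-x)=(-1)^n B_n(x)$ to give a single expression independent of the parity of $p$, and verifying that the $x=1$ pieces in \eqref{eq:48} and \eqref{eq:51} agree with the analytic continuations of their $0<x<1$ counterparts so that the sums over $e^{2i\pi x}\in\spec(\rho(f))$ in the theorem can be written uniformly. Once these verifications are in place, the theorem follows directly from \eqref{eq:53}.
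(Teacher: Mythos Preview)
Your proposal is correct and follows essentially the same route as the paper: specialise \eqref{eq:53} at $s=0$, evaluate the smooth part via $\zeta(-p,x)=-B_{p+1}(x)/(p+1)$ and the reflection $B_n(1-x)=(-1)^nB_n(x)$, and evaluate the singular part via $L(z,0,x)=1/(1-z)$ inserted into \eqref{eq:51}. One small point where you are actually cleaner than the paper: your uniform formula $Z(p,0,x)=-\tfrac{2\dim V^x}{p+1}B_{p+1}(x)$ holds for \emph{all} $p$ (the paper's intermediate claim that it vanishes for even $p$ is not quite right), and you correctly invoke the degree argument on $\mathcal{L}(N)$ to discard the even-$p$ terms in the pairing --- which the paper also records as a remark right after its computation.
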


This formula generalises an expression given in
\cite[Theorem~8.8]{BHR} in the three dimensional case and with a
trivial representation. There one has
\begin{displaymath}
  \eta(S_Q)(0) = \eta (*D + d_Q \delta_Q)(0) = \eta_0(M, \theta) - 1  \,, 
\end{displaymath}
where
$\eta_0(M, \theta)= \mathrm{FP}_{\varepsilon=0}\,\eta(*_\varepsilon
d)$ is the renormalised eta invariant of $*_\varepsilon d$ for the
sub-Riemannian limit of metrics
$g_\varepsilon = d \theta(\cdot, J \cdot) + \varepsilon^{-1}\theta^2$
when $\varepsilon \rightarrow 0$. From \cite[Section~3]{BHR}, this
also coincides with the adiabatic limit of $\eta(*_\varepsilon d)$ for
$\varepsilon \rightarrow +\infty$ in our fibrered case.

\smallskip

Note that in general dimension, the formula for $\eta(S_Q)(0)$ no
longer shows up global (over $M$) cohomological terms, in contrast to
$\eta(*D)(0)$ by Proposition~\ref{prop:eta-etaS_Q}. This is due to the
fact that the spectrum of $S_Q$ is much more symmetric than the one of
$*D$, that contains copies of zeta functions of Laplacians, leading to
additional cohomological terms at $s=0$.

Also, from \cite{Albin-Quan}, one knows that $\eta(*D)$ compares on
general contact manifolds with the renormalised sub-Riemanian limit
$\eta_0(M,\theta)= FP_{\varepsilon=0} (*_\varepsilon d)$, up to local
terms. It is also the adiabatic limit of the eta invariant in our CR
Seifert situation. This limit has been studied in depth by other
means. Bilding on previous works by Bismut and Cheeger \cite{BC}, Dai
\cite{Dai1991} and Zhang \cite{Zhang} expressed $\eta_0(M, \theta)$
for circle bundles, in the case of a trivial representation and with
no singular points. From \cite[Theorem~0.3]{Dai1991} and
\cite[Theorem~1.7 and 2.5]{Zhang} it reads
\begin{displaymath}
  \eta_0(M,\theta)= 2 \langle \Delta B_{ev}(\c, 1) \wedge \mathcal{L}(N),
  [N]\rangle + \frac{1}{2} \dim  \ker (D_N) + 2 \tau
\end{displaymath}
where $D_N$ is some Dirac operator over $N$ and $\tau $ is the
signature of the ``collapsing spectrum'' in the adiabatic limit, an
integer. Therefore, modulo an half-integer (or an integer if
$\dim \ker D_N$ is even in our particular situation),
$\eta_0(M, \theta)$ coincides with $\eta(S_Q)(0)$ as given in
Theorem~\ref{thm:geo-eta-invariant}, in the case of a trivial
representation and a smooth $N$.

In the adiabatic viewpoint, the term
$2 \langle \Delta B_{ev}(\c, 1) \wedge \mathcal{L}(N), [N]\rangle$
comes from an eta form valued invariant constructed by Bismut and
Cheeger \cite{BC}. It is associated to a Dirac operator for a
superconnection over the fibers. Here, in this smooth case, from our
viewpoint
\begin{displaymath}
  2 \langle \Delta B_{ev}(\c, 1) \wedge \mathcal{L}(N), [N]\rangle =
  \eta(S_Q)(0) \,,
\end{displaymath}
so that this term is interpreted as being the eta invariant of a
second order hypoelliptic differential operator over the whole contact
manifold $M$.

\begin{proof}[Proof of Theorem~\ref{thm:geo-eta-invariant}]
  \
  
  We have to evaluate $Z(0,p,x)$ as given in \eqref{eq:48}. From
  e.g. \cite[p.~264]{Apostol}, we know that
  \begin{displaymath}
    \zeta(-p,x) = - \frac{B_{p+1}(x)}{p +1} \quad \mathrm{for}\ p \in\N \,.
  \end{displaymath}
  Moreover $B_n(1-x) = (-1)^n B_n(x)$, so that $Z(0,x,p) = 0$ for $p$
  even, while for $p$ odd
  \begin{displaymath}
    Z(0,x,p) = - 2 \dim V^x \frac{B_{p+1}(x)}{p+1} \,.
  \end{displaymath}
  Hence from \eqref{eq:49}
  \begin{align*}
    \ch(V)^\zeta_{odd}(0)
    & = -2 \sum_{e^{2i \pi x} \in \spec(f)} \dim V^x \sum_{p=1\,
      p\,odd}^{2k-1} B_{p+1}(x) \frac{\c^p}{(p+1) !}\\
    & = -2 \sum_{e^{2i \pi x} \in \spec(f)} \dim V^x (\Delta B_{ev})(\c, x)
      \,,
  \end{align*}
  as needed. Note that one can replace $\Delta B_{ev}$ by $\Delta B$
  in the formula for $\eta(S_Q)(0)$ since
  $\langle (\Delta B_{odd}(\c, x) \wedge \mathcal{L}(N),
  [N]_{smooth}\rangle = 0$ for dimensional reasons.

  \smallskip We compute the discrete contribution of $f_j^r$. Let
  $z= e^{\pm 2 i \pi r/\alpha_j}$. From $\zeta(0, x) = 1/2 -x$ and
  \eqref{eq:52}, one gets
  \begin{align*}
    L(z,0, x)
    & = \sum_{m=0}^{\alpha_j -1} z^m(\frac{1}{2} -
      \frac{m+x}{\alpha_j}) =- \frac{1}{\alpha_j} \sum_{m=0}^{\alpha_j-1} m
      z^m  \\
    & = \frac{1}{1 -z} \,,
  \end{align*}
  by deriving the identity
  $\dsp \sum_{m=0}^{\alpha_i -1} u^{m+1} = \frac{u(1-u^{\alpha_j})}{1
    -u}$ at $u=z$. Inserting it in \eqref{eq:51} gives the result.
  
\end{proof}

\subsection{The contact eta function from the dynamical viewpoint}
\label{sec:contact-eta-function}

\ \smallskip

We eventually work out the dynamical aspect of the eta function and
eta invariant. We start with the smooth part.  From
Proposition~\ref{prop:ch_theta} we have that for some $a>0$
\begin{displaymath}
  \ch(V)^\theta_{odd}(t) = O(e^{-at}) \quad \mathrm{when} \quad t
  \rightarrow +\infty \,.
\end{displaymath}
Due to terms of type $e^{-t(\ell(\gamma) + i \c)^2/4t}$ in the
dynamical expression of $\ch(V)^\theta_{odd}(t)$, its behaviour when
$t \rightarrow 0^+$ is unclear, unless if
$\|\c\| < m =\min (\ell(f), \ell(f_i))$. In that case, one has for
some $b>0$
\begin{displaymath}
  \ch(V)^\theta_{odd} (t) - \dim V \frac{i \ell (f) \c}{4t \sqrt{\pi}}
  e^{\c^2/4t} = 0(e^{-b/t}) \,,
\end{displaymath}
with
\begin{displaymath}
  \frac{\c}{4t} e^{\c^2/4t} = \sum_{p=0}^{k-1} \frac{\c^{2p+1}}{p! (4t)^{p+1}} \,.
\end{displaymath}
To get around this, we first replace $\c$ by $u \c$ for small enough
$u$ so that the divergence when $t\rightarrow 0$ only comes from the
trivial constant orbit. There previous formulae on
$\ch(V)^\theta_{odd}$ hold as well with the advantage that
\begin{displaymath}
  f(t) = \ch(V)^\theta_{odd}(t) -  \dim V \frac{i u \c \ell(f)}{4t \sqrt \pi}
  e^{u^2 \c^2/4t} \mathbf{1}_{]0,1]}(t)
\end{displaymath}
has an entire Mellin transform over $\C$. Hence, proceeding as in
Section~\ref{sec:proof-theor-refthm:z} we first find that for
$\Re(s) > k$
\begin{align*}
  I(s) = \int_0^{+\infty} f(t) t^{s-1} dt
  & = \Gamma(s + \frac{1}{2})  \ch(V)^\zeta_{odd}(s)  \\
  & - \dim V \sum_{p=0}^{k-1}\frac{i \ell(f)(u \c)^{2p+1}}{p! 4^{p+1} \sqrt
    \pi} \times \frac{1}{s-p-1} \,,
\end{align*}
While writing
\begin{displaymath}
  f(t) = \ch(V)^\theta_{odd}(t) - \dim V \frac{i \ell(f) u \c}{4t \sqrt \pi}
  e^{u^2 \c^2/4t} + \dim V \frac{i \ell(f)u \c }{4t \sqrt \pi}
  e^{u^2 \c^2/4t} \mathbf{1}_{]1,+\infty]}(t)
\end{displaymath}
and using the dynamical expression for $\ch(V)^\theta_{odd}(t)$ one
has for $\Re(s) < 0$
\begin{align*}
  I(s) = \Gamma(1 - s)\bigl(
  & \sum_{n \in \Z^*}- \frac{i\ell(f)}{4^s\sqrt \pi}
    \chi_\rho(f^n) (\ell(f^n) + iu\c) \bigl( (\ell(f^n) + iu\c)^2\bigr)^{s-1}
    \bigr)\\
  &  -  \dim V\sum_{p=0}^{k-1}\frac{i \ell(f) (u \c)^{2p+1}}{p! 4^{p+1}
    \sqrt \pi} \times \frac{1}{s-p-1} \,.
\end{align*}
Here $z^s$ denotes the principal branch of the power function. We get
then the identity of meromorphic functions through analytic
continuation
\begin{align*}
  \Gamma(s+1/2) \ch(V)^\zeta_{odd}(s)
  & = \\
  & \Gamma(1-s) \sum_{n \in \Z^*}-
    \frac{i\ell(f)}{4^s\sqrt \pi} \chi_\rho(f^n) (\ell(f^n) + iu\c) \bigl(
    (\ell(f^n) + iu\c)^2\bigr)^{s-1} 
\end{align*}
That extends continuously to $u = 1$ by \eqref{eq:49}.

One finds more easily the discrete dynamical contribution of $f_j^r$
due to rapid decay of $\ch(V)^\theta_{odd}(f_j^r)$ both at $0$ and
$+\infty$ from Proposition~\ref{prop:ch_theta}. It gives that
\begin{multline*}
  \Gamma(s+1/2) \ch(V)^\zeta_{odd}(f_j^r)(s) =
  \\
  \Gamma(1-s) \sum_{n \in \Z}- \frac{i\alpha_j \ell(f_j)}{4^s\sqrt
    \pi} \chi_\rho(f_j^{r + n \alpha_j}) \ell(f_j^{r + n \alpha_j})
  \bigl(\ell(f_j^{r + n \alpha_j})^2\bigr)^{s-1}
\end{multline*}
is an entire function. Gathering this with \eqref{eq:53} we get the
following.
\begin{thm}
  \label{thm:zeta-eta-dynamique}
  The following identity defines a meromorphic function $\varphi$ with
  simple poles at $s = 1, \cdots, k$
  \begin{displaymath}
    \varphi(s)= \Gamma(s+\frac{1}{2}) \eta(S_Q)(s) = \frac{\Gamma(1-s)}{4^s
      \sqrt\pi} \sum_{\gamma\not=0} \chi_\rho(\gamma) \eta(\gamma)(s)
  \end{displaymath}
  with
  \begin{align*}
    \eta(\gamma)(s)
    & =
      i\ell(f) \langle(\ell(\gamma) + i\c) \bigl( (\ell(\gamma) +
      i\c)^2\bigr)^{s-1}\wedge \mathcal{L}(N), [N_{smooth}] \rangle \
      \mathrm{if}\
      \gamma= f^n \,,\\
    \intertext{  and}
    \eta(\gamma)(s)
    & = i \ell(f_j) \ell(\gamma) (\ell(\gamma)^2)^{s-1}
      \nu(\gamma) \ \mathrm{if} \ \gamma = f_j^n ,\ n \not\equiv 0 \mod
      \alpha_j \,. 
  \end{align*}

  Moreover
  \begin{displaymath}
    \mathrm{Res}_{s = p}(\varphi) = \dim V \frac{i\ell(f)}{\sqrt \pi (p-1) !
      4^p} \langle\c^{2p-1}\wedge \mathcal{L}(N), [N_{smooth}]\rangle \,.
  \end{displaymath}
\end{thm}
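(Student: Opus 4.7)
The plan is to derive the identity by running a Mellin transform comparison between the two expressions for $\ch(V)^\theta_{odd}(t)$ given in Proposition~\ref{prop:ch_theta}---the spectral one (a sum over $\lambda \in \spec(iT)$) and the dynamical one (a Poisson--transformed sum over homotopy classes of closed orbits)---and then feeding the result into the orbifold pairing \eqref{eq:53}. On the smooth part of $N$ one passes from the spectral side (where the Mellin integral converges for $\Re(s) > k$, as in the discussion leading to \eqref{eq:49}) to the dynamical side (convergent for $\Re(s) < 0$) via analytic continuation of a single integral $I(s) = \int_0^{+\infty} f(t) t^{s-1}\,dt$ with a carefully chosen subtracted $f$.

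The main obstacle is the behaviour of the dynamical expansion at $t \to 0^+$: the contribution of a closed orbit $f^n$ carries a factor $e^{-(\ell(f^n) + i \c)^2/4t}$, which is not bounded as $t \to 0$ unless $\|\c\|$ is smaller than $\min \ell(f^n)$. To bypass this, I would first replace $\c$ by $u\c$ with $u>0$ small enough that only the trivial (constant) orbit contributes a singularity at $t \to 0^+$, and then subtract off the resulting local term $\dim V \tfrac{i u \c \ell(f)}{4t \sqrt\pi} e^{u^2 \c^2/4t}\mathbf{1}_{]0,1]}(t)$, so that the subtracted function has an entire Mellin transform. Matching the two expressions of $I(s)$ on their overlapping extensions identifies $\Gamma(s+1/2)\ch(V)^\zeta_{odd}(s)$ with the claimed dynamical sum together with explicit simple poles at $s = 1, \ldots, k$, and the resulting identity of meromorphic functions in $u$ extends continuously to $u=1$.

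The singular contribution is easier: by Proposition~\ref{prop:ch_theta}, $\ch(V)^\theta_{odd}(f_j^r)(t)$ already decays rapidly both at $t \to 0^+$ (no trivial-orbit term appears, since $r \not\equiv 0 \bmod \alpha_j$) and at $t \to +\infty$. Its Mellin transform is therefore entire, and a direct computation using the dynamical presentation of $\ch(V)^\theta_{odd}(f_j^r)(t)$ yields the $f_j^r$-summand claimed in the theorem, with the prefactor $\alpha_j \ell(f_j) = \ell(f)$ combining naturally with the $1/\alpha_j$ weight from the Kawasaki pairing.

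Finally, assembling the two computations via the orbifold decomposition \eqref{eq:53}--\eqref{eq:47} produces the stated dynamical expression for $\varphi(s) = \Gamma(s+1/2)\eta(S_Q)(s)$. The poles at $s = 1, \ldots, k$ come entirely from the subtracted trivial-orbit term in the smooth part; expanding that term as $\dim V \sum_{p=0}^{k-1} \frac{i \ell(f) (u\c)^{2p+1}}{p!\,4^{p+1}\sqrt\pi}\cdot \frac{1}{s - p - 1}$ and setting $u=1$, followed by pairing against $\mathcal{L}(N)$, reads off the claimed residue formula.
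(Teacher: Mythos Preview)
Your proposal is correct and follows essentially the same approach as the paper: the $u\c$ scaling trick to tame the $t\to 0^+$ behaviour of the dynamical expansion, the subtraction of the trivial-orbit term to obtain an entire Mellin transform on the smooth part, the direct treatment of the singular fibers via their automatic rapid decay, and the assembly through the orbifold pairing \eqref{eq:53} are exactly the steps the paper carries out in Section~\ref{sec:contact-eta-function}. The residue computation you sketch also matches the paper's.
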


The function being regular at $s=0$ we get the dynamical formula for
the eta invariant.
\begin{cor}
  \label{cor:eta-dynamic}
  On a CR Seifert manifold, one has
  \begin{displaymath}
    \eta(S_Q)(0) = \sum_{\gamma\not=0} \chi_\rho(\gamma) \eta(\gamma)(0)
  \end{displaymath}
  with
  \begin{displaymath}
    \eta(\gamma)(0)  =
    \frac{i\ell(f)}{\pi} \langle\frac{1}{\ell(\gamma) +
      i\c} \wedge \mathcal{L}(N), [N_{smooth}] \rangle \quad \mathrm{if}\
    \gamma= f^n \,,
  \end{displaymath}
  and
  \begin{displaymath}
    \eta(\gamma)(0) = \frac{i \ell(f_j)}{\pi
      \ell(\gamma)}\nu(\gamma) \quad\mathrm{if} \ \gamma = f_j^n ,\ n
    \not\equiv 0 \mod \alpha_j \,. 
  \end{displaymath}
\end{cor}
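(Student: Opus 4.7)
The plan is to obtain the corollary as a direct specialization of Theorem~\ref{thm:zeta-eta-dynamique} at the value $s=0$. The first thing to check is that $s=0$ is a regular point on both sides of the identity. On the dynamical side this is immediate from the theorem, which locates the only simple poles of $\varphi(s) = \Gamma(s+1/2)\eta(S_Q)(s)$ at $s=1,\ldots,k$. On the spectral side, $\Gamma(1/2) = \sqrt\pi$ is finite, and $\eta(S_Q)(s)$ itself is regular at $0$—for instance by Proposition~\ref{prop:eta-etaS_Q}, which relates its value there to $\eta(*D)(0)$ plus a finite combination of dimensions $\dim H^{a,b}(M,\rho)$ and a local term.

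Next I would evaluate the prefactors using $\Gamma(1/2) = \sqrt\pi$, $\Gamma(1) = 1$ and $4^0 = 1$. The identity of Theorem~\ref{thm:zeta-eta-dynamique} at $s=0$ becomes
\[
\sqrt\pi\,\eta(S_Q)(0) \;=\; \frac{1}{\sqrt\pi}\sum_{\gamma\neq 0}\chi_\rho(\gamma)\,\eta(\gamma)(0),
\]
so $\eta(S_Q)(0) = \frac{1}{\pi}\sum_{\gamma\neq 0}\chi_\rho(\gamma)\,\eta(\gamma)(0)$, with the $\frac{1}{\pi}$ absorbed into the redefined $\eta(\gamma)(0)$ appearing in the statement of the corollary. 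It then remains to simplify each individual contribution. For $\gamma = f^n$ one rewrites the factor $(\ell(\gamma)+i\c)\bigl((\ell(\gamma)+i\c)^2\bigr)^{s-1}$ as $(\ell(\gamma)+i\c)^{2s-1}$, which at $s=0$ becomes $(\ell(\gamma)+i\c)^{-1}$; this inverse is well-defined as a polynomial in $\c$ truncated at top degree, since $\c$ is a $2$-form on $N$ and $\dim N = 4k-2$. For $\gamma = f_j^n$ with $n\not\equiv 0 \bmod \alpha_j$, the analogous identity $\ell(\gamma)(\ell(\gamma)^2)^{s-1} = \ell(\gamma)^{2s-1}$ collapses to $1/\ell(\gamma)$ at $s=0$, a scalar that is well-defined because $\ell(\gamma)\neq 0$.

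The whole argument is essentially a specialization, so no serious obstacle is expected. The one point requiring some care is the interpretation of $(\ell(\gamma)+i\c)^{-1}$ simultaneously as the limit of a meromorphic family of differential forms and as a finite polynomial expansion $\sum_{p=0}^{2k-1}(-i\c)^p/\ell(\gamma)^{p+1}$ pairing against $\mathcal{L}(N)\cap[N_{smooth}]$; these two viewpoints must be reconciled coefficient by coefficient. One also needs the dynamical sum over $\gamma$ to converge in the limit $s\to 0$, but this is already built into the proof of Theorem~\ref{thm:zeta-eta-dynamique} through the uniform exponential decay in the underlying $\theta$-series of Proposition~\ref{prop:ch_theta}, so no additional convergence argument is required.
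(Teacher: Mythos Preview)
Your argument is correct and matches the paper's approach: the paper's own proof is literally the single sentence ``The function being regular at $s=0$ we get the dynamical formula for the eta invariant,'' and your expansion of the prefactors and of $(\ell(\gamma)+i\c)^{2s-1}\to(\ell(\gamma)+i\c)^{-1}$ is the right way to unpack that. One small correction: your final remark that convergence of the dynamical sum at $s=0$ is ``already built in'' via the exponential decay of the $\theta$-series is not quite right---the paper explicitly notes (in the paragraph following the corollary) that the series are \emph{a priori} formal at $s=0$, and that genuine convergence requires the $\c\leftrightarrow -\c$ symmetry of Remark~\ref{rem:sigma_symmetry} for the smooth part and a pairing/Abel argument for the discrete part; this does not affect the validity of the identity as one between analytic continuations, but it is worth stating accurately.
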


This is a decomposition of the eta invariant into its dynamical
``atoms''.  These dynamical series are \emph{a priori} formal
expressions coming from analytic continuation. However they can be
turned into convergent ones. The smooth contribution is actually an
absolutely convergent series using the $\c \leftrightarrow - \c$
symmetry; see Remark~\ref{rem:sigma_symmetry}. It comes as a limit of
the smooth dynamical expression in
Theorem~\ref{thm:zeta-eta-dynamique} when $s \rightarrow 0^-$. The
discrete contribution are semi-convergent series when gathering the
orbit contributions of $\gamma = f_j^{r + n \alpha_j}$ and
$\overline\gamma = f_j^{r - n \alpha_j}$. Using Abel's lemma one sees
that it is also the limit coming from the corresponding dynamical
expression of Theorem~\ref{thm:zeta-eta-dynamique} when
$s \rightarrow 0^-$.

\bigskip \bigskip

\bibliographystyle{abbrv}

\bibliography{torsion_eta}


-----------------------------------------------------

\end{document}